\newtheorem{theorem}{Theorem}
\newtheorem{corollary}{Corollary}
\newtheorem{definition}{Definition}
\newtheorem{lemma}{Lemma}
\newtheorem{proposition}{Proposition}
\newtheorem{remark}{Remark}
\newenvironment{proof}[1][Proof]{\textbf{#1.} }{\ \rule{0.5em}{0.5em}}
\begin{document}

\title{Compactness property of the linearized Boltzmann collision operator for a multicomponent polyatomic gas}

\author{Niclas Bernhoff \footnote{niclas.bernhoff@kau.se}\\Department of Mathematics and Computer Science\\ Karlstad University,  65188 Karlstad, Sweden }

\maketitle

\abstract{The linearized Boltzmann collision operator is
fundamental in many studies of the Boltzmann equation and its main
properties are of substantial importance. The decomposition into a sum of a
positive multiplication operator, the collision frequency, and an integral
operator is trivial. Compactness of the integral operator for monatomic
single species is a classical result, while corresponding results for
monatomic mixtures and polyatomic single species are more recently obtained.
This work concerns the compactness of the operator for a multicomponent
mixture of polyatomic species, where the polyatomicity is modeled by a
discrete internal energy variable. With a probabilistic formulation of the
collision operator as a starting point, compactness is obtained by proving
that the integral operator is a sum of Hilbert-Schmidt integral operators
and operators, which are uniform limits of Hilbert-Schmidt integral
operators, under some assumptions on the collision kernel. The assumptions
are essentially generalizations of the Grad's assumptions for monatomic
single species. Self-adjointness of the linearized collision operator
follows. Moreover, bounds on - including coercivity of - the collision
frequency are obtained for a hard sphere like model. Then it follows that
the linearized collision operator is a Fredholm operator, and its domain is
also obtained.}

\textbf{Keywords:} Boltzmann equation, Multicomponent mixture, Polyatomic gases, Linearized collision operator, Hilbert-Schmidt integral operator

%%\pacs[JEL Classification]{D8, H51}

\textbf{MSC Classification:} 82C40, 35Q20, 35Q70, 76P05, 47G10

\section{Introduction\label{S1}}

The Boltzmann equation is a fundamental equation of kinetic theory of gases,
e.g., for computations of the flow around a space shuttle in the upper
atmosphere during reentry \cite{BBBD-18}. Studies of the main properties of
the linearized collision operator are of great importance in gaining
increased knowledge about related problems, see, e.g., \cite{Cercignani-88}
and references therein, and for related half-space problems\ \cite{BGS-06,
Be-21, Be-22, BG-21}. The linearized collision operator is obtained, by
considering deviations of an equilibrium, or Maxwellian, distribution. It
can in a natural way be written as a sum of a positive multiplication
operator, the collision frequency, and an integral operator $-K$. Compact
properties of the integral operator $K$ (for angular cut-off kernels) are
extensively studied for monatomic single species, see, e.g., \cite{Gr-63,
Dr-75, Cercignani-88, LS-10}, and more recently for monatomic
multi-component mixtures \cite{BGPS-13, Be-21a} and polyatomic single
species, where the polyatomicity is modeled by either a discrete or a
continuous internal energy variable \cite{Be-21a,Be-21b}. See also \cite%
{BBS-22} for the case of molecules undergoing resonant collisions, i.e.,
collisions where internal energy is transferred to internal energy, and
correspondingly, translational energy to translational energy, during the
collisions. The integral operator can be written as the sum of a
Hilbert-Schmidt integral operator and an approximately Hilbert-Schmidt
integral operator (cf. Lemma $\ref{LGD}$ in Section $\ref{PT1}$) \cite%
{Glassey}, and so compactness of the integral operator $K$ can be obtained.
This work extends the results of \cite{Be-21a} for monatomic multicomponent
mixtures and polyatomic single species, where the polyatomicity is modeled
by a discrete internal energy variable \cite{ErnGio-94,GS-99}, to the case
of polyatomic multicomponent mixtures, where the polyatomicity is modeled by
discrete internal energy variables. To consider mixtures of monatomic and
polyatomic molecules are of highest relevance in, e.g., the upper atmosphere 
\cite{BBBD-18}.

Following the lines of \cite{Be-21a,Be-21b}, motivated by an approach by
Kogan in \cite[Sect. 2.8]{Kogan} for the monatomic single species case, a
probabilistic formulation of the collision operator is considered as the
starting point. With this approach, it is shown, based on\ slightly modified
arguments from the ones in \cite{Be-21a}, that the integral operator $K$ can
be written as a sum of compact operators in the form of Hilbert-Schmidt
integral operators and approximately Hilbert-Schmidt integral operators -
which are uniform limits of Hilbert-Schmidt integral operators - and so
compactness of the integral operator $K$ follows. The operator $K$ is
self-adjoint, as well as, the collision frequency, why the linearized
collision operator, as the sum of two self-adjoint operators of which one is
bounded, is also self-adjoint.

For models corresponding to hard sphere models in the monatomic case, bounds
on the collision frequency are obtained. Then the collision frequency is
coercive and becomes a Fredholm operator. The set of Fredholm operators is
closed under addition with compact operators. Therefore also the linearized
collision operator becomes a Fredholm operator by the compactness of the
integral operator $K$. For hard sphere like models the linearized collision
operator satisfies all the properties of the general linear operator in the
abstract half-space problem considered in \cite{Be-21}.

The rest of the paper is organized as follows. In Section $\ref{S2}$, the
model considered is presented. The probabilistic formulation of the
collision operators considered and its relations to more classical
formulations \cite{ErnGio-94,GS-99} are accounted for in Section $\ref{S2.1}$%
.\ Some classical results for the collision operators in Section $\ref{S2.2}$
and the linearized collision operator in Section $\ref{S2.3}$ are reviewed.
Section $\ref{S3}$ is devoted to the main results of this paper, while the
main proofs are addressed in Sections $\ref{PT1}-\ref{PT2}$; a proof of
compactness of the integral operator $K$ is presented in Section $\ref{PT1}$%
, while a proof of the bounds on the collision frequency appears in Section $%
\ref{PT2}$. Finally, the appendix concerns a proof of a crucial - for the
compactness - lemma, which is an extension of a corresponding lemma for the
monatomic mixture case \cite{BGPS-13,Be-21a}.

\section{Model\label{S2}}

This section concerns the model considered. A probabilistic formulation of
the collision operator is considered, whose relation to a more classical
formulation is accounted for. Known properties of the model and
corresponding linearized collision operator are also reviewed.

Consider a multicomponent mixture of $s$ polyatomic species $a_{1},...,a_{s}$%
, with masses $m_{1},...,m_{s}$, respectively. The polyatomicity is modeled
by $r_{\alpha }$ different internal energies $I_{1}^{\alpha
},...,I_{r_{\alpha }}^{\alpha }\ $for each $\alpha \in \left\{
1,...,s\right\} $. Here the internal energies $I_{1}^{\alpha
},...,I_{r_{\alpha }}^{\alpha }$ are assumed to be nonnegative real numbers; 
$\left\{ I_{1}^{\alpha },...,I_{r_{\alpha }}^{\alpha }\right\} \subset $ $%
\mathbb{R}_{+}$ for $\alpha \in \left\{ 1,...,s\right\} $. A monatomic
species $a_{\alpha }$ can also be considered by choosing $r_{\alpha }=1$,
while $s=1$ would correspond to the case of single species.

The distribution functions are of the vector form $f=\left(
f_{1},...,f_{s}\right) $, where $f_{\alpha }=\left( f_{\alpha
,1},...,f_{\alpha ,r_{\alpha }}\right) $\ is the distribution function for
particles of species $a_{\alpha }$ for $\alpha \in \left\{ 1,...,s\right\} $%
. Here $f_{\alpha ,i}=f_{\alpha ,i}\left( t,\mathbf{x},\boldsymbol{\xi }%
\right) =f_{\alpha }\left( t,\mathbf{x},\boldsymbol{\xi },I_{i}\right) $,
with temporal variable $t\in \mathbb{R}_{+}$, spatial variable $\mathbf{x}%
=\left( x,y,z\right) \in \mathbb{R}^{3}$, and molecular velocity variable $%
\boldsymbol{\xi }=\left( \xi _{x},\xi _{y},\xi _{z}\right) \in \mathbb{R}^{3}
$, is the distribution function for particles of species $a_{\alpha }$ with
internal energy $I_{i}$ for $i\in \left\{ 1,...,r_{\alpha }\right\} $ and $%
\alpha \in \left\{ 1,...,s\right\} $.

Denote by $\Omega \subset \mathbb{N}^{6}$, 
\begin{eqnarray*}
&&\Omega :=  \\
&&\left\{ \left( \alpha ,\beta ,i,j,k,l\right) :\left\{ \alpha ,\beta
\right\} \subseteq \left\{ 1,...,s\right\} ,\left\{ i,k\right\} \subseteq
\left\{ 1,...,r_{\alpha }\right\} ,\left\{ j,l\right\} \subseteq \left\{
1,...,r_{\beta }\right\} \right\} \text{.}
\end{eqnarray*}

Moreover, denote $r=\sum_{\alpha =1}^{s}r_{\alpha }$ and consider the real
Hilbert space 
\begin{equation*}
\mathcal{\mathfrak{h}}:=\left( L^{2}\left( d\boldsymbol{\xi }\right) \right)
^{r},
\end{equation*}
with inner product%
\begin{equation*}
\left( f,g\right) =\sum_{\alpha =1}^{s}\sum_{i=1}^{r_{\alpha }}\int_{\mathbb{%
R}^{3}}f_{\alpha ,i}g_{\alpha ,i}\,d\boldsymbol{\xi }\text{, }f,g\in \left(
L^{2}\left( d\boldsymbol{\xi }\right) \right) ^{r}\text{.}
\end{equation*}

The evolution of the distribution functions is (in the absence of external
forces) described by the (vector) Boltzmann equation%
\begin{equation}
\frac{\partial f}{\partial t}+\left( \boldsymbol{\xi }\cdot \nabla _{\mathbf{%
x}}\right) f=Q\left( f,f\right) \text{,}  \label{BE1}
\end{equation}%
where the (vector) collision operator $Q=\left(
Q_{1}^{1},...,Q_{r_{1}}^{1},...,Q_{1}^{s},...,Q_{r_{s}}^{s}\right) $ is a
quadratic bilinear operator that accounts for the change of velocities and
internal energies of particles due to binary collisions (assuming that the
gas is rarefied, such that other collisions are negligible), where the
component $Q_{i}^{\alpha }$ is the collision operator for particles of
species $a_{\alpha }$ with internal energy $I_{i}$ for $i\in \left\{
1,...,r_{\alpha }\right\} $ and $\alpha \in \left\{ 1,...,s\right\} $.

A collision can, given two particles of species $a_{\alpha }$ and $a_{\beta }
$, $\left\{ \alpha ,\beta \right\} \subset \left\{ 1,...,s\right\} $,
respectively, be represented by two pre-collisional pairs, each pair
consisting of a microscopic velocity and an internal energy, $\left( 
\boldsymbol{\xi },I_{i}^{\alpha }\right) $ and $(\boldsymbol{\xi }_{\ast
},I_{j}^{\beta })$, and two corresponding post-collisional pairs, $\left( 
\boldsymbol{\xi }^{\prime },I_{k}^{\alpha }\right) $ and $(\boldsymbol{\xi }%
_{\ast }^{\prime },I_{l}^{\beta })$, for some $\left\{ \left( i,j\right)
,\left( k,l\right) \right\} \subset \left\{ 1,...,r_{\alpha }\right\} \times
\left\{ 1,...,r_{\beta }\right\} $. The notation for pre- and
post-collisional pairs may be interchanged as well. Due to momentum and
total energy conservation, the following relations have to be satisfied by
the pairs%
\begin{eqnarray}
m_{\alpha }\boldsymbol{\xi }+m_{\beta }\boldsymbol{\xi }_{\ast }
&=&m_{\alpha }\boldsymbol{\xi }^{\prime }+m_{\beta }\boldsymbol{\xi }_{\ast
}^{\prime }  \notag \\
m_{\alpha }\left\vert \boldsymbol{\xi }\right\vert ^{2}+m_{\beta }\left\vert 
\boldsymbol{\xi }_{\ast }\right\vert ^{2}+I_{i}^{\alpha }+I_{j}^{\beta }
&=&m_{\alpha }\left\vert \boldsymbol{\xi }^{\prime }\right\vert
^{2}+m_{\beta }\left\vert \boldsymbol{\xi }_{\ast }^{\prime }\right\vert
^{2}+I_{k}^{\alpha }+I_{l}^{\beta }\text{.}  \label{CI}
\end{eqnarray}

\subsection{Collision operator\label{S2.1}}

The (vector) collision operator $Q=\left(
Q_{1}^{1},...,Q_{r_{1}}^{1},...,Q_{1}^{s},...,Q_{r_{s}}^{s}\right) $ has
components that can be written in the following form 
\begin{eqnarray}
Q_{i}^{\alpha }(f,f) &=&\sum_{\beta =1}^{s}\sum\limits_{k=1}^{r_{\alpha
}}\sum\limits_{j,l=1}^{r_{\beta }}\int_{\left( \mathbb{R}^{3}\right)
^{3}}W_{\alpha \beta }(\boldsymbol{\xi },\boldsymbol{\xi }_{\ast
},I_{i}^{\alpha },I_{j}^{\beta }\left\vert \boldsymbol{\xi }^{\prime },%
\boldsymbol{\xi }_{\ast }^{\prime },I_{k}^{\alpha },I_{l}^{\beta }\right. )\,
\notag \\
&&\times \left( \frac{f_{\alpha ,k}^{\prime }f_{\beta ,l\ast }^{\prime }}{%
\varphi _{k}^{\alpha }\varphi _{l}^{\beta }}-\frac{f_{\alpha ,i}f_{\beta
,j\ast }}{\varphi _{i}^{\alpha }\varphi _{j}^{\beta }}\right) \,d\boldsymbol{%
\xi }_{\ast }d\boldsymbol{\xi }^{\prime }d\boldsymbol{\xi }_{\ast }^{\prime }
\label{c1}
\end{eqnarray}%
for some constant $\left( \varphi _{1}^{1},...,\varphi
_{r_{1}}^{1},...,\varphi _{1}^{s},...,\varphi _{r_{s}}^{s}\right) \in 
\mathbb{R}^{r}$ . Here and below the abbreviations%
\begin{equation}
f_{\alpha ,i\ast }=f_{\alpha ,i}\left( t,\mathbf{x},\boldsymbol{\xi }_{\ast
}\right) \text{, }f_{\alpha ,i}^{\prime }=f_{\alpha ,i}\left( t,\mathbf{x},%
\boldsymbol{\xi }^{\prime }\right) \text{, and }f_{\alpha ,i\ast }^{\prime
}=f_{\alpha ,i}\left( t,\mathbf{x},\boldsymbol{\xi }_{\ast }^{\prime
}\right)   \label{a1}
\end{equation}%
are used. In the collision operator $\left( \ref{c1}\right) $ the gain term
- the term containing the product $f_{\alpha ,k}^{\prime }f_{\beta ,l\ast
}^{\prime }$ - accounts for the gain of particles of species $a_{\alpha }$\
with microscopic velocity $\boldsymbol{\xi }$ and internal energy $%
I_{i}^{\alpha }$ (at time $t$ and position $\mathbf{x}$) - here $\left( 
\boldsymbol{\xi },I_{i}^{\alpha }\right) $ and $\left( \boldsymbol{\xi }%
_{\ast },I_{j}^{\beta }\right) $ represent the post-collisional particles,
while the loss term - the term containing the product $f_{\alpha ,i}f_{\beta
,j\ast }$ - accounts for the loss of particles of species $a_{\alpha }$\
with microscopic velocity $\boldsymbol{\xi }$ and internal energy $%
I_{i}^{\alpha }$ - here $\left( \boldsymbol{\xi },I_{i}^{\alpha }\right) $
and $\left( \boldsymbol{\xi }_{\ast },I_{j}^{\beta }\right) $ represent the
pre-collisional particles. The corresponding (signed) internal energy gap is%
\begin{equation*}
\Delta I_{kl,ij}^{\alpha \beta }=I_{k}^{\alpha }+I_{l}^{\beta
}-I_{i}^{\alpha }-I_{j}^{\beta }\text{.}
\end{equation*}%
The transition probabilities 
\begin{equation*}
W_{\alpha \beta }:\left( \left( \mathbb{R}^{3}\right) ^{2}\times \left\{
1,...,r_{\alpha }\right\} \times \left\{ 1,...,r_{\beta }\right\} \right)
^{2}\rightarrow \mathbb{R}_{+}:=[0,\infty )\text{, }\left\{ \alpha ,\beta
\right\} \subset \left\{ 1,...,s\right\} ,
\end{equation*}%
are of the form, cf. \cite{Be-21a},%
\begin{eqnarray}
&&W_{\alpha \beta }(\boldsymbol{\xi },\boldsymbol{\xi }_{\ast
},I_{i}^{\alpha },I_{j}^{\beta }\left\vert \boldsymbol{\xi }^{\prime },%
\boldsymbol{\xi }_{\ast }^{\prime },I_{k}^{\alpha },I_{l}^{\beta }\right. ) 
\notag \\
&=&\left( m_{\alpha }+m_{\beta }\right) ^{2}m_{\alpha }m_{\beta }\sigma
_{kl,ij}^{\alpha \beta }\left( \left\vert \mathbf{g}^{\prime }\right\vert
,\cos \theta \right) \frac{\left\vert \mathbf{g}^{\prime }\right\vert }{%
\left\vert \mathbf{g}\right\vert }\delta _{3}\left( m_{\alpha }\boldsymbol{%
\xi }+m_{\beta }\boldsymbol{\xi }_{\ast }-m_{\alpha }\boldsymbol{\xi }%
^{\prime }-m_{\beta }\boldsymbol{\xi }_{\ast }^{\prime }\right)   \notag \\
&&\times \varphi _{k}^{\alpha }\varphi _{l}^{\beta }\delta _{1}\left( \frac{1%
}{2}\left( m_{\alpha }\left\vert \boldsymbol{\xi }\right\vert ^{2}+m_{\beta
}\left\vert \boldsymbol{\xi }_{\ast }\right\vert ^{2}-m_{\alpha }\left\vert 
\boldsymbol{\xi }^{\prime }\right\vert ^{2}-m_{\beta }\left\vert \boldsymbol{%
\xi }_{\ast }^{\prime }\right\vert ^{2}\right) -\Delta I_{kl,ij}^{\alpha
\beta }\right)   \notag \\
&=&\left( m_{\alpha }+m_{\beta }\right) ^{2}m_{\alpha }m_{\beta }\sigma
_{ij,kl}^{\alpha \beta }\left( \left\vert \mathbf{g}\right\vert ,\cos \theta
\right) \frac{\left\vert \mathbf{g}\right\vert }{\left\vert \mathbf{g}%
^{\prime }\right\vert }\delta _{3}\left( m_{\alpha }\boldsymbol{\xi }%
+m_{\beta }\boldsymbol{\xi }_{\ast }-m_{\alpha }\boldsymbol{\xi }^{\prime
}-m_{\beta }\boldsymbol{\xi }_{\ast }^{\prime }\right)   \notag \\
&&\times \varphi _{i}^{\alpha }\varphi _{j}^{\beta }\delta _{1}\left( \frac{1%
}{2}\left( m_{\alpha }\left\vert \boldsymbol{\xi }\right\vert ^{2}+m_{\beta
}\left\vert \boldsymbol{\xi }_{\ast }\right\vert ^{2}-m_{\alpha }\left\vert 
\boldsymbol{\xi }^{\prime }\right\vert ^{2}-m_{\beta }\left\vert \boldsymbol{%
\xi }_{\ast }^{\prime }\right\vert ^{2}\right) -\Delta I_{kl,ij}^{\alpha
\beta }\right) \text{,}  \notag \\
&&\text{with }\sigma _{ij,kl}^{\alpha \beta }=\sigma _{ij,kl}^{\alpha \beta
}\left( \left\vert \mathbf{g}\right\vert ,\cos \theta \right) >0\text{ a.e., 
}\cos \theta =\frac{\mathbf{g}\cdot \mathbf{g}^{\prime }}{\left\vert \mathbf{%
g}\right\vert \left\vert \mathbf{g}^{\prime }\right\vert }\text{, }\mathbf{g}%
=\boldsymbol{\xi }-\boldsymbol{\xi }_{\ast }\text{,}  \notag \\
&&\mathbf{g}^{\prime }=\boldsymbol{\xi }^{\prime }-\boldsymbol{\xi }_{\ast
}^{\prime }\text{, and }\Delta I_{kl,ij}^{\alpha \beta }=I_{k}^{\alpha
}+I_{l}^{\beta }-I_{i}^{\alpha }-I_{j}^{\beta }\text{,}  \label{tp}
\end{eqnarray}%
where $\delta _{3}$ and $\delta _{1}$ denote the Dirac's delta function in $%
\mathbb{R}^{3}$ and $\mathbb{R}$, respectively; taking the conservation of
momentum and total energy $\left( \ref{CI}\right) $ into account. Here and
below we use the (inconsistent) shorthanded expressions 
\begin{eqnarray*}
\sigma _{ij,kl}^{\alpha \beta } &=&\sigma _{\alpha \beta }(\boldsymbol{\xi },%
\boldsymbol{\xi }_{\ast },I_{i}^{\alpha },I_{j}^{\beta }\left\vert 
\boldsymbol{\xi }^{\prime },\boldsymbol{\xi }_{\ast }^{\prime
},I_{k}^{\alpha },I_{l}^{\beta }\right. )=\widetilde{\sigma }_{\alpha \beta
}(\left\vert \mathbf{g}\right\vert ,\cos \theta ,I_{i}^{\alpha
},I_{j}^{\beta },I_{k}^{\alpha },I_{l}^{\beta })\text{ and} \\
\sigma _{kl,ij}^{\alpha \beta } &=&\sigma _{\alpha \beta }(\boldsymbol{\xi }%
^{\prime },\boldsymbol{\xi }_{\ast }^{\prime },I_{k}^{\alpha },I_{l}^{\beta
}\left\vert \boldsymbol{\xi },\boldsymbol{\xi }_{\ast },I_{i}^{\alpha
},I_{j}^{\beta }\right. )=\widetilde{\sigma }_{\alpha \beta }(\left\vert 
\mathbf{g}^{\prime }\right\vert ,\cos \theta ,I_{k}^{\alpha },I_{l}^{\beta
},I_{i}^{\alpha },I_{j}^{\beta })\text{,}
\end{eqnarray*}%
for given scattering cross-section 
\begin{equation*}\sigma _{\alpha \beta }:\left( \left( 
\mathbb{R}^{3}\right) ^{2}\times \left\{ 1,...,r_{\alpha }\right\} \times
\left\{ 1,...,r_{\beta }\right\} \right) ^{2}\rightarrow \mathbb{R}_{+}\text{,}
\end{equation*}
or, of the form $\widetilde{\sigma }_{\alpha \beta }:\mathbb{R}_{+}\times %
\left[ -1,1\right] \times \left\{ 1,...,r_{\alpha }\right\} ^{2}\times
\left\{ 1,...,r_{\beta }\right\} ^{2}\rightarrow \mathbb{R}_{+}$; assuming
the pairs $\left( \boldsymbol{\xi },I_{i}^{\alpha }\right) $, $(\boldsymbol{%
\xi }_{\ast },I_{j}^{\beta })$, $\left( \boldsymbol{\xi }^{\prime
},I_{k}^{\alpha }\right) $, and $(\boldsymbol{\xi }_{\ast }^{\prime
},I_{l}^{\beta })$ being given - here, by the arguments of $W_{\alpha \beta }
$ for $\left\{ \alpha ,\beta \right\} \subset \left\{ 1,...,s\right\} $,

The scattering cross sections $\sigma _{kl,ij}^{\alpha \beta }$, with $%
\left( \alpha ,\beta ,i,j,k,l\right) \in \Omega $, are assumed to satisfy
the microreversibility conditions%
\begin{equation}
\varphi _{i}^{\alpha }\varphi _{j}^{\beta }\left\vert \mathbf{g}\right\vert
^{2}\sigma _{ij,kl}^{\alpha \beta }\left( \left\vert \mathbf{g}\right\vert
,\cos \theta \right) =\varphi _{k}^{\alpha }\varphi _{l}^{\beta }\left\vert 
\mathbf{g}^{\prime }\right\vert ^{2}\sigma _{kl,ij}^{\alpha \beta }\left(
\left\vert \mathbf{g}^{\prime }\right\vert ,\cos \theta \right) \text{.}
\label{mr}
\end{equation}%
Furthermore, to obtain invariance of\ change of particles in a collision, it
is assumed that the scattering cross sections $\sigma _{kl,ij}^{\alpha \beta
}$,\ with $\left( \alpha ,\beta ,i,j,k,l\right) \in \Omega $, satisfy the
symmetry relations (fixing the pairs $\left( \boldsymbol{\xi },I_{i}^{\alpha
}\right) $, $(\boldsymbol{\xi }_{\ast },I_{j}^{\beta })$, $\left( 
\boldsymbol{\xi }^{\prime },I_{k}^{\alpha }\right) $, and $(\boldsymbol{\xi }%
_{\ast }^{\prime },I_{l}^{\beta })$) 
\begin{equation}
\sigma _{kl,ij}^{\alpha \beta }=\sigma _{lk,ji}^{\beta \alpha }\text{,}
\label{sr}
\end{equation}%
while 
\begin{equation}
\sigma _{ij,kl}^{\alpha \alpha }\left( \left\vert \mathbf{g}\right\vert
,-\cos \theta \right) =\sigma _{ij,kl}^{\alpha \alpha }\left( \left\vert 
\mathbf{g}\right\vert ,\cos \theta \right) \text{ and }\sigma
_{ij,kl}^{\alpha \alpha }=\sigma _{ji,kl}^{\alpha \alpha }=\sigma
_{ji,lk}^{\alpha \alpha }\text{.}  \label{sr1}
\end{equation}%
The invariance under change of particles in a collision, which follows
directly by the definition of the transition probability $\left( \ref{tp}%
\right) $ and the symmetry relations $\left( \ref{sr}\right) ,\left( \ref%
{sr1}\right) $ for the collision frequency, and the microreversibility of
the collisions $\left( \ref{mr}\right) $, implies that the transition
probabilities $\left( \ref{tp}\right) $ for $\left\{ \alpha ,\beta \right\}
\subset \left\{ 1,...,s\right\} $ satisfy the relations

\begin{eqnarray}
W_{\alpha \beta }(\boldsymbol{\xi },\boldsymbol{\xi }_{\ast },I_{i}^{\alpha
},I_{j}^{\beta }\left\vert \boldsymbol{\xi }^{\prime },\boldsymbol{\xi }%
_{\ast }^{\prime },I_{k}^{\alpha },I_{l}^{\beta }\right. ) &=&W_{\beta
\alpha }(\boldsymbol{\xi }_{\ast },\boldsymbol{\xi },I_{j}^{\beta
},I_{i}^{\alpha }\left\vert \boldsymbol{\xi }_{\ast }^{\prime },\boldsymbol{%
\xi }^{\prime },I_{l}^{\beta },I_{k}^{\alpha }\right. )  \notag \\
W_{\alpha \beta }(\boldsymbol{\xi },\boldsymbol{\xi }_{\ast },I_{i}^{\alpha
},I_{j}^{\beta }\left\vert \boldsymbol{\xi }^{\prime },\boldsymbol{\xi }%
_{\ast }^{\prime },I_{k}^{\alpha },I_{l}^{\beta }\right. ) &=&W_{\alpha
\beta }(\boldsymbol{\xi }^{\prime },\boldsymbol{\xi }_{\ast }^{\prime
},I_{k}^{\alpha },I_{l}^{\beta }\left\vert \boldsymbol{\xi },\boldsymbol{\xi 
}_{\ast },I_{i}^{\alpha },I_{j}^{\beta }\right. )  \notag \\
W_{\alpha \alpha }(\boldsymbol{\xi },\boldsymbol{\xi }_{\ast },I_{i}^{\alpha
},I_{j}^{\beta }\left\vert \boldsymbol{\xi }^{\prime },\boldsymbol{\xi }%
_{\ast }^{\prime },I_{k}^{\alpha },I_{l}^{\beta }\right. ) &=&W_{\alpha
\alpha }(\boldsymbol{\xi },\boldsymbol{\xi }_{\ast },I_{i}^{\alpha
},I_{j}^{\alpha }\left\vert \boldsymbol{\xi }_{\ast }^{\prime },\boldsymbol{%
\xi }^{\prime },I_{l}^{\alpha },I_{k}^{\alpha }\right. )\text{.}
\label{rel1}
\end{eqnarray}

Applying known properties of Dirac's delta function, the transition
probabilities - aiming to obtain expressions for $\mathbf{G}_{\alpha \beta
}^{\prime }=\dfrac{m_{\alpha }\boldsymbol{\xi }^{\prime }+m_{\beta }%
\boldsymbol{\xi }_{\ast }^{\prime }}{m_{\alpha }+m_{\beta }}$ and $%
\left\vert \mathbf{g}^{\prime }\right\vert $ in the arguments of the
delta-functions - may be transformed to 
\begin{eqnarray*}
&&W_{\alpha \beta }(\boldsymbol{\xi },\boldsymbol{\xi }_{\ast
},I_{i}^{\alpha },I_{j}^{\beta }\left\vert \boldsymbol{\xi }^{\prime },%
\boldsymbol{\xi }_{\ast }^{\prime },I_{k}^{\alpha },I_{l}^{\beta }\right. )
\\
&=&\left( m_{\alpha }+m_{\beta }\right) ^{2}m_{\alpha }m_{\beta }\varphi
_{k}^{\alpha }\varphi _{l}^{\beta }\sigma _{kl,ij}^{\alpha \beta }\frac{%
\left\vert \mathbf{g}^{\prime }\right\vert }{\left\vert \mathbf{g}%
\right\vert }\delta _{3}\left( \left( m_{\alpha }+m_{\beta }\right) \left( 
\mathbf{G}_{\alpha \beta }-\mathbf{G}_{\alpha \beta }^{\prime }\right)
\right) \\
&&\times \delta _{1}\left( \dfrac{m_{\alpha }m_{\beta }}{2\left( m_{\alpha
}+m_{\beta }\right) }\left( \left\vert \mathbf{g}\right\vert ^{2}-\left\vert 
\mathbf{g}^{\prime }\right\vert ^{2}\right) -\Delta I_{kl,ij}^{\alpha \beta
}\right) \\
&=&2\varphi _{k}^{\alpha }\varphi _{l}^{\beta }\sigma _{kl,ij}^{\alpha \beta
}\frac{\left\vert \mathbf{g}^{\prime }\right\vert }{\left\vert \mathbf{g}%
\right\vert }\delta _{3}\left( \mathbf{G}_{\alpha \beta }-\mathbf{G}_{\alpha
\beta }^{\prime }\right) \delta _{1}\left( \left\vert \mathbf{g}\right\vert
^{2}-\left\vert \mathbf{g}^{\prime }\right\vert ^{2}-2\frac{m_{\alpha
}+m_{\beta }}{m_{\alpha }m_{\beta }}\Delta I_{kl,ij}^{\alpha \beta }\right)
\\
&=&\varphi _{k}^{\alpha }\varphi _{l}^{\beta }\sigma _{kl,ij}^{\alpha \beta }%
\frac{1}{\left\vert \mathbf{g}\right\vert }\mathbf{1}_{\left\vert \mathbf{g}%
\right\vert ^{2}>2\widetilde{\Delta }I_{kl,ij}^{\alpha \beta }}\delta
_{3}\!\left( \mathbf{G}_{\alpha \beta }-\mathbf{G}_{\alpha \beta }^{\prime
}\right) \delta _{1}\!\left( \sqrt{\left\vert \mathbf{g}\right\vert ^{2}-2%
\widetilde{\Delta }I_{kl,ij}^{\alpha \beta }}-\left\vert \mathbf{g}^{\prime
}\right\vert \right) \\
&=&\varphi _{i}^{\alpha }\varphi _{j}^{\beta }\sigma _{ij,kl}^{\alpha \beta }%
\frac{\left\vert \mathbf{g}\right\vert }{\left\vert \mathbf{g}^{\prime
}\right\vert ^{2}}\mathbf{1}_{\left\vert \mathbf{g}\right\vert ^{2}>2%
\widetilde{\Delta }I_{kl,ij}^{\alpha \beta }}\!\delta _{3}\!\left( \mathbf{G}%
_{\alpha \beta }-\mathbf{G}_{\alpha \beta }^{\prime }\right) \delta
_{1}\left( \sqrt{\left\vert \mathbf{g}\right\vert ^{2}-2\widetilde{\Delta }%
I_{kl,ij}^{\alpha \beta }}-\left\vert \mathbf{g}^{\prime }\right\vert
\right) \text{,} \\
\text{ } &&\text{with }\mathbf{G}_{\alpha \beta }=\frac{m_{\alpha }%
\boldsymbol{\xi }+m_{\beta }\boldsymbol{\xi }_{\ast }}{m_{\alpha }+m_{\beta }%
}\text{ and }\widetilde{\Delta }I_{kl,ij}^{\alpha \beta }=\frac{m_{\alpha
}+m_{\beta }}{m_{\alpha }m_{\beta }}\Delta I_{kl,ij}^{\alpha \beta }\text{.}
\end{eqnarray*}

\begin{remark}
Note that, cf. \cite{HD-69},%
\begin{equation*}
\delta _{1}\left( \dfrac{m_{\alpha }m_{\beta }}{2\left( m_{\alpha }+m_{\beta
}\right) }\left( \left\vert \mathbf{g}\right\vert ^{2}-\left\vert \mathbf{g}%
^{\prime }\right\vert ^{2}\right) -\Delta I_{kl,ij}^{\alpha \beta }\right)
=\delta _{1}\left( E_{ij}^{\alpha \beta }-E_{kl}^{\alpha \beta }\right) ,
\end{equation*}%
for $E_{ij}^{\alpha \beta }=\dfrac{m_{\alpha }m_{\beta }}{2\left( m_{\alpha
}+m_{\beta }\right) }\left\vert \mathbf{g}\right\vert ^{2}+I_{i}^{\alpha
}+I_{j}^{\beta }\ $and $E_{kl}^{\alpha \beta }=\dfrac{m_{\alpha }m_{\beta }}{%
2\left( m_{\alpha }+m_{\beta }\right) }\left\vert \mathbf{g}^{\prime
}\right\vert ^{2}+I_{k}^{\alpha }+I_{l}^{\beta }$.
\end{remark}

By a change of variables $\left\{ \mathbf{g}^{\prime }=\boldsymbol{\xi }%
^{\prime }-\boldsymbol{\xi }_{\ast }^{\prime },\mathbf{G}_{\alpha \beta
}^{\prime }=\dfrac{m_{\alpha }\boldsymbol{\xi }^{\prime }+m_{\beta }%
\boldsymbol{\xi }_{\ast }^{\prime }}{m_{\alpha }+m_{\beta }}\right\} $
followed by one to spherical coordinates, noting that%
\begin{equation}
d\boldsymbol{\xi }^{\prime }d\boldsymbol{\xi }_{\ast }^{\prime }=d\mathbf{G}%
_{\alpha \beta }^{\prime }d\mathbf{g}^{\prime }=\left\vert \mathbf{g}%
^{\prime }\right\vert ^{2}d\mathbf{G}_{\alpha \beta }^{\prime }d\left\vert 
\mathbf{g}^{\prime }\right\vert d\boldsymbol{\omega }\text{, with }%
\boldsymbol{\omega }=\frac{\mathbf{g}^{\prime }}{\left\vert \mathbf{g}%
^{\prime }\right\vert }\text{,}  \label{df1}
\end{equation}%
the observation that%
\begin{eqnarray*}
&&Q_{i}^{\alpha }(f,f) \\
&=&\sum_{\beta =1}^{s}\sum\limits_{k=1}^{r_{\alpha
}}\sum\limits_{j,l=1}^{r_{\beta }}\int_{\left( \mathbb{R}^{3}\right)
^{2}\times \mathbb{R}_{+}\mathbb{\times S}^{2}}W_{\alpha \beta }(\boldsymbol{%
\xi },\boldsymbol{\xi }_{\ast },I_{i}^{\alpha },I_{j}^{\beta }\left\vert 
\boldsymbol{\xi }^{\prime },\boldsymbol{\xi }_{\ast }^{\prime
},I_{k}^{\alpha },I_{l}^{\beta }\right. ) \\
&&\times \left( \frac{f_{\alpha ,k}^{\prime }f_{\beta ,l\ast }^{\prime }}{%
\varphi _{k}^{\alpha }\varphi _{l}^{\beta }}-\frac{f_{\alpha ,i}f_{\beta
,j\ast }}{\varphi _{i}^{\alpha }\varphi _{j}^{\beta }}\right) \,\left\vert 
\mathbf{g}^{\prime }\right\vert ^{2}\,d\boldsymbol{\xi }_{\ast }d\mathbf{G}%
_{\alpha \beta }^{\prime }d\left\vert \mathbf{g}^{\prime }\right\vert d%
\boldsymbol{\omega } \\
&=&\sum_{\beta =1}^{s}\sum\limits_{k=1}^{r_{\alpha
}}\sum\limits_{j,l=1}^{r_{\beta }}\int_{\mathbb{R}^{3}\mathbb{\times S}%
^{2}}\sigma _{ij,kl}^{\alpha \beta }\left\vert \mathbf{g}\right\vert \left(
f_{\alpha ,k}^{\prime }f_{\beta ,l\ast }^{\prime }\frac{\varphi _{i}^{\alpha
}\varphi _{j}^{\beta }}{\varphi _{k}^{\alpha }\varphi _{l}^{\beta }}%
-f_{\alpha ,i}f_{\beta ,j\ast }\right) \,d\boldsymbol{\xi }_{\ast }d%
\boldsymbol{\omega ,}
\end{eqnarray*}%
where%
\begin{equation*}
\left\{ 
\begin{array}{l}
\boldsymbol{\xi }^{\prime }=\mathbf{G}_{\alpha \beta }+\dfrac{m_{\beta }}{%
m_{\alpha }+m_{\beta }}\sqrt{\left\vert \mathbf{g}\right\vert ^{2}-2\dfrac{%
m_{\alpha }+m_{\beta }}{m_{\alpha }m_{\beta }}\Delta I_{kl,ij}^{\alpha \beta
}}\omega \medskip \\ 
\boldsymbol{\xi }_{\ast }^{\prime }=\mathbf{G}_{\alpha \beta }-\dfrac{%
m_{\alpha }}{m_{\alpha }+m_{\beta }}\sqrt{\left\vert \mathbf{g}\right\vert
^{2}-2\dfrac{m_{\alpha }+m_{\beta }}{m_{\alpha }m_{\beta }}\Delta
I_{kl,ij}^{\alpha \beta }}\omega%
\end{array}%
\right. \text{, }\omega \in S^{2}\text{,}
\end{equation*}%
can be made, resulting in a more familiar form of the Boltzmann collision
operator for mixtures with polyatomic molecules modeled with a discrete
energy variable, cf. e.g. \cite{ErnGio-94,GS-99}.

\begin{remark}
\label{Rem1}Note that, when considering spherical coordinates, we, maybe
unconventionally, often\ represent the direction by a vector in $\mathbb{S}%
^{2}$, rather than with azimuthal and polar angels, still referring to it as
spherical coordinates. By representing the direction by a unit vector, the
sine of the polar angle will not appear as a factor in the Jacobian,
resulting in the Jacobian to be the square of the radial length.
\end{remark}

\subsection{Collision invariants and Maxwellian distributions\label{S2.2}}

The following lemma follows directly by the relations $\left( \ref{rel1}%
\right) $.

\begin{lemma}
\label{L0}For any $\left( \alpha ,\beta ,i,j,k,l\right) \in \Omega $, the
measure 
\begin{equation*}
dA_{ij,kl}^{\alpha \beta }=W_{\alpha \beta }(\boldsymbol{\xi },\boldsymbol{%
\xi }_{\ast },I_{i}^{\alpha },I_{j}^{\beta }\left\vert \boldsymbol{\xi }%
^{\prime },\boldsymbol{\xi }_{\ast }^{\prime },I_{k}^{\alpha },I_{l}^{\beta
}\right. )d\boldsymbol{\xi \,}d\boldsymbol{\xi }_{\ast }d\boldsymbol{\xi }%
^{\prime }d\boldsymbol{\xi }_{\ast }^{\prime }
\end{equation*}%
is invariant under the (ordered) interchange%
\begin{equation}
\left( \boldsymbol{\xi },\boldsymbol{\xi }_{\ast },I_{i}^{\alpha
},I_{j}^{\beta }\right) \leftrightarrow \left( \boldsymbol{\xi }^{\prime },%
\boldsymbol{\xi }_{\ast }^{\prime },I_{k}^{\alpha },I_{l}^{\beta }\right)
\label{tr}
\end{equation}%
of variables, while%
\begin{equation*}
dA_{ij,kl}^{\alpha \beta }+dA_{ji,lk}^{\beta \alpha }
\end{equation*}%
is invariant under the (ordered) interchange of variables%
\begin{equation}
\left( \boldsymbol{\xi },\boldsymbol{\xi }^{\prime },I_{i}^{\alpha
},I_{k}^{\alpha }\right) \leftrightarrow \left( \boldsymbol{\xi }_{\ast },%
\boldsymbol{\xi }_{\ast }^{\prime },I_{j}^{\beta },I_{l}^{\beta }\right) 
\text{.}  \label{tr1}
\end{equation}
\end{lemma}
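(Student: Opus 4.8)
The plan is to write $dA_{ij,kl}^{\alpha \beta }$ as the product of the transition-probability density $W_{\alpha \beta }$ and the Lebesgue volume element $d\boldsymbol{\xi }\,d\boldsymbol{\xi }_{\ast }\,d\boldsymbol{\xi }^{\prime }\,d\boldsymbol{\xi }_{\ast }^{\prime }$ on $\left( \mathbb{R}^{3}\right) ^{4}$. Any permutation of the four velocity vectors is a linear isometry of $\mathbb{R}^{12}$, hence has unit Jacobian, so the volume element is unchanged by every such relabeling; consequently both assertions reduce to the corresponding symmetry statements for the density $W_{\alpha \beta }$, which are exactly the ones collected in $\left( \ref{rel1}\right) $.

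For the invariance of $dA_{ij,kl}^{\alpha \beta }$ under the interchange $\left( \ref{tr}\right) $ of the full pre- and post-collisional data, the relevant symmetry of the density is the second identity in $\left( \ref{rel1}\right) $, which asserts that $W_{\alpha \beta }$ is left unchanged when its pre-collisional arguments $\left( \boldsymbol{\xi },\boldsymbol{\xi }_{\ast },I_{i}^{\alpha },I_{j}^{\beta }\right) $ and post-collisional arguments $\left( \boldsymbol{\xi }^{\prime },\boldsymbol{\xi }_{\ast }^{\prime },I_{k}^{\alpha },I_{l}^{\beta }\right) $ are swapped; since the accompanying exchange $\boldsymbol{\xi }\leftrightarrow \boldsymbol{\xi }^{\prime }$, $\boldsymbol{\xi }_{\ast }\leftrightarrow \boldsymbol{\xi }_{\ast }^{\prime }$ leaves the volume element invariant, the measure $dA_{ij,kl}^{\alpha \beta }$ is returned unchanged.

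For the statement about $\left( \ref{tr1}\right) $ the key point is that exchanging the two colliding particles does \emph{not} fix a single term. By the first identity in $\left( \ref{rel1}\right) $ --- which trades $W_{\alpha \beta }$ for $W_{\beta \alpha }$ with the two velocity--energy pairs (and hence the species labels) swapped --- the interchange $\left( \ref{tr1}\right) $ carries $dA_{ij,kl}^{\alpha \beta }$ into $dA_{ji,lk}^{\beta \alpha }$; being an involution, it carries $dA_{ji,lk}^{\beta \alpha }$ back into $dA_{ij,kl}^{\alpha \beta }$. Hence it is the symmetric combination $dA_{ij,kl}^{\alpha \beta }+dA_{ji,lk}^{\beta \alpha }$, and in general not either summand alone, that is preserved; in the single-species case $\alpha =\beta $ the third identity in $\left( \ref{rel1}\right) $, expressing the indistinguishability of identical particles, may be invoked in addition, but is not needed for the stated conclusion.

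I do not expect any real conceptual obstacle: the lemma is immediate from $\left( \ref{rel1}\right) $ once $dA_{ij,kl}^{\alpha \beta }$ is split as above. The only point demanding care is the bookkeeping --- following precisely how the discrete species indices $\alpha ,\beta $ and the internal-energy indices $i,j,k,l$ are permuted by $\left( \ref{tr}\right) $ and $\left( \ref{tr1}\right) $, so that the transformed argument list of $W_{\alpha \beta }$ is matched against exactly the correct member of $\left( \ref{rel1}\right) $.
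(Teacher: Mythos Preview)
Your proposal is correct and follows exactly the approach taken in the paper, which simply states that the lemma ``follows directly by the relations $\left( \ref{rel1}\right)$'' without further elaboration. Your write-up merely spells out this one-line justification: the Lebesgue volume element on $\left( \mathbb{R}^{3}\right) ^{4}$ is invariant under any permutation of the four velocity blocks, so the invariances of $dA_{ij,kl}^{\alpha \beta }$ reduce to the corresponding identities for $W_{\alpha \beta }$, namely the second line of $\left( \ref{rel1}\right) $ for the interchange $\left( \ref{tr}\right) $ and the first line of $\left( \ref{rel1}\right) $ for the interchange $\left( \ref{tr1}\right) $.
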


The weak form of the collision operator $Q(f,f)$ reads%
\begin{eqnarray*}
\left( Q(f,f),g\right) &=&\sum_{\alpha ,\beta
=1}^{s}\sum\limits_{i,k=1}^{r_{\alpha }}\sum\limits_{j,l=1}^{r_{\beta
}}\int_{\left( \mathbb{R}^{3}\right) ^{4}}\left( \frac{f_{\alpha ,k}^{\prime
}f_{\beta ,l\ast }^{\prime }}{\varphi _{k}^{\alpha }\varphi _{l}^{\beta }}-%
\frac{f_{\alpha ,i}f_{\beta ,j\ast }}{\varphi _{i}^{\alpha }\varphi
_{j}^{\beta }}\right) g_{\alpha ,i}\,dA_{ij,kl}^{\alpha \beta } \\
&=&\sum_{\alpha ,\beta =1}^{s}\sum\limits_{i,k=1}^{r_{\alpha
}}\sum\limits_{j,l=1}^{r_{\beta }}\int_{\left( \mathbb{R}^{3}\right)
^{4}}\left( \frac{f_{\alpha ,k}^{\prime }f_{\beta ,l\ast }^{\prime }}{%
\varphi _{k}^{\alpha }\varphi _{l}^{\beta }}-\frac{f_{\alpha ,i}f_{\beta
,j\ast }}{\varphi _{i}^{\alpha }\varphi _{j}^{\beta }}\right) g_{_{\beta
,j\ast }}\,dA_{ij,kl}^{\alpha \beta } \\
&=&-\sum_{\alpha ,\beta =1}^{s}\sum\limits_{i,k=1}^{r_{\alpha
}}\sum\limits_{j,l=1}^{r_{\beta }}\int_{\left( \mathbb{R}^{3}\right)
^{4}}\left( \frac{f_{\alpha ,k}^{\prime }f_{\beta ,l\ast }^{\prime }}{%
\varphi _{k}^{\alpha }\varphi _{l}^{\beta }}-\frac{f_{\alpha ,i}f_{\beta
,j\ast }}{\varphi _{i}^{\alpha }\varphi _{j}^{\beta }}\right) g_{\alpha
,k}^{\prime }\,dA_{ij,kl}^{\alpha \beta } \\
&=&-\sum_{\alpha ,\beta =1}^{s}\sum\limits_{i,k=1}^{r_{\alpha
}}\sum\limits_{j,l=1}^{r_{\beta }}\int_{\left( \mathbb{R}^{3}\right)
^{4}}\left( \frac{f_{\alpha ,k}^{\prime }f_{\beta ,l\ast }^{\prime }}{%
\varphi _{k}^{\alpha }\varphi _{l}^{\beta }}-\frac{f_{\alpha ,i}f_{\beta
,j\ast }}{\varphi _{i}^{\alpha }\varphi _{j}^{\beta }}\right) g_{\beta
,l\ast }^{\prime }\,dA_{ij,kl}^{\alpha \beta }
\end{eqnarray*}%
for any function $g=\left( g_{1},...,g_{s}\right) $, with $g_{\alpha
}=\left( g_{\alpha ,1},...,g_{\alpha ,r_{\alpha }}\right) $, such that the
first integrals are defined for all $\left( \alpha ,\beta ,i,j,k,l\right)
\in \Omega $, while the following equalities are obtained by applying Lemma $%
\ref{L0}$.

Denote for any function $g=\left( g_{1},...,g_{s}\right) $, with $g_{\alpha
}=\left( g_{\alpha ,1},...,g_{\alpha ,r_{\alpha }}\right) $, 
\begin{equation*}
\Delta _{ij,kl}^{\alpha \beta }\left( g\right) =g_{\alpha ,i}+g_{\beta
,j\ast }-g_{\alpha ,k}^{\prime }-g_{\beta ,l\ast }^{\prime }\text{, }\left(
\alpha ,\beta ,i,j,k,l\right) \in \Omega \text{.}
\end{equation*}

We have the following proposition.

\begin{proposition}
\label{P1}Let $g=\left( g_{1},...,g_{s}\right) $, with $g_{\alpha }=\left(
g_{\alpha ,1},...,g_{\alpha ,r_{\alpha }}\right) $, be such that for all $%
\left( \alpha ,\beta ,i,j,k,l\right) \in \Omega $ 
\begin{equation*}
\int_{\left( \mathbb{R}^{3}\right) ^{4}}\frac{f_{\alpha ,k}^{\prime
}f_{\beta ,l\ast }^{\prime }}{\varphi _{k}^{\alpha }\varphi _{l}^{\beta }}-%
\frac{f_{\alpha ,i}f_{\beta ,j\ast }}{\varphi _{i}^{\alpha }\varphi
_{j}^{\beta }}g_{\alpha ,i}\,dA_{ij,kl}^{\alpha \beta }
\end{equation*}%
is defined. Then%
\begin{equation*}
\left( Q(f,f),g\right) =\frac{1}{4}\sum_{\alpha ,\beta
=1}^{s}\sum\limits_{i,k=1}^{r_{\alpha }}\sum\limits_{j,l=1}^{r_{\beta
}}\int_{\left( \mathbb{R}^{3}\right) ^{4}}\left( \frac{f_{\alpha ,k}^{\prime
}f_{\beta ,l\ast }^{\prime }}{\varphi _{k}^{\alpha }\varphi _{l}^{\beta }}-%
\frac{f_{\alpha ,i}f_{\beta ,j\ast }}{\varphi _{i}^{\alpha }\varphi
_{j}^{\beta }}\right) \Delta _{ij,kl}^{\alpha \beta }\left( g\right)
\,dA_{ij}^{kl}.
\end{equation*}
\end{proposition}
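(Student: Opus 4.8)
The plan is to obtain the symmetrized expression by averaging the four representations of $\left( Q(f,f),g\right) $ recorded in the weak form just above the statement. Recall that, by Lemma $\ref{L0}$ together with the symmetry relations $\left( \ref{sr}\right) $, $\left( \ref{sr1}\right) $ and the relations $\left( \ref{rel1}\right) $ for the transition probabilities, each of the four sums
\begin{equation*}
\sum_{\alpha ,\beta =1}^{s}\sum_{i,k=1}^{r_{\alpha }}\sum_{j,l=1}^{r_{\beta }}\int_{\left( \mathbb{R}^{3}\right) ^{4}}\left( \frac{f_{\alpha ,k}^{\prime }f_{\beta ,l\ast }^{\prime }}{\varphi _{k}^{\alpha }\varphi _{l}^{\beta }}-\frac{f_{\alpha ,i}f_{\beta ,j\ast }}{\varphi _{i}^{\alpha }\varphi _{j}^{\beta }}\right) h\,dA_{ij,kl}^{\alpha \beta }
\end{equation*}
obtained by taking $h$ to be $g_{\alpha ,i}$, $g_{\beta ,j\ast }$, $-g_{\alpha ,k}^{\prime }$, or $-g_{\beta ,l\ast }^{\prime }$, equals $\left( Q(f,f),g\right) $. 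The hypothesis that the version with $h=g_{\alpha ,i}$ is defined for every $\left( \alpha ,\beta ,i,j,k,l\right) \in \Omega $ guarantees, through the measure-preserving (up to relabelling of summation indices) changes of variables $\left( \ref{tr}\right) $ and $\left( \ref{tr1}\right) $, that the remaining three are defined as well, so no $\infty -\infty $ issue arises.

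First I would add these four identities. On the left-hand side this gives $4\left( Q(f,f),g\right) $. On the right-hand side all four integrands carry the common factor $\left( \frac{f_{\alpha ,k}^{\prime }f_{\beta ,l\ast }^{\prime }}{\varphi _{k}^{\alpha }\varphi _{l}^{\beta }}-\frac{f_{\alpha ,i}f_{\beta ,j\ast }}{\varphi _{i}^{\alpha }\varphi _{j}^{\beta }}\right) $, so the four choices of $h$ combine into $g_{\alpha ,i}+g_{\beta ,j\ast }-g_{\alpha ,k}^{\prime }-g_{\beta ,l\ast }^{\prime }=\Delta _{ij,kl}^{\alpha \beta }\left( g\right) $. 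Dividing by $4$ then yields the asserted formula.

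The step that actually carries the content is the derivation of the four representations, i.e. the weak-form equalities preceding the statement; within the proposition itself there is no genuine obstacle beyond careful bookkeeping. The passage from the $g_{\alpha ,i}$-form to the $-g_{\alpha ,k}^{\prime }$-form uses invariance of $dA_{ij,kl}^{\alpha \beta }$ under the interchange $\left( \ref{tr}\right) $, under which the bracketed difference changes sign; the passage to the $g_{\beta ,j\ast }$- and $-g_{\beta ,l\ast }^{\prime }$-forms uses invariance of $dA_{ij,kl}^{\alpha \beta }+dA_{ji,lk}^{\beta \alpha }$ under $\left( \ref{tr1}\right) $, which is admissible here precisely because the outer sum ranges symmetrically over all $\left( \alpha ,\beta \right) $ and all admissible $\left( i,j,k,l\right) $, so that the simultaneous relabelling $\alpha \leftrightarrow \beta $, $i\leftrightarrow j$, $k\leftrightarrow l$ leaves the sum unchanged. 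Once these identities are in hand, the proof reduces to the one-line averaging argument above, and the factor $\tfrac{1}{4}$ reflects the two sign-free substitutions ($\left( \ref{tr}\right) $ and $\left( \ref{tr1}\right) $) used in combination.
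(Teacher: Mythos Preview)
Your proposal is correct and follows exactly the approach implicit in the paper: the paper records the four equivalent weak-form representations of $\left( Q(f,f),g\right)$ immediately before the proposition (obtained via Lemma~\ref{L0}), and Proposition~\ref{P1} is then stated without a separate proof, the averaging of the four identities being understood. Your write-up makes this averaging explicit and correctly identifies the role of the interchanges $\left( \ref{tr}\right)$ and $\left( \ref{tr1}\right)$ together with the symmetric summation over $\Omega$.
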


\begin{definition}
A function $g=\left( g_{1},...,g_{s}\right) $, with $g_{\alpha }=\left(
g_{\alpha ,1},...,g_{\alpha ,r_{\alpha }}\right) $,$\ $is a collision
invariant if 
\begin{equation*}
\Delta _{ij,kl}^{\alpha \beta }\left( g\right) \,W_{\alpha \beta }(%
\boldsymbol{\xi },\boldsymbol{\xi }_{\ast },I_{i}^{\alpha },I_{j}^{\beta
}\left\vert \boldsymbol{\xi }^{\prime },\boldsymbol{\xi }_{\ast }^{\prime
},I_{k}^{\alpha },I_{l}^{\beta }\right. )=0\text{ a.e.}
\end{equation*}%
for all $\left( \alpha ,\beta ,i,j,k,l\right) \in \Omega $.
\end{definition}

Denote%
\begin{eqnarray*}
I &=&(I_{1}^{1},...,I_{r_{1}}^{1},...,I_{1}^{s},...,I_{r_{s}}^{s})\text{ and}
\\
e_{\alpha } &=&\left( 0_{r_{1}},...,0_{r_{\alpha -1}},1_{r_{\alpha
}},0_{r_{\alpha +1}},...,0_{r_{s}}\right) \in \mathbb{R}^{r}\text{ for }%
\alpha \in \left\{ 1,...,s\right\} \text{,}
\end{eqnarray*}%
where $0_{r_{\alpha }}=(0,...,0)\in \mathbb{R}^{r_{\alpha }}$ and $%
1_{r_{\alpha }}=(1,...,1)\in \mathbb{R}^{r_{\alpha }}$ for $\alpha \in
\left\{ 1,...,s\right\} $. It is clear that $e_{1},...,e_{s},m\xi _{x},m\xi
_{y},m\xi _{z},$ and $m\left\vert \boldsymbol{\xi }\right\vert ^{2}+2I$,
with $m=\sum_{\alpha =1}^{s}m_{\alpha }e_{\alpha }$, are collision
invariants - corresponding to conservation of mass(es), momentum, and total
energy.

In fact, we have the following proposition, cf. \cite{GS-99, Cercignani-88}.

\begin{proposition}
\label{P2}The vector space of collision invariants is generated by 
\begin{equation*}
\left\{ e_{1},...,e_{s},m\xi _{x},m\xi _{y},m\xi _{z},m\left\vert 
\boldsymbol{\xi }\right\vert ^{2}+2I\right\} \text{, with }m=\sum_{\alpha
=1}^{s}m_{\alpha }e_{\alpha }.
\end{equation*}
\end{proposition}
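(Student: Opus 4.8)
The plan is to show that the listed $s+4$ functions span the space of collision invariants and are linearly independent; independence is immediate since the $e_\alpha$ have disjoint supports and the velocity moments are polynomials of distinct degrees, so the real content is the reverse inclusion. Let $g=(g_1,\dots,g_s)$ be a collision invariant, i.e. $\Delta_{ij,kl}^{\alpha\beta}(g)=0$ a.e. on the manifold cut out by the conservation laws \eqref{CI}, for every $(\alpha,\beta,i,j,k,l)\in\Omega$. First I would exploit the single-species, single-energy-level diagonal collisions $(\alpha,\alpha,i,j,i,j)$: restricting to $\beta=\alpha$ and $k=i$, $l=j$ and using that $\sigma_{ij,ij}^{\alpha\alpha}>0$ a.e. on $|\mathbf g|>0$, the relation forces $g_{\alpha,i}(\boldsymbol\xi)+g_{\alpha,i}(\boldsymbol\xi_\ast)=g_{\alpha,i}(\boldsymbol\xi')+g_{\alpha,i}(\boldsymbol\xi_\ast')$ whenever $\boldsymbol\xi+\boldsymbol\xi_\ast=\boldsymbol\xi'+\boldsymbol\xi_\ast'$ and $|\boldsymbol\xi|^2+|\boldsymbol\xi_\ast|^2=|\boldsymbol\xi'|^2+|\boldsymbol\xi_\ast'|^2$. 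This is exactly the classical monatomic single-species functional equation, whose only (measurable/locally integrable) solutions are $g_{\alpha,i}(\boldsymbol\xi)=a_{\alpha,i}+\mathbf b_{\alpha,i}\cdot\boldsymbol\xi+c_{\alpha,i}|\boldsymbol\xi|^2$; I would invoke this as a known result (cf. \cite{Cercignani-88}).

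Next I would pin down the dependence of the coefficients on the internal-energy index and species. Using collisions within a single species $\alpha$ that change the internal levels, $(\alpha,\alpha,i,j,k,l)$ with $\Delta I_{kl,ij}^{\alpha\alpha}\neq 0$, the invariant relation together with the $\boldsymbol\xi$-dependence just found, evaluated on the corresponding constraint manifold (where $|\boldsymbol\xi'|^2+|\boldsymbol\xi_\ast'|^2=|\boldsymbol\xi|^2+|\boldsymbol\xi_\ast|^2-2\Delta I_{kl,ij}^{\alpha\alpha}/(\text{reduced-mass factor})$), should force $\mathbf b_{\alpha,i}$ to be independent of $i$, $c_{\alpha,i}=c_\alpha$ independent of $i$, and $a_{\alpha,i}=a_\alpha-2c_\alpha I_i^\alpha/m_\alpha$ (matching the combination $m_\alpha|\boldsymbol\xi|^2+2I_i^\alpha$). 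Then, using cross-species collisions $(\alpha,\beta,i,j,k,l)$ with $\alpha\neq\beta$ and the momentum/energy constraints \eqref{CI} with the mass factors $m_\alpha,m_\beta$, the same comparison argument forces $\mathbf b_{\alpha,\cdot}=m_\alpha\mathbf d$ for a common vector $\mathbf d\in\mathbb R^3$ and $c_\alpha=c\,m_\alpha$ for a common scalar $c$ (the mass weights are exactly what conservation of momentum and energy demand), while the $a_\alpha$ remain free. Collecting: $g=\sum_\alpha a_\alpha' e_\alpha + \mathbf d\cdot(m\xi_x,m\xi_y,m\xi_z)+c\,(m|\boldsymbol\xi|^2+2I)$, which is the claimed span.

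The main obstacle is the second paragraph: propagating the rigidity from the "diagonal" monatomic equation across internal levels and across species in a way that is rigorous despite the Dirac-$\delta$/a.e. nature of the constraints and the fact that not every quadruple of indices need correspond to an energetically allowed collision (the indicator $\mathbf 1_{|\mathbf g|^2>2\widetilde\Delta I_{kl,ij}^{\alpha\beta}}$ in \eqref{tp} can restrict the available manifold). I would handle this by checking that for each required index transition there is an open set of $|\mathbf g|$ on which the collision is allowed and $\sigma^{\alpha\beta}_{ij,kl}>0$, so the functional identity holds on a set large enough to determine the polynomial coefficients; this uses that the $I_i^\alpha$ are finite real numbers, so $|\mathbf g|^2>2\widetilde\Delta I_{kl,ij}^{\alpha\beta}$ holds on a nonempty (half-)line. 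A secondary point to be careful about is the regularity hypothesis on $g$ needed to apply the classical characterization of solutions of the Cauchy-type functional equation; I would state the proposition for $g$ in the natural class (e.g. measurable, or polynomially bounded) consistent with how it is used later for the linearized operator, and reference \cite{GS-99,Cercignani-88} for the base case rather than reprove it.
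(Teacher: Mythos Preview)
The paper does not actually prove Proposition~\ref{P2}: it is stated with the attribution ``cf.\ \cite{GS-99, Cercignani-88}'' and no argument is given. So there is nothing to compare your proof against in the paper itself; the authors simply import the result from the literature.

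That said, your outline is the standard route one finds in those references and is essentially correct. One small slip: in your first step you write the diagonal index tuple as $(\alpha,\alpha,i,j,i,j)$ but then display the functional equation with $g_{\alpha,i}$ in all four slots; either take $j=i$ (so the tuple is $(\alpha,\alpha,i,i,i,i)$, giving the pure monatomic equation for the single component $g_{\alpha,i}$), or keep general $j$ and write $g_{\alpha,i}+g_{\alpha,j\ast}=g_{\alpha,i}'+g_{\alpha,j\ast}'$. Either choice works and feeds into the classical characterization you cite. Your identified obstacle---that the inelastic collision manifold may be empty for small $|\mathbf g|$ because of the indicator $\mathbf 1_{|\mathbf g|^2>2\widetilde{\Delta}I_{kl,ij}^{\alpha\beta}}$---is real but, as you note, harmless: for any fixed index tuple the allowed set is a half-line in $|\mathbf g|$ with nonempty interior, and since the $g_{\alpha,i}$ are already known to be quadratic polynomials in $\boldsymbol\xi$ from step one, matching coefficients on any open set of the constraint manifold suffices. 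The regularity caveat is likewise appropriate; in the paper's later use (Proposition~\ref{Prop1}) one only needs the result for $g\in\mathfrak h$, which is more than enough to invoke the classical theorem.
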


Define%
\begin{equation*}
\mathcal{W}\left[ f\right] :=\left( Q(f,f),\log \left( \varphi ^{-1}f\right)
\right) ,
\end{equation*}%
where $\varphi =\mathrm{diag}\left( \varphi _{1}^{1},...,\varphi
_{r_{1}}^{1},...,\varphi _{1}^{s},...,\varphi _{r_{s}}^{s}\right) $. It
follows by Proposition $\ref{P1}$ that%
\begin{eqnarray*}
\mathcal{W}\left[ f\right] &=&-\frac{1}{4}\sum_{\alpha ,\beta
=1}^{s}\sum\limits_{i,k=1}^{r_{\alpha }}\sum\limits_{j,l=1}^{r_{\beta
}}\int_{\left( \mathbb{R}^{3}\right) ^{4}}\frac{f_{\alpha ,i}f_{\beta ,j\ast
}}{\varphi _{i}^{\alpha }\varphi _{j}^{\beta }}\left( \frac{\varphi
_{i}^{\alpha }\varphi _{j}^{\beta }f_{\alpha ,k}^{\prime }f_{\beta ,l\ast
}^{\prime }}{f_{\alpha ,i}f_{\beta ,j\ast }\varphi _{k}^{\alpha }\varphi
_{l}^{\beta }}-1\right) \\
&&\times \log \left( \frac{\varphi _{i}^{\alpha }\varphi _{j}^{\beta
}f_{\alpha ,k}^{\prime }f_{\beta ,l\ast }^{\prime }}{f_{\alpha ,i}f_{\beta
,j\ast }\varphi _{k}^{\alpha }\varphi _{l}^{\beta }}\right)
\,dA_{ij,kl}^{\alpha \beta }\text{.}
\end{eqnarray*}%
Since $\left( x-1\right) \mathrm{log}\left( x\right) \geq 0$ for $x>0$, with
equality if and only if $x=1$,%
\begin{equation*}
\mathcal{W}\left[ f\right] \leq 0\text{,}
\end{equation*}%
with equality if and only if 
\begin{equation}
\left( \frac{f_{\alpha ,k}^{\prime }f_{\beta ,l\ast }^{\prime }}{\varphi
_{k}^{\alpha }\varphi _{l}^{\beta }}-\frac{f_{\alpha ,i}f_{\beta ,j\ast }}{%
\varphi _{i}^{\alpha }\varphi _{j}^{\beta }}\right) W_{\alpha \beta }(%
\boldsymbol{\xi },\boldsymbol{\xi }_{\ast },I_{i}^{\alpha },I_{j}^{\beta
}\left\vert \boldsymbol{\xi }^{\prime },\boldsymbol{\xi }_{\ast }^{\prime
},I_{k}^{\alpha },I_{l}^{\beta }\right. )=0\text{ a.e.}  \label{m1}
\end{equation}%
for all $\left( \alpha ,\beta ,i,j,k,l\right) \in \Omega $, or,
equivalently, if and only if%
\begin{equation*}
Q(f,f)\equiv 0\text{.}
\end{equation*}

For any equilibrium, or, Maxwellian, distribution $M=(M_{1},...,M_{s})$,
with $M_{\alpha }=\left( M_{\alpha ,1},...,M_{\alpha ,r_{\alpha }}\right) $,
it follows by equation $\left( \ref{m1}\right) $, since $Q(M,M)\equiv 0$,
that for any $\left( \alpha ,\beta ,i,j,k,l\right) \in \Omega $%
\begin{eqnarray*}
&&\left( \log \frac{M_{\alpha ,i}}{\varphi _{i}^{\alpha }}+\log \frac{%
M_{\beta ,j\ast }}{\varphi _{j}^{\beta }}-\log \frac{M_{\alpha ,k}^{\prime }%
}{\varphi _{k}^{\alpha }}-\log \frac{M_{\beta ,l\ast }^{\prime }}{\varphi
_{l}^{\beta }}\right)  \\
\times  &&W_{\alpha \beta }(\boldsymbol{\xi },\boldsymbol{\xi }_{\ast
},I_{i}^{\alpha },I_{j}^{\beta }\left\vert \boldsymbol{\xi }^{\prime },%
\boldsymbol{\xi }_{\ast }^{\prime },I_{k}^{\alpha },I_{l}^{\beta }\right. )=0%
\text{ a.e. .}
\end{eqnarray*}%
Hence, $\log \left( \varphi ^{-1}M\right) =\left( \log \dfrac{M_{1,1}}{%
\varphi _{1}^{1}},...,\log \dfrac{M_{s,r_{s}}}{\varphi _{s}^{r_{s}}}\right) $
is a collision invariant, and the components of the Maxwellian distributions 
$M=(M_{1},...,M_{s})$ are of the form 
\begin{equation*}
M_{\alpha ,i}=\dfrac{n_{\alpha }\varphi _{i}^{\alpha }m_{\alpha }^{3/2}}{%
\left( 2\pi T\right) ^{3/2}q_{\alpha }}e^{-\left( m_{\alpha }\left\vert 
\boldsymbol{\xi }-\mathbf{u}\right\vert ^{2}+2I_{i}^{\alpha }\right) /\left(
2T\right) }\text{,}
\end{equation*}%
where $n_{\alpha }=\left( M,e_{_{\alpha }}\right) $, $\mathbf{u}=\dfrac{1}{%
\rho }\left( M,m\boldsymbol{\xi }\right) $, and $T=\dfrac{1}{3n}\left(
M,m\left\vert \boldsymbol{\xi }-\mathbf{u}\right\vert ^{2}\right) $, with $%
n=\sum\limits_{\alpha =1}^{s}n_{\alpha }$, $\rho =\sum\limits_{\alpha
=1}^{s}m_{\alpha }n_{\alpha }$, and $m=\sum\limits_{\alpha =1}^{s}m_{\alpha
}e_{\alpha }$, while $q_{\alpha }=\sum\limits_{i=1}^{r_{\alpha }}\varphi
_{i}^{\alpha }e^{-I_{i}^{\alpha }/T}$, for $i\in \left\{ 1,...,r_{\alpha
}\right\} $ and $\alpha \in \left\{ 1,...,s\right\} $.

Note that, by equation $\left( \ref{m1}\right) $, any Maxwellian
distribution, or, just Maxwellian, $M=(M_{1},...,M_{s})$, with $M_{\alpha
}=\left( M_{\alpha ,1},...,M_{\alpha ,r_{\alpha }}\right) $, for any $\left(
\alpha ,\beta ,i,j,k,l\right) \in \Omega $ satisfies the relation 
\begin{equation}
\left( \frac{M_{\alpha ,k}^{\prime }M_{\beta ,l\ast }^{\prime }}{\varphi
_{k}^{\alpha }\varphi _{l}^{\beta }}-\frac{M_{\alpha ,i}M_{\beta ,j\ast }}{%
\varphi _{i}^{\alpha }\varphi _{j}^{\beta }}\right) W_{\alpha \beta }(%
\boldsymbol{\xi },\boldsymbol{\xi }_{\ast },I_{i}^{\alpha },I_{j}^{\beta
}\left\vert \boldsymbol{\xi }^{\prime },\boldsymbol{\xi }_{\ast }^{\prime
},I_{k}^{\alpha },I_{l}^{\beta }\right. )=0\text{ a.e. }.  \label{M1}
\end{equation}

\begin{remark}
Introducing the $\mathcal{H}$-functional%
\begin{equation*}
\mathcal{H}\left[ f\right] =\left( f,\log f\right) \text{,}
\end{equation*}%
an $\mathcal{H}$-theorem can be obtained.
\end{remark}

\subsection{Linearized collision operator\label{S2.3}}

Considering a deviation of a Maxwellian distribution $M=(M_{1},...,M_{s})$,
with $M_{\alpha }=\left( M_{\alpha ,1},...,M_{\alpha ,r_{\alpha }}\right) $,
where $M_{\alpha ,i}=\dfrac{n_{\alpha }\varphi _{i}^{\alpha }m_{\alpha
}^{3/2}}{\left( 2\pi \right) ^{3/2}q_{\alpha }}e^{-m_{\alpha }\left\vert 
\boldsymbol{\xi }\right\vert ^{2}/2}e^{-I_{i}^{\alpha }}$, of the form%
\begin{equation}
f=M+\mathcal{M}^{1/2}h  \label{s1}
\end{equation}%
where $\mathcal{M}=\mathrm{diag}\left(
M_{1,1},...,M_{1,r_{1}},...,M_{s,1},...,M_{s,r_{s}}\right) $, results, by
insertion in the Boltzmann equation $\left( \ref{BE1}\right) $, in the system%
\begin{equation}
\frac{\partial h}{\partial t}+\left( \boldsymbol{\xi }\cdot \nabla _{\mathbf{%
x}}\right) h+\mathcal{L}h=\Gamma \left( h,h\right) \text{,}  \label{LBE}
\end{equation}%
where the components of the linearized collision operator $\mathcal{L}%
=\left( \mathcal{L}_{1},...,\mathcal{L}_{s}\right) $, with $\mathcal{L}%
_{\alpha }=\left( \mathcal{L}_{\alpha ,1},...,\mathcal{L}_{\alpha ,r_{\alpha
}}\right) $, are given by \ 
\begin{eqnarray}
\mathcal{L}_{\alpha ,i}h &=&-M_{\alpha ,i}^{-1/2}\left( Q_{i}^{\alpha }(M,%
\mathcal{M}^{1/2}h)+Q_{i}^{\alpha }(\mathcal{M}^{1/2}h,M)\right)   \notag \\
&=&\sum\limits_{\beta =1}^{s}\sum\limits_{k=1}^{r_{\alpha
}}\sum\limits_{j,l=1}^{r_{\beta }}\int_{\left( \mathbb{R}^{3}\right)
^{3}}\left( \frac{M_{\beta ,j\ast }M_{\alpha ,k}^{\prime }M_{\beta ,l\ast
}^{\prime }}{\varphi _{i}^{\alpha }\varphi _{j}^{\beta }\varphi _{k}^{\alpha
}\varphi _{l}^{\beta }}\right) ^{1/2}  \notag \\
&&\times W_{\alpha \beta }(\boldsymbol{\xi },\boldsymbol{\xi }_{\ast
},I_{i}^{\alpha },I_{j}^{\beta }\left\vert \boldsymbol{\xi }^{\prime },%
\boldsymbol{\xi }_{\ast }^{\prime },I_{k}^{\alpha },I_{l}^{\beta }\right.
)\Delta _{ij,kl}^{\alpha \beta }\left( \mathcal{M}^{-1/2}h\right) \,d%
\boldsymbol{\xi }_{\ast }d\boldsymbol{\xi }^{\prime }d\boldsymbol{\xi }%
_{\ast }^{\prime }  \notag \\
&=&\nu _{\alpha ,i}h_{\alpha ,i}-K_{\alpha ,i}\left( h\right) \text{,}
\label{dec2}
\end{eqnarray}%
with 
\begin{eqnarray}
\nu _{\alpha ,i} &=&\sum\limits_{\beta =1}^{s}\sum\limits_{k=1}^{r_{\alpha
}}\sum\limits_{j,l=1}^{r_{\beta }}\int_{\left( \mathbb{R}^{3}\right) ^{3}}%
\frac{M_{\beta ,j\ast }}{\varphi _{i}^{\alpha }\varphi _{j}^{\beta }}%
W_{\alpha \beta }(\boldsymbol{\xi },\boldsymbol{\xi }_{\ast },I_{i}^{\alpha
},I_{j}^{\beta }\left\vert \boldsymbol{\xi }^{\prime },\boldsymbol{\xi }%
_{\ast }^{\prime },I_{k}^{\alpha },I_{l}^{\beta }\right. )d\boldsymbol{\xi }%
_{\ast }d\boldsymbol{\xi }^{\prime }d\boldsymbol{\xi }_{\ast }^{\prime }%
\text{,}  \notag \\
K_{\alpha ,i} &=&\sum\limits_{\beta =1}^{s}\sum\limits_{k=1}^{r_{\alpha
}}\sum\limits_{j,l=1}^{r_{\beta }}\int_{\left( \mathbb{R}^{3}\right)
^{3}}\left( \frac{h_{\alpha ,k}^{\prime }}{\left( M_{\alpha ,k}^{\prime
}\right) ^{1/2}}+\frac{h_{\beta ,l\ast }^{\prime }}{\left( M_{\beta ,l\ast
}^{\prime }\right) ^{1/2}}-\frac{h_{\beta ,j\ast }}{M_{\beta ,j\ast }^{1/2}}%
\right)   \notag \\
&&\times \left( \frac{M_{\beta ,j\ast }M_{\alpha ,k}^{\prime }M_{\beta
,l\ast }^{\prime }}{\varphi _{i}^{\alpha }\varphi _{j}^{\beta }\varphi
_{k}^{\alpha }\varphi _{l}^{\beta }}\right) ^{1/2}W_{\alpha \beta }(%
\boldsymbol{\xi },\boldsymbol{\xi }_{\ast },I_{i}^{\alpha },I_{j}^{\beta
}\left\vert \boldsymbol{\xi }^{\prime },\boldsymbol{\xi }_{\ast }^{\prime
},I_{k}^{\alpha },I_{l}^{\beta }\right. )d\boldsymbol{\xi }_{\ast }d%
\boldsymbol{\xi }^{\prime }d\boldsymbol{\xi }_{\ast }^{\prime }  \label{dec1}
\end{eqnarray}%
for all $i\in \left\{ 1,...,r_{\alpha }\right\} $ and $\alpha \in \left\{
1,...,s\right\} $, while the components of the quadratic term $\Gamma
=\left( \Gamma _{1},...,\Gamma _{s}\right) $, with $\Gamma _{\alpha }=\left(
\Gamma _{\alpha ,1},...,\Gamma _{\alpha ,r_{\alpha }}\right) $, are given by%
\begin{equation}
\Gamma _{\alpha ,i}\left( h,h\right) =M_{\alpha ,i}^{-1/2}Q_{i}^{\alpha }(%
\mathcal{M}^{1/2}h,\mathcal{M}^{1/2}h)  \label{nl1}
\end{equation}%
for all $i\in \left\{ 1,...,r_{\alpha }\right\} $ and $\alpha \in \left\{
1,...,s\right\} $.

The multiplication operator $\Lambda $ defined by 
\begin{equation*}
\Lambda (f)=\nu f\text{, where }\nu =\mathrm{diag}\left( \nu _{1,1},...,\nu
_{1,r_{1}},...,\nu _{s,1},...,\nu _{s,r_{s}}\right) \text{,}
\end{equation*}%
is a closed, densely defined, self-adjoint operator on $\left( L^{2}\left( d%
\boldsymbol{\xi }\right) \right) ^{r}$. It is Fredholm, as well, if and only
if $\Lambda $ is coercive.

The following lemma follows immediately by Lemma $\ref{L0}$.

\begin{lemma}
\label{L1}For any $\left( \alpha ,\beta ,i,j,k,l\right) \in \Omega $ the
measure 
\begin{equation*}
d\widetilde{A}_{ij,kl}^{\alpha \beta }=\left( \frac{M_{\alpha ,i}M_{\beta
,j\ast }M_{\alpha ,k}^{\prime }M_{\beta ,l\ast }^{\prime }}{\varphi
_{i}^{\alpha }\varphi _{j}^{\beta }\varphi _{k}^{\alpha }\varphi _{l}^{\beta
}}\right) ^{1/2}dA_{ij,kl}^{\alpha \beta }
\end{equation*}%
is invariant under the (ordered) interchange $\left( \ref{tr}\right) $ of
variables, while%
\begin{equation*}
d\widetilde{A}_{ij,kl}^{\alpha \beta }+d\widetilde{A}_{ji,lk}^{\beta \alpha }
\end{equation*}%
is invariant under the (ordered) interchange $\left( \ref{tr1}\right) $ of
variables
\end{lemma}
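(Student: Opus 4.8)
The plan is to reduce both assertions to Lemma $\ref{L0}$ by peeling off the extra Maxwellian weight. Setting
\[
w_{ij,kl}^{\alpha \beta }:=\left( \frac{M_{\alpha ,i}M_{\beta ,j\ast
}M_{\alpha ,k}^{\prime }M_{\beta ,l\ast }^{\prime }}{\varphi _{i}^{\alpha
}\varphi _{j}^{\beta }\varphi _{k}^{\alpha }\varphi _{l}^{\beta }}\right)
^{1/2}=\left( \frac{M_{\alpha ,i}}{\varphi _{i}^{\alpha }}\right)
^{1/2}\left( \frac{M_{\beta ,j\ast }}{\varphi _{j}^{\beta }}\right)
^{1/2}\left( \frac{M_{\alpha ,k}^{\prime }}{\varphi _{k}^{\alpha }}\right)
^{1/2}\left( \frac{M_{\beta ,l\ast }^{\prime }}{\varphi _{l}^{\beta }}\right)
^{1/2}\text{,}
\]
one has $d\widetilde{A}_{ij,kl}^{\alpha \beta }=w_{ij,kl}^{\alpha \beta
}\,dA_{ij,kl}^{\alpha \beta }$, so it suffices to check that $w_{ij,kl}^{\alpha \beta }$ behaves well under the interchanges $\left( \ref{tr}\right) $ and $\left( \ref{tr1}\right) $. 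The decisive observation is that the last product contains exactly one factor $\left( M_{\bullet }/\varphi _{\bullet }\right) ^{1/2}$ for each of the four pairs $\left( \boldsymbol{\xi },I_{i}^{\alpha }\right) $, $(\boldsymbol{\xi }_{\ast },I_{j}^{\beta })$, $\left( \boldsymbol{\xi }^{\prime },I_{k}^{\alpha }\right) $, $(\boldsymbol{\xi }_{\ast }^{\prime },I_{l}^{\beta })$ entering the collision; hence $w_{ij,kl}^{\alpha \beta }$ is invariant under every permutation of these four pairs.

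Now both interchanges in question merely permute those four pairs: $\left( \ref{tr}\right) $ swaps $\left( \boldsymbol{\xi },I_{i}^{\alpha }\right) \leftrightarrow \left( \boldsymbol{\xi }^{\prime },I_{k}^{\alpha }\right) $ together with $(\boldsymbol{\xi }_{\ast },I_{j}^{\beta })\leftrightarrow (\boldsymbol{\xi }_{\ast }^{\prime },I_{l}^{\beta })$, while $\left( \ref{tr1}\right) $ swaps $\left( \boldsymbol{\xi },I_{i}^{\alpha }\right) \leftrightarrow (\boldsymbol{\xi }_{\ast },I_{j}^{\beta })$ together with $\left( \boldsymbol{\xi }^{\prime },I_{k}^{\alpha }\right) \leftrightarrow (\boldsymbol{\xi }_{\ast }^{\prime },I_{l}^{\beta })$. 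Accordingly $w_{ij,kl}^{\alpha \beta }$ is left unchanged by each of them. For $\left( \ref{tr}\right) $ this immediately gives the first claim: $d\widetilde{A}_{ij,kl}^{\alpha \beta }=w_{ij,kl}^{\alpha \beta }\,dA_{ij,kl}^{\alpha \beta }$ is the product of a $\left( \ref{tr}\right) $-invariant scalar and, by Lemma $\ref{L0}$, a $\left( \ref{tr}\right) $-invariant measure. For $\left( \ref{tr1}\right) $ one argues in the same way, checking that $d\widetilde{A}_{ij,kl}^{\alpha \beta }$ and $d\widetilde{A}_{ji,lk}^{\beta \alpha }$ transform into one another exactly as $dA_{ij,kl}^{\alpha \beta }$ and $dA_{ji,lk}^{\beta \alpha }$ do (their attached weights $w_{ij,kl}^{\alpha \beta }$ and $w_{ji,lk}^{\beta \alpha }$ being themselves unchanged by $\left( \ref{tr1}\right) $); hence
\[
d\widetilde{A}_{ij,kl}^{\alpha \beta }+d\widetilde{A}_{ji,lk}^{\beta \alpha
}=w_{ij,kl}^{\alpha \beta }\,dA_{ij,kl}^{\alpha \beta }+w_{ji,lk}^{\beta
\alpha }\,dA_{ji,lk}^{\beta \alpha }
\]
inherits the invariance of $dA_{ij,kl}^{\alpha \beta }+dA_{ji,lk}^{\beta \alpha }$ under $\left( \ref{tr1}\right) $ from the second part of Lemma $\ref{L0}$.

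This is pure bookkeeping rather than a substantial argument; the only point deserving attention is the very last step, namely keeping track of how the species labels $\alpha ,\beta $ and the internal-energy indices $i,j,k,l$ are relabeled alongside the interchange of velocity variables, so that under $\left( \ref{tr1}\right) $ the weight $w_{ij,kl}^{\alpha \beta }$ stays matched with the appropriate member of the pair $\{dA_{ij,kl}^{\alpha \beta },dA_{ji,lk}^{\beta \alpha }\}$. (One may also note, using $\left( \ref{M1}\right) $, that $w_{ij,kl}^{\alpha \beta }=M_{\alpha ,i}M_{\beta ,j\ast }/\left( \varphi _{i}^{\alpha }\varphi _{j}^{\beta }\right) $ holds $dA_{ij,kl}^{\alpha \beta }$-almost everywhere, but this is not needed.)
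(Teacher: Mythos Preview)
Your proof is correct and is exactly what the paper has in mind: the paper does not spell out a proof at all, stating only that the lemma ``follows immediately by Lemma~$\ref{L0}$,'' and your argument is precisely the unpacking of that remark---the Maxwellian weight factors as a symmetric product over the four collision pairs, so it survives both interchanges and the invariance descends directly from Lemma~$\ref{L0}$.
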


The weak form of the linearized collision operator $\mathcal{L}$ reads

\begin{eqnarray*}
\left( \mathcal{L}h,g\right)  &=&\sum_{\alpha ,\beta
=1}^{s}\sum\limits_{i,k=1}^{r_{\alpha }}\sum\limits_{j,l=1}^{r_{\beta
}}\int_{\left( \mathbb{R}^{3}\right) ^{4}}\Delta _{ij,kl}^{\alpha \beta
}\left( \mathcal{M}^{-1/2}h\right) \frac{g_{\alpha ,i}}{M_{\alpha ,i}^{1/2}}%
\,d\widetilde{A}_{ij,kl}^{\alpha \beta } \\
&=&\sum_{\alpha ,\beta =1}^{s}\sum\limits_{i,k=1}^{r_{\alpha
}}\sum\limits_{j,l=1}^{r_{\beta }}\int_{\left( \mathbb{R}^{3}\right)
^{4}}\Delta _{ij,kl}^{\alpha \beta }\left( \mathcal{M}^{-1/2}h\right) \frac{%
g_{\beta ,j\ast }}{M_{\beta ,j\ast }^{1/2}}\,d\widetilde{A}_{ij,kl}^{\alpha
\beta } \\
&=&-\sum_{\alpha ,\beta =1}^{s}\sum\limits_{i,k=1}^{r_{\alpha
}}\sum\limits_{j,l=1}^{r_{\beta }}\int_{\left( \mathbb{R}^{3}\right)
^{4}}\Delta _{ij,kl}^{\alpha \beta }\left( \mathcal{M}^{-1/2}h\right) \frac{%
g_{\alpha ,k}^{\prime }}{\left( M_{\alpha ,k}^{\prime }\right) ^{1/2}}\,d%
\widetilde{A}_{ij,kl}^{\alpha \beta } \\
&=&-\sum_{\alpha ,\beta =1}^{s}\sum\limits_{i,k=1}^{r_{\alpha
}}\sum\limits_{j,l=1}^{r_{\beta }}\int_{\left( \mathbb{R}^{3}\right)
^{4}}\Delta _{ij,kl}^{\alpha \beta }\left( \mathcal{M}^{-1/2}h\right) \frac{%
g_{\beta ,l\ast }^{\prime }}{\left( M_{\beta ,l\ast }^{\prime }\right) ^{1/2}%
}\,d\widetilde{A}_{ij,kl}^{\alpha \beta }\text{,}
\end{eqnarray*}%
for any function $g=\left( g_{1},...,g_{s}\right) $, with $g_{\alpha
}=\left( g_{\alpha ,1},...,g_{\alpha ,r_{\alpha }}\right) $, such that the
first integrals are defined for all $\left( \alpha ,\beta ,i,j,k,l\right)
\in \Omega $, while the following equalities are obtained by applying Lemma $%
\ref{L1}$. We have the following lemma.

\begin{lemma}
\label{L2}Let $g=\left( g_{1},...,g_{s}\right) $, with $g_{\alpha }=\left(
g_{\alpha ,1},...,g_{\alpha ,r_{\alpha }}\right) $, be such that%
\begin{equation*}
\int_{\left( \mathbb{R}^{3}\right) ^{4}}\Delta _{ij,kl}^{\alpha \beta
}\left( \mathcal{M}^{-1/2}h\right) \frac{g_{\alpha ,i}}{M_{\alpha ,i}^{1/2}}%
\,d\widetilde{A}_{ij,kl}^{\alpha \beta }
\end{equation*}%
is defined for any $\left( \alpha ,\beta ,i,j,k,l\right) \in \Omega $. Then%
\begin{equation*}
\left( \mathcal{L}h,g\right) =\frac{1}{4}\sum_{\alpha ,\beta
=1}^{s}\sum\limits_{i,k=1}^{r_{\alpha }}\sum\limits_{j,l=1}^{r_{\beta
}}\int_{\left( \mathbb{R}^{3}\right) ^{4}}\Delta _{ij,kl}^{\alpha \beta
}\left( \mathcal{M}^{-1/2}h\right) \,\Delta _{ij,kl}^{\alpha \beta }\left( 
\mathcal{M}^{-1/2}g\right) \,d\widetilde{A}_{ij,kl}^{\alpha \beta }.
\end{equation*}
\end{lemma}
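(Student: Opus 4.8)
The plan is to derive the symmetrized weak form from the four unsymmetrized expressions listed just before the lemma statement, exactly in parallel with the passage from the four expressions for $\left(Q(f,f),g\right)$ to the symmetrized identity in Proposition \ref{P1}. First I would recall that the four displayed equalities for $\left(\mathcal{L}h,g\right)$ are all valid under the stated integrability hypothesis: the first is the definition of $\mathcal{L}$ rewritten via $d\widetilde{A}_{ij,kl}^{\alpha\beta}$ (using $\left(M_{\beta,j\ast}M_{\alpha,k}^{\prime}M_{\beta,l\ast}^{\prime}/(\varphi_{i}^{\alpha}\varphi_{j}^{\beta}\varphi_{k}^{\alpha}\varphi_{l}^{\beta})\right)^{1/2}W_{\alpha\beta}\,d\boldsymbol{\xi}\,d\boldsymbol{\xi}_{\ast}d\boldsymbol{\xi}^{\prime}d\boldsymbol{\xi}_{\ast}^{\prime}=M_{\alpha,i}^{-1/2}\,d\widetilde{A}_{ij,kl}^{\alpha\beta}$ from $\left(\ref{dec2}\right)$ together with the form of $d\widetilde{A}_{ij,kl}^{\alpha\beta}$ in Lemma \ref{L1}); the second comes from the interchange $\left(\ref{tr1}\right)$ applied to the measure $d\widetilde{A}_{ij,kl}^{\alpha\beta}+d\widetilde{A}_{ji,lk}^{\beta\alpha}$, which under the summation over $\left\{\alpha,\beta,i,j,k,l\right\}$ swaps the roles of $g_{\alpha,i}/M_{\alpha,i}^{1/2}$ and $g_{\beta,j\ast}/M_{\beta,j\ast}^{1/2}$ while leaving $\Delta_{ij,kl}^{\alpha\beta}\left(\mathcal{M}^{-1/2}h\right)$ unchanged (it is symmetric under that swap); the third and fourth come from the interchange $\left(\ref{tr}\right)$, under which $d\widetilde{A}_{ij,kl}^{\alpha\beta}$ is invariant and $\Delta_{ij,kl}^{\alpha\beta}\left(\mathcal{M}^{-1/2}h\right)$ changes sign, producing the minus signs and replacing $g_{\alpha,i}/M_{\alpha,i}^{1/2}$ by $g_{\alpha,k}^{\prime}/(M_{\alpha,k}^{\prime})^{1/2}$ (respectively $g_{\beta,j\ast}/M_{\beta,j\ast}^{1/2}$ by $g_{\beta,l\ast}^{\prime}/(M_{\beta,l\ast}^{\prime})^{1/2}$).

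Next I would simply add the four expressions and divide by four. On the right-hand side the four factors multiplying $\Delta_{ij,kl}^{\alpha\beta}\left(\mathcal{M}^{-1/2}h\right)$ combine into
\begin{equation*}
\frac{g_{\alpha,i}}{M_{\alpha,i}^{1/2}}+\frac{g_{\beta,j\ast}}{M_{\beta,j\ast}^{1/2}}-\frac{g_{\alpha,k}^{\prime}}{(M_{\alpha,k}^{\prime})^{1/2}}-\frac{g_{\beta,l\ast}^{\prime}}{(M_{\beta,l\ast}^{\prime})^{1/2}}=\Delta_{ij,kl}^{\alpha\beta}\left(\mathcal{M}^{-1/2}g\right),
\end{equation*}
by the very definition of $\Delta_{ij,kl}^{\alpha\beta}$ and of $\mathcal{M}$. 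This yields the claimed identity
\begin{equation*}
\left(\mathcal{L}h,g\right)=\frac14\sum_{\alpha,\beta=1}^{s}\sum_{i,k=1}^{r_{\alpha}}\sum_{j,l=1}^{r_{\beta}}\int_{\left(\mathbb{R}^{3}\right)^{4}}\Delta_{ij,kl}^{\alpha\beta}\left(\mathcal{M}^{-1/2}h\right)\,\Delta_{ij,kl}^{\alpha\beta}\left(\mathcal{M}^{-1/2}g\right)\,d\widetilde{A}_{ij,kl}^{\alpha\beta}.
\end{equation*}

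The only genuine subtlety — and the step I would be most careful about — is bookkeeping in the second equality, where the relevant invariance is of the \emph{sum} $d\widetilde{A}_{ij,kl}^{\alpha\beta}+d\widetilde{A}_{ji,lk}^{\beta\alpha}$ rather than of a single measure: one must check that relabeling the summation indices $\left(\alpha,i,k\right)\leftrightarrow\left(\beta,j,l\right)$ together with the change of integration variables $\left(\ref{tr1}\right)$ genuinely turns the $g_{\alpha,i}/M_{\alpha,i}^{1/2}$ integral into the $g_{\beta,j\ast}/M_{\beta,j\ast}^{1/2}$ one after summing over all of $\Omega$; this is precisely what Lemma \ref{L1} is tailored to deliver, and it is the direct analogue of the corresponding manipulation for $Q$ that produced the $g_{\beta,j\ast}$ line in the weak form of $Q(f,f)$. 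Beyond this, the argument is purely formal once the integrability hypothesis guarantees that all four expressions are finite and the rearrangements are legitimate; no new estimates are needed, so I would keep the proof short and refer back to the analogous computation for Proposition \ref{P1}.
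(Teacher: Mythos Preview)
Your proposal is correct and follows essentially the same approach as the paper: the paper states the four equivalent expressions for $(\mathcal{L}h,g)$ immediately before Lemma~\ref{L2}, noting they follow from Lemma~\ref{L1}, and then leaves the averaging step implicit. Your write-up makes explicit precisely what the paper takes for granted, including the careful remark about the interchange $\left(\ref{tr1}\right)$ acting on the summed measure together with relabeling of summation indices.
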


\begin{proposition}
\label{Prop1}The linearized collision operator is symmetric and nonnegative,%
\begin{equation*}
\left( \mathcal{L}h,g\right) =\left( h,\mathcal{L}g\right) \text{ and }%
\left( \mathcal{L}h,h\right) \geq 0\text{,}
\end{equation*}%
and\ the kernel of $\mathcal{L}$, $\ker \mathcal{L}$, is generated by%
\begin{equation*}
\left\{ \mathcal{M}^{1/2}e_{1},...,\mathcal{M}^{1/2}e_{s},\mathcal{M}%
^{1/2}m\xi _{x},\mathcal{M}^{1/2}m\xi _{y},\mathcal{M}^{1/2}m\xi _{z},%
\mathcal{M}^{1/2}\left( m\left\vert \boldsymbol{\xi }\right\vert
^{2}+2I\right) \right\} \text{,}
\end{equation*}%
where $m=\sum_{\alpha =1}^{s}m_{\alpha }e_{\alpha }$ and $\mathcal{M}=%
\mathrm{diag}\left(
M_{1,1},...,M_{1,r_{1}},...,M_{s,1},...,M_{s,r_{s}}\right) $.
\end{proposition}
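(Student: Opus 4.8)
The plan is to extract all three assertions from the symmetric quadratic form in Lemma \ref{L2} together with the characterization of collision invariants in Proposition \ref{P2}. For the symmetry, observe that the right-hand side of Lemma \ref{L2} — the quantity $\frac{1}{4}\sum_{\alpha,\beta=1}^{s}\sum_{i,k=1}^{r_{\alpha}}\sum_{j,l=1}^{r_{\beta}}\int_{(\mathbb{R}^{3})^{4}}\Delta_{ij,kl}^{\alpha\beta}(\mathcal{M}^{-1/2}h)\,\Delta_{ij,kl}^{\alpha\beta}(\mathcal{M}^{-1/2}g)\,d\widetilde{A}_{ij,kl}^{\alpha\beta}$ — is visibly invariant under interchanging $h$ and $g$, so $(\mathcal{L}h,g)=(h,\mathcal{L}g)$ for all admissible $h,g$. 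Taking $g=h$ gives $(\mathcal{L}h,h)=\frac{1}{4}\sum\int\bigl(\Delta_{ij,kl}^{\alpha\beta}(\mathcal{M}^{-1/2}h)\bigr)^{2}\,d\widetilde{A}_{ij,kl}^{\alpha\beta}\geq 0$, because each measure $d\widetilde{A}_{ij,kl}^{\alpha\beta}$ is nonnegative: it is the strictly positive Maxwellian prefactor $\bigl(M_{\alpha,i}M_{\beta,j\ast}M_{\alpha,k}^{\prime}M_{\beta,l\ast}^{\prime}/(\varphi_{i}^{\alpha}\varphi_{j}^{\beta}\varphi_{k}^{\alpha}\varphi_{l}^{\beta})\bigr)^{1/2}$ times $dA_{ij,kl}^{\alpha\beta}$, which is nonnegative since $W_{\alpha\beta}\geq 0$.

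For $\ker\mathcal{L}$ I would prove both inclusions. First, each listed element lies in $\mathfrak{h}=\left(L^{2}(d\boldsymbol{\xi})\right)^{r}$, being $\mathcal{M}^{1/2}$ applied to a polynomial in $\boldsymbol{\xi}$ of degree at most two, which is square-integrable against the Gaussian weight $\mathcal{M}$; and each lies in $\ker\mathcal{L}$: if $g$ is any of $e_{1},\dots,e_{s},m\xi_{x},m\xi_{y},m\xi_{z},m|\boldsymbol{\xi}|^{2}+2I$, then by the discussion preceding Proposition \ref{P2} it is a collision invariant, i.e. $\Delta_{ij,kl}^{\alpha\beta}(g)\,W_{\alpha\beta}(\cdots)=0$ a.e. for all $(\alpha,\beta,i,j,k,l)\in\Omega$; since $d\widetilde{A}_{ij,kl}^{\alpha\beta}$ carries the factor $W_{\alpha\beta}(\cdots)$, the form of Lemma \ref{L2} with $h=\mathcal{M}^{1/2}g$ vanishes for every choice of its second argument, so $\mathcal{L}(\mathcal{M}^{1/2}g)=0$. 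These $s+4$ elements are linearly independent because the corresponding collision invariants are (Proposition \ref{P2}) and $\mathcal{M}^{1/2}$ is multiplication by a strictly positive function, hence injective. Conversely, if $h\in\ker\mathcal{L}$ then $0=(\mathcal{L}h,h)=\frac{1}{4}\sum\int\bigl(\Delta_{ij,kl}^{\alpha\beta}(\mathcal{M}^{-1/2}h)\bigr)^{2}\,d\widetilde{A}_{ij,kl}^{\alpha\beta}$, and each nonnegative summand must vanish, forcing $\Delta_{ij,kl}^{\alpha\beta}(\mathcal{M}^{-1/2}h)=0$ $d\widetilde{A}_{ij,kl}^{\alpha\beta}$-a.e.; since the Maxwellian prefactor is strictly positive this is equivalent to $\Delta_{ij,kl}^{\alpha\beta}(\mathcal{M}^{-1/2}h)\,W_{\alpha\beta}(\cdots)=0$ a.e. for every $(\alpha,\beta,i,j,k,l)\in\Omega$. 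Hence $\mathcal{M}^{-1/2}h$ is a collision invariant, so by Proposition \ref{P2} it lies in $\mathrm{span}\{e_{1},\dots,e_{s},m\xi_{x},m\xi_{y},m\xi_{z},m|\boldsymbol{\xi}|^{2}+2I\}$, i.e. $h\in\mathcal{M}^{1/2}\,\mathrm{span}\{e_{1},\dots,e_{s},m\xi_{x},m\xi_{y},m\xi_{z},m|\boldsymbol{\xi}|^{2}+2I\}$, which together with the previous inclusion identifies $\ker\mathcal{L}$ exactly.

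The only point needing genuine care is the domain bookkeeping: one must make sure that $(\mathcal{L}h,h)$ is meaningful and that $(\mathcal{L}h,h)=0$ already forces $\mathcal{L}h=0$ on the class of $h$ for which $\mathcal{L}h$ is defined. This follows from the symmetry and nonnegativity just established, via the Cauchy--Schwarz inequality $|(\mathcal{L}h,g)|^{2}\leq(\mathcal{L}h,h)(\mathcal{L}g,g)$ for the nonnegative symmetric form; no work beyond Lemmas \ref{L0}--\ref{L2} and Proposition \ref{P2} is needed. One should also note in passing that $\mathcal{M}^{-1/2}h$ is measurable, so that $\Delta_{ij,kl}^{\alpha\beta}(\mathcal{M}^{-1/2}h)$ makes sense pointwise a.e., because $\mathcal{M}^{-1/2}$ is a strictly positive diagonal multiplier. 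Everything else is routine.
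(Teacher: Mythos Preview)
Your proof is correct and follows essentially the same route as the paper's: both derive symmetry and nonnegativity directly from the bilinear form of Lemma \ref{L2}, and both identify $\ker\mathcal{L}$ by reducing $(\mathcal{L}h,h)=0$ to the statement that $\mathcal{M}^{-1/2}h$ is a collision invariant and then invoking Proposition \ref{P2}. You supply a few details the paper leaves implicit (the forward inclusion for the kernel, linear independence, and the Cauchy--Schwarz step justifying $(\mathcal{L}h,h)=0\Rightarrow\mathcal{L}h=0$), but the underlying argument is the same.
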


\begin{proof}
By Lemma $\ref{L2}$, it is immediate that $\left( \mathcal{L}h,g\right)
=\left( h,\mathcal{L}g\right) $, and%
\begin{equation*}
\left( \mathcal{L}h,h\right) =\frac{1}{4}\sum_{\alpha ,\beta
=1}^{s}\sum\limits_{i,k=1}^{r_{\alpha }}\sum\limits_{j,l=1}^{r_{\beta
}}\int_{\left( \mathbb{R}^{3}\right) ^{4}}\left( \Delta _{ij,kl}^{\alpha
\beta }\left( \mathcal{M}^{-1/2}h\right) \right) ^{2}d\widetilde{A}%
_{ij,kl}^{\alpha \beta }\geq 0.
\end{equation*}%
Furthermore, $h\in \ker \mathcal{L}$ if and only if $\left( \mathcal{L}%
h,h\right) =0$, which will be fulfilled\ if and only if for all $\left(
\alpha ,\beta ,i,j,k,l\right) \in \Omega $%
\begin{equation*}
\Delta _{ij,kl}^{\alpha \beta }\left( \mathcal{M}^{-1/2}h\right) W_{\alpha
\beta }(\boldsymbol{\xi },\boldsymbol{\xi }_{\ast },I_{i}^{\alpha
},I_{j}^{\beta }\left\vert \boldsymbol{\xi }^{\prime },\boldsymbol{\xi }%
_{\ast }^{\prime },I_{k}^{\alpha },I_{l}^{\beta }\right. )=0\text{ a.e.,}
\end{equation*}%
i.e. if and only if $\mathcal{M}^{-1/2}h$ is a collision invariant. The last
part of the lemma now follows by Proposition $\ref{P2}.$
\end{proof}

\begin{remark}
A property of the nonlinear term, although of no relevance to the studies
here, is that it is orthogonal to the kernel of $\mathcal{L}$, i.e. $\Gamma
\left( h,h\right) \in \left( \ker \mathcal{L}\right) ^{\perp _{\mathcal{%
\mathfrak{h}}}}$.

This follows, since any element in $\ker \mathcal{L}$ is of the form $%
\mathcal{M}^{1/2}g$ for some collision invariant $g$, while for any
collision invariant $g$%
\begin{eqnarray*}
\left( \Gamma \left( h,h\right) ,\mathcal{M}^{1/2}g\right) &=&\left( 
\mathcal{M}^{-1/2}Q(\mathcal{M}^{1/2}h,\mathcal{M}^{1/2}h),\mathcal{M}%
^{1/2}g\right) \\
&=&\left( Q(\mathcal{M}^{1/2}h,\mathcal{M}^{1/2}h),g\right) =0\text{.}
\end{eqnarray*}
\end{remark}

\section{Main Results\label{S3}}

This section is devoted to the main results, concerning a compactness
property in Theorem \ref{Thm1} and bounds of the collision frequencies in
Theorem \ref{Thm2}.

Assume that for some positive number $\gamma $, such that $0<\gamma <1$,
there for all $\left( \alpha ,\beta ,i,j,k,l\right) \in \Omega $ is a bound 
\begin{eqnarray}
0 &\leq &\sigma _{ij,kl}^{\alpha \beta }\left( \left\vert \mathbf{g}%
\right\vert ,\cos \theta \right) \leq \frac{C}{\left\vert \mathbf{g}%
\right\vert ^{2}}\left( \Psi _{ij,kl}^{\alpha \beta }+\left( \Psi
_{ij,kl}^{\alpha \beta }\right) ^{\gamma /2}\right) \text{, where }  \notag
\\
&&\Psi _{ij,kl}^{\alpha \beta }=\left\vert \mathbf{g}\right\vert \sqrt{%
\left\vert \mathbf{g}\right\vert ^{2}-2\frac{m_{\alpha }+m_{\beta }}{%
m_{\alpha }m_{\beta }}\Delta I_{kl,ij}^{\alpha \beta }}\text{,}  \label{est1}
\end{eqnarray}%
for $\left\vert \mathbf{g}\right\vert ^{2}>2\left(m_{\alpha }+m_{\beta }\right) %
 \Delta I_{kl,ij}^{\alpha \beta }/\left(m_{\alpha }m_{\beta }\right)$, on the scattering
cross sections $\sigma _{ij,kl}^{\alpha \beta }$. Note that assumption $%
\left( \ref{est1}\right) $ reduces to Grad's assumption \cite{Gr-63} in the
case of vanishing internal energy gap, cf. \cite{Be-21a} for the case of
monatomic multicomponent mixtures. 

The following result may be obtained.

\begin{theorem}
\label{Thm1}Assume that for all $\left( \alpha ,\beta ,i,j,k,l\right) \in
\Omega $ the scattering cross sections $\sigma _{ij,kl}^{\alpha \beta }$
satisfy the bound $\left( \ref{est1}\right) $ for some positive number $%
\gamma $, such that $0<\gamma <1$. Then the operator $K=\left(
K_{1,1},...,K_{\alpha ,r_{1}},...,K_{s,1},...,K_{s,r_{s}}\right) $, with the
components $K_{\alpha ,i}$ given by expressions $\left( \ref{dec1}\right) $
is a self-adjoint compact operator on $\left( L^{2}\left( d\boldsymbol{\xi }%
\right) \right) ^{r}$.
\end{theorem}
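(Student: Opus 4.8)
The strategy is the classical Grad-type decomposition, adapted to the probabilistic formulation: starting from the expression \eqref{dec1} for $K_{\alpha ,i}$, first integrate out the Dirac deltas $\delta_3$ and $\delta_1$ present in the transition probability $W_{\alpha\beta}$. Using the reduced form of $W_{\alpha\beta}$ derived after \eqref{rel1} (the one with $\delta_3(\mathbf{G}_{\alpha\beta}-\mathbf{G}'_{\alpha\beta})$ and $\delta_1(\sqrt{|\mathbf{g}|^2-2\widetilde\Delta I^{\alpha\beta}_{kl,ij}}-|\mathbf{g}'|)$), the $\delta_3$ collapses the $\boldsymbol\xi'_\ast$-integration (or fixes $\mathbf{G}'_{\alpha\beta}$) and the $\delta_1$ fixes $|\mathbf{g}'|$, leaving an integral over $\boldsymbol\xi_\ast$ and a direction $\boldsymbol\omega\in S^2$. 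This realizes each of the three pieces of $K_{\alpha ,i}$ (coming from the $h'_{\alpha,k}$, $h'_{\beta,l\ast}$, and $h_{\beta,j\ast}$ terms) as an integral operator with an explicit kernel: $K_{\alpha ,i}h = \sum (\text{operators on } h_{\gamma,\cdot})$, i.e. $K$ has matrix integral kernel $\mathbf{k}(\boldsymbol\xi,\boldsymbol\xi_\ast)$ built from pieces $k^{(\text{gain})}$ and $k^{(\text{loss-type})}$.

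The next step is to split each kernel into the ``genuinely non-local'' gain parts and the parts of mixed type, exactly following \cite{Be-21a,Be-21b}. The term with $h_{\beta,j\ast}$ (and similarly the resonant/diagonal contributions) will be shown to be Hilbert--Schmidt directly — one estimates $\int\int |k(\boldsymbol\xi,\boldsymbol\xi_\ast)|^2\,d\boldsymbol\xi\,d\boldsymbol\xi_\ast<\infty$ using the bound \eqref{est1} on $\sigma^{\alpha\beta}_{ij,kl}$, which after the delta-reductions controls the kernel by an explicit power of $|\mathbf{g}|$ times Maxwellian weights; the Gaussian factors from $(M_{\beta,j\ast}M'_{\alpha,k}M'_{\beta,l\ast})^{1/2}$ provide the integrability. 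The gain-type kernels $k(\boldsymbol\xi,\boldsymbol\xi_\ast)$ are only ``approximately Hilbert--Schmidt'': one truncates them, defining $k_N = k\,\mathbf{1}_{\{|\boldsymbol\xi-\boldsymbol\xi_\ast|\ge 1/N,\ |\boldsymbol\xi|\le N,\ |k|\le N\}}$ (or the analogous cut-offs used in \cite{Be-21a}), shows each $k_N$ is Hilbert--Schmidt, and shows $\|K^{(\text{gain})}-K_N\|_{\mathrm{op}}\to 0$ by dominating the operator norm via a Schur-type bound $\sup_{\boldsymbol\xi}\int |k-k_N|\,d\boldsymbol\xi_\ast + \sup_{\boldsymbol\xi_\ast}\int |k-k_N|\,d\boldsymbol\xi \to 0$. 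Since a uniform (operator-norm) limit of compact operators is compact, and a finite sum of compact operators is compact, $K$ is compact. Self-adjointness follows from Lemma \ref{L2} (symmetry of $\mathcal L$, $\ker$ analysis) together with self-adjointness of the bounded multiplication operator $\Lambda$: $K=\Lambda-\mathcal L$ is symmetric and bounded, hence self-adjoint.

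The single key ingredient that makes the Hilbert--Schmidt and Schur estimates go through is a change-of-variables lemma — the analogue, for polyatomic multicomponent mixtures, of the change-of-variables lemma of \cite{BGPS-13,Be-21a} — which is exactly the ``crucial lemma'' the author promises to prove in the appendix. It expresses, for fixed $\boldsymbol\xi$, the map $(\boldsymbol\xi_\ast,\boldsymbol\omega)\mapsto(\boldsymbol\xi',\boldsymbol\xi'_\ast)$-induced variables in terms of a new integration variable (essentially $\boldsymbol\xi'$ or a relative velocity), with a controlled Jacobian, so that the kernels become comparable to the monatomic ones up to bounded factors depending on the internal-energy shifts $\widetilde\Delta I^{\alpha\beta}_{kl,ij}$.

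\textbf{Main obstacle.} The hard part is precisely the bookkeeping around the internal-energy gaps: the square-root factor $\sqrt{|\mathbf{g}|^2-2\widetilde\Delta I^{\alpha\beta}_{kl,ij}}$ appearing in the reduced $W_{\alpha\beta}$ is only real on $\{|\mathbf{g}|^2>2\widetilde\Delta I^{\alpha\beta}_{kl,ij}\}$, so the integration domains are $(\boldsymbol\xi,\boldsymbol\xi_\ast)$-dependent and can degenerate where $|\mathbf{g}|$ is close to the threshold, which is exactly where the factor $|\mathbf{g}'|^{-1}$ (or $|\mathbf{g}'|^{-2}$) in \eqref{tp} blows up. One must check that the bound \eqref{est1} — in which $\sigma$ is controlled by $C|\mathbf{g}|^{-2}(\Psi + \Psi^{\gamma/2})$ with $\Psi = |\mathbf{g}|\sqrt{|\mathbf{g}|^2-2\widetilde\Delta I^{\alpha\beta}_{kl,ij}}$ — is tuned so that these singular factors cancel (the $\Psi^{\gamma/2}$ term with $0<\gamma<1$ is what controls the near-threshold region), yielding locally integrable, and ultimately Hilbert--Schmidt / Schur-summable, kernels. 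Carrying this out uniformly over all $(\alpha,\beta,i,j,k,l)\in\Omega$, including the mixed species masses $m_\alpha\neq m_\beta$ and the degenerate resonant cases $\widetilde\Delta I^{\alpha\beta}_{kl,ij}=0$, is the technical core; everything else is a routine adaptation of the arguments in \cite{Be-21a,Be-21b} and the proof deferred to Sections \ref{PT1} and the appendix.
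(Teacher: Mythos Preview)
Your overall architecture matches the paper: integrate out the deltas in $W_{\alpha\beta}$, realize $K_{\alpha,i}$ as a sum of integral operators, show the loss-type kernel is Hilbert--Schmidt directly, and treat gain pieces via the Glassey/Drange approximately-HS criterion (Lemma~\ref{LGD}). Two points, however, are off.

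\textbf{The appendix lemma is not a change-of-variables lemma.} Lemma~\ref{L3} is an algebraic energy inequality: for $m_\alpha\neq m_\beta$, under the collision constraints one has
\[
m_\alpha|\boldsymbol\xi'|^2+m_\beta|\boldsymbol\xi'_\ast|^2\ \ge\ \rho\bigl(m_\alpha|\boldsymbol\xi|^2+m_\beta|\boldsymbol\xi_\ast|^2\bigr)-2\bigl|\Delta I^{\alpha\beta}_{kj,il}\bigr|,
\qquad \rho=\Bigl(\tfrac{\sqrt{m_\alpha}-\sqrt{m_\beta}}{\sqrt{m_\alpha}+\sqrt{m_\beta}}\Bigr)^{2}\in(0,1).
\]
Its role is to convert the Maxwellian factor $(M'_{\alpha,k}M'_{\beta,l\ast})^{1/2}$ --- which a priori decays only in the primed variables --- into Gaussian decay in both $\boldsymbol\xi$ and $\boldsymbol\xi_\ast$. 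No Jacobian is involved; it is a pointwise lower bound.

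\textbf{The gain part splits into two structurally different cases, which you conflate.} After relabeling so that $\boldsymbol\xi_\ast$ is always the argument of $h$, the paper distinguishes: (II) the kernel $k^{(\alpha)}_{\alpha\beta,ij}$, where $\boldsymbol\xi_\ast$ carries the \emph{same} species $a_\alpha$ as $\boldsymbol\xi$ --- this one is only approximately Hilbert--Schmidt and is handled by the truncation/Schur argument of Lemma~\ref{LGD}; and (III) the kernel $k^{(\beta,2)}_{\alpha\beta,ij}$, where $\boldsymbol\xi_\ast$ carries species $a_\beta$ with $m_\alpha\neq m_\beta$ --- this one is \emph{genuinely} Hilbert--Schmidt, and Lemma~\ref{L3} is precisely what makes $\int\!\!\int|k^{(\beta,2)}|^2\,d\boldsymbol\xi\,d\boldsymbol\xi_\ast<\infty$. (When $m_\alpha=m_\beta$, case (III) collapses to case (II).) Your plan to handle all gain kernels uniformly as approximately HS is not a priori wrong, but without the correct content of Lemma~\ref{L3} you have no mechanism to get joint decay in $(\boldsymbol\xi,\boldsymbol\xi_\ast)$ for the cross-species kernel, and neither the HS integral nor the Schur bound $\sup_{\boldsymbol\xi}\int|k^{(\beta,2)}|\,d\boldsymbol\xi_\ast$ is visibly finite without it.

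The ``main obstacle'' you flag (threshold degeneracy of $\sqrt{|\mathbf g|^2-2\widetilde\Delta I}$) is real but handled routinely by the form of \eqref{est1}; the genuine new ingredient for mixtures is the disparate-mass energy inequality above. Your self-adjointness argument ($K=\Lambda-\mathcal L$ bounded and symmetric) is fine once compactness is in hand, though the paper instead verifies the kernel symmetries $k^{(\alpha)}_{\alpha\beta,ij}(\boldsymbol\xi,\boldsymbol\xi_\ast)=k^{(\alpha)}_{\alpha\beta,ji}(\boldsymbol\xi_\ast,\boldsymbol\xi)$ and $k^{(\beta)}_{\alpha\beta,ij}(\boldsymbol\xi,\boldsymbol\xi_\ast)=k^{(\alpha)}_{\beta\alpha,ji}(\boldsymbol\xi_\ast,\boldsymbol\xi)$ directly.
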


Theorem \ref{Thm1} will be proven in Section $\ref{PT1}$.

\begin{corollary}
\label{Cor1}The linearized collision operator $\mathcal{L}$, with scattering
cross sections satisfying $\left( \ref{est1}\right) $, is a closed, densely
defined, self-adjoint operator on $\left( L^{2}\left( d\boldsymbol{\xi }%
\right) \right) ^{r}$.
\end{corollary}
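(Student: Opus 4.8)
The plan is to deduce Corollary~\ref{Cor1} from the decomposition $\mathcal{L}=\Lambda-K$ established in $\left(\ref{dec2}\right)$ together with the compactness of $K$ proved in Theorem~\ref{Thm1}. First I would recall that the multiplication operator $\Lambda(f)=\nu f$ is, as already noted just after $\left(\ref{nl1}\right)$, a closed, densely defined, self-adjoint operator on $\left(L^{2}\left(d\boldsymbol{\xi}\right)\right)^{r}$; its natural domain is $D(\Lambda)=\left\{f\in\mathfrak{h}:\nu f\in\mathfrak{h}\right\}$, which contains, e.g., all bounded compactly supported functions and is therefore dense. Next I would invoke Theorem~\ref{Thm1} to get that $K$ is a self-adjoint \emph{bounded} (indeed compact) operator on all of $\mathfrak{h}$, so in particular $D(K)=\mathfrak{h}$.

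The key step is then a perturbation argument: the sum of a closed, densely defined, self-adjoint operator and a bounded self-adjoint operator is again closed, densely defined, and self-adjoint, with domain unchanged. Concretely, $\mathcal{L}=\Lambda-K$ has $D(\mathcal{L})=D(\Lambda)\cap D(K)=D(\Lambda)$, which is dense; closedness of $\mathcal{L}$ follows because adding the bounded operator $-K$ cannot destroy closedness (if $h_{n}\to h$ and $\mathcal{L}h_{n}\to w$ with $h_{n}\in D(\Lambda)$, then $\Lambda h_{n}=\mathcal{L}h_{n}+Kh_{n}\to w+Kh$, so $h\in D(\Lambda)$ and $\Lambda h=w+Kh$, i.e. $\mathcal{L}h=w$); and self-adjointness is the classical Kato--Rellich-type statement that $A+B$ is self-adjoint on $D(A)$ whenever $A=A^{*}$ and $B=B^{*}$ is bounded (symmetry is immediate, and $(A+B)^{*}=A^{*}+B^{*}=A+B$ since $B$ is everywhere defined and bounded). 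Alternatively, symmetry and nonnegativity of $\mathcal{L}$ were already recorded in Proposition~\ref{Prop1}, so only the upgrade from symmetric to self-adjoint, via the bounded self-adjoint perturbation of the self-adjoint operator $\Lambda$, needs to be spelled out.

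I do not expect any genuine obstacle here: this is a short structural corollary, and the only point requiring a little care is making explicit that the domain of $\mathcal{L}$ is exactly $D(\Lambda)$ and that this set is dense, which I would state plainly (and which is precisely what Corollary~\ref{Cor1} of the referenced abstract half-space framework in \cite{Be-21} requires). The substantive work — compactness of $K$ — is entirely deferred to Theorem~\ref{Thm1}, proven in Section~\ref{PT1}, so this proof is essentially a one-paragraph citation of the standard fact that self-adjointness is stable under bounded self-adjoint perturbations.

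\begin{proof}
By $\left(\ref{dec2}\right)$, $\mathcal{L}=\Lambda-K$, where the multiplication operator $\Lambda(f)=\nu f$ is a closed, densely defined, self-adjoint operator on $\left(L^{2}\left(d\boldsymbol{\xi}\right)\right)^{r}$, with domain $D(\Lambda)=\left\{f\in\mathfrak{h}:\nu f\in\mathfrak{h}\right\}\supseteq C_{c}\left(\mathbb{R}^{3}\right)^{r}$ dense, while by Theorem~\ref{Thm1} the operator $K$ is a bounded (compact) self-adjoint operator on all of $\left(L^{2}\left(d\boldsymbol{\xi}\right)\right)^{r}$. Hence $D(\mathcal{L})=D(\Lambda)$ is dense. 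If $h_{n}\in D(\Lambda)$ with $h_{n}\to h$ and $\mathcal{L}h_{n}\to w$, then $\Lambda h_{n}=\mathcal{L}h_{n}+Kh_{n}\to w+Kh$ by boundedness of $K$, so by closedness of $\Lambda$ we get $h\in D(\Lambda)$ and $\Lambda h=w+Kh$, i.e. $\mathcal{L}h=w$; thus $\mathcal{L}$ is closed. Finally, since $\Lambda$ is self-adjoint and $-K$ is bounded and self-adjoint, $\mathcal{L}=\Lambda-K$ is self-adjoint, because it is symmetric and $\mathcal{L}^{*}=\Lambda^{*}-K^{*}=\Lambda-K=\mathcal{L}$ (the bounded term $-K$ being everywhere defined). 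Therefore $\mathcal{L}$ is a closed, densely defined, self-adjoint operator on $\left(L^{2}\left(d\boldsymbol{\xi}\right)\right)^{r}$.
\end{proof}
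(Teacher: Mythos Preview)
Your proposal is correct and follows essentially the same approach as the paper: decompose $\mathcal{L}=\Lambda-K$, use that $\Lambda$ is closed, densely defined, and self-adjoint, and then invoke Theorem~\ref{Thm1} to add the bounded self-adjoint perturbation $-K$ without altering the domain, closedness, or self-adjointness. The only difference is cosmetic---the paper cites Theorem~4.3 of Chapter~V in \cite{Kato} for the self-adjointness step, whereas you spell out the closedness and adjoint arguments explicitly.
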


\begin{proof}
By Theorem \ref{Thm1}, the linear operator $\mathcal{L}=\Lambda -K$ is
closed as the sum of a closed and a bounded operator, and densely defined,
since the domains of the linear operators $\mathcal{L}$ and $\Lambda $ are
equal; $D(\mathcal{L})=D(\Lambda )$. Furthermore, it is a self-adjoint
operator, since the set of self-adjoint operators is closed under addition
of bounded self-adjoint operators, see Theorem 4.3 of Chapter V in \cite%
{Kato}. 
\end{proof}

Now consider the scattering cross sections - cf. hard sphere models - 
\begin{eqnarray}
\sigma _{ij,kl}^{\alpha \beta } &=&C_{\alpha \beta }\dfrac{\sqrt{\left\vert 
\mathbf{g}\right\vert ^{2}-2\widetilde{\Delta }I_{kl,ij}^{\alpha \beta }}}{%
\left\vert \mathbf{g}\right\vert \varphi _{i}^{\alpha }\varphi _{j}^{\beta }}%
\text{ if }\left\vert \mathbf{g}\right\vert ^{2}>2\widetilde{\Delta }%
I_{kl,ij}^{\alpha \beta }\text{, where }  \notag \\
\widetilde{\Delta }I_{kl,ij}^{\alpha \beta } &=&\frac{m_{\alpha }+m_{\beta }%
}{m_{\alpha }m_{\beta }}\Delta I_{kl,ij}^{\alpha \beta }\text{,}  \label{e1}
\end{eqnarray}%
for some positive constants $C_{\alpha \beta }>0$ for all $\left( \alpha
,\beta ,i,j,k,l\right) \in \Omega $. Note that assumption $\left( \ref{e1}%
\right) $ reduces to the hard sphere model for monatomic multicomponent
mixtures \cite{Be-21a} in the case of vanishing internal energy gap and unit
weigths $\varphi _{1}^{1}=...=\varphi _{r_{1}}^{1}=...=\varphi
_{1}^{s}=...=\varphi _{r_{s}}^{s}=1$.

In fact, it would be enough with the bounds 
\begin{equation}
C_{-}\dfrac{\sqrt{\left\vert \mathbf{g}\right\vert ^{2}-2\widetilde{\Delta }%
I_{kl,ij}^{\alpha \beta }}}{\left\vert \mathbf{g}\right\vert \varphi
_{i}^{\alpha }\varphi _{j}^{\beta }}\leq \sigma _{ij,kl}^{\alpha \beta }\leq
C_{+}\dfrac{\sqrt{\left\vert \mathbf{g}\right\vert ^{2}-2\widetilde{\Delta }%
I_{kl,ij}^{\alpha \beta }}}{\left\vert \mathbf{g}\right\vert \varphi
_{i}^{\alpha }\varphi _{j}^{\beta }}\text{ if }\left\vert \mathbf{g}%
\right\vert ^{2}>2\widetilde{\Delta }I_{kl,ij}^{\alpha \beta }\text{,}
\label{ie1}
\end{equation}%
for some positive constants $C_{\pm }>0$, on the scattering cross sections.

\begin{theorem}
\label{Thm2} The linearized collision operator $\mathcal{L}$, with
scattering cross sections $\left( \ref{e1}\right) $ (or $\left( \ref{ie1}%
\right) $), can be split into a positive multiplication operator $\Lambda $,
defined by $\Lambda f=\nu f$, where $\nu =\nu (\left\vert \boldsymbol{\xi }%
\right\vert )=\mathrm{diag}\left( \nu _{1,1},...,\nu _{1,r_{1}},...,\nu
_{s,1},...,\nu _{s,r_{s}}\right) $, minus a compact operator $K$ on $\left(
L^{2}\left( d\boldsymbol{\xi }\right) \right) ^{r}$%
\begin{equation}
\mathcal{L}=\Lambda -K,  \label{dec3}
\end{equation}%
where there exist positive numbers $\nu _{-}$ and $\nu _{+}$, with $0<\nu
_{-}<\nu _{+}$, such that for any $\left( \alpha ,i\right) \in \left\{
1,...,s\right\} \times \left\{ 1,...,r\right\} $ 
\begin{equation}
\nu _{-}\left( 1+\left\vert \boldsymbol{\xi }\right\vert \right) \leq \nu
_{\alpha ,i}(\left\vert \boldsymbol{\xi }\right\vert )\leq \nu _{+}\left(
1+\left\vert \boldsymbol{\xi }\right\vert \right) \text{ for all }%
\boldsymbol{\xi }\in \mathbb{R}^{3}\text{.}  \label{ine1}
\end{equation}
\end{theorem}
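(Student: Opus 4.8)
The plan is to dispatch the structural part at once and then concentrate on the bounds for the collision frequency. Since the weights $\varphi _{i}^{\alpha }$ are fixed positive constants, the hard-sphere cross sections $\left( \ref{e1}\right) $ satisfy
\begin{equation*}
\left\vert \mathbf{g}\right\vert ^{2}\sigma _{ij,kl}^{\alpha \beta }=\frac{C_{\alpha \beta }}{\varphi _{i}^{\alpha }\varphi _{j}^{\beta }}\Psi _{ij,kl}^{\alpha \beta }\leq C\left( \Psi _{ij,kl}^{\alpha \beta }+\left( \Psi _{ij,kl}^{\alpha \beta }\right) ^{\gamma /2}\right)
\end{equation*}
for every $\gamma \in (0,1)$ and a suitable constant $C$, so assumption $\left( \ref{est1}\right) $ holds and Theorem $\ref{Thm1}$ gives that $K$ is a self-adjoint compact operator on $\left( L^{2}(d\boldsymbol{\xi })\right) ^{r}$; the decomposition $\left( \ref{dec3}\right) $ is then nothing but $\left( \ref{dec2}\right) $. (For the weaker assumption $\left( \ref{ie1}\right) $ the same computation applies, only the numerical constants change.) It therefore only remains to establish $\left( \ref{ine1}\right) $, which in particular shows $\nu _{\alpha ,i}>0$, so that $\Lambda $ is a genuine positive multiplication operator.

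To bound $\nu _{\alpha ,i}$, I would start from $\left( \ref{dec1}\right) $ and perform, inside the integral, the same reduction of the transition probability as in Section $\ref{S2.1}$: using the representation of $W_{\alpha \beta }$ via $\delta _{3}(\mathbf{G}_{\alpha \beta }-\mathbf{G}_{\alpha \beta }^{\prime })$ and $\delta _{1}(\cdots )$ together with $d\boldsymbol{\xi }^{\prime }d\boldsymbol{\xi }_{\ast }^{\prime }=\left\vert \mathbf{g}^{\prime }\right\vert ^{2}d\mathbf{G}_{\alpha \beta }^{\prime }d\left\vert \mathbf{g}^{\prime }\right\vert d\boldsymbol{\omega }$, the integrations over $\boldsymbol{\xi }^{\prime }$ and $\boldsymbol{\xi }_{\ast }^{\prime }$ collapse both Dirac measures (the $\varphi _{i}^{\alpha }\varphi _{j}^{\beta }$ produced cancels the one in the denominator of $\left( \ref{dec1}\right) $), and, after inserting $\left( \ref{e1}\right) $ and integrating over $\boldsymbol{\omega }\in S^{2}$ (on which $\sigma _{ij,kl}^{\alpha \beta }$ does not depend), one arrives at
\begin{equation*}
\nu _{\alpha ,i}=\sum_{\beta =1}^{s}\sum_{k=1}^{r_{\alpha }}\sum_{j,l=1}^{r_{\beta }}\frac{4\pi C_{\alpha \beta }}{\varphi _{i}^{\alpha }\varphi _{j}^{\beta }}\int_{\mathbb{R}^{3}}M_{\beta ,j\ast }\sqrt{\left\vert \mathbf{g}\right\vert ^{2}-2\widetilde{\Delta }I_{kl,ij}^{\alpha \beta }}\,\mathbf{1}_{\left\vert \mathbf{g}\right\vert ^{2}>2\widetilde{\Delta }I_{kl,ij}^{\alpha \beta }}\,d\boldsymbol{\xi }_{\ast }\text{,}
\end{equation*}
with $\mathbf{g}=\boldsymbol{\xi }-\boldsymbol{\xi }_{\ast }$; rotational invariance of the centered isotropic Maxwellians $M_{\beta ,j}$ shows this is a function of $\left\vert \boldsymbol{\xi }\right\vert $ only.

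For the upper bound, note that $\{\widetilde{\Delta }I_{kl,ij}^{\alpha \beta }:(\alpha ,\beta ,i,j,k,l)\in \Omega \}$ is finite, hence $\sqrt{\left\vert \mathbf{g}\right\vert ^{2}-2\widetilde{\Delta }I_{kl,ij}^{\alpha \beta }}\,\mathbf{1}_{\left\vert \mathbf{g}\right\vert ^{2}>2\widetilde{\Delta }I_{kl,ij}^{\alpha \beta }}\leq \left\vert \mathbf{g}\right\vert +c_{0}$ for a constant $c_{0}$ (treat $\widetilde{\Delta }I\geq 0$ and $\widetilde{\Delta }I<0$ separately, using subadditivity of the square root in the second case); then $\nu _{\alpha ,i}\leq C\sum_{\beta ,j}\int_{\mathbb{R}^{3}}M_{\beta ,j\ast }(1+\left\vert \boldsymbol{\xi }\right\vert +\left\vert \boldsymbol{\xi }_{\ast }\right\vert )\,d\boldsymbol{\xi }_{\ast }\leq \nu _{+}(1+\left\vert \boldsymbol{\xi }\right\vert )$ by finiteness of the zeroth and first moments of the Gaussians $M_{\beta ,j}$. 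For the lower bound I would discard all but the diagonal terms $k=i$, $l=j$, for which $\Delta I_{kl,ij}^{\alpha \beta }=0$ and the indicator is absent, giving
\begin{equation*}
\nu _{\alpha ,i}\geq \sum_{\beta =1}^{s}\sum_{j=1}^{r_{\beta }}\frac{4\pi C_{\alpha \beta }}{\varphi _{i}^{\alpha }\varphi _{j}^{\beta }}\int_{\mathbb{R}^{3}}M_{\beta ,j\ast }\left\vert \boldsymbol{\xi }-\boldsymbol{\xi }_{\ast }\right\vert \,d\boldsymbol{\xi }_{\ast }\text{,}
\end{equation*}
and then invoke the elementary lower bound $\int_{\mathbb{R}^{3}}M(\boldsymbol{\xi }_{\ast })\left\vert \boldsymbol{\xi }-\boldsymbol{\xi }_{\ast }\right\vert \,d\boldsymbol{\xi }_{\ast }\geq c(1+\left\vert \boldsymbol{\xi }\right\vert )$ for a centered Maxwellian (the triangle inequality $\int M\left\vert \boldsymbol{\xi }-\boldsymbol{\xi }_{\ast }\right\vert \geq \left\vert \boldsymbol{\xi }\right\vert \int M$ handles large $\left\vert \boldsymbol{\xi }\right\vert $, while positivity of $\int M\left\vert \boldsymbol{\xi }_{\ast }\right\vert \,d\boldsymbol{\xi }_{\ast }$ handles $\boldsymbol{\xi }$ near the origin) to conclude $\nu _{\alpha ,i}\geq \nu _{-}(1+\left\vert \boldsymbol{\xi }\right\vert )$. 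For $\left( \ref{ie1}\right) $ the same estimates go through with $C_{+}$ in the upper and $C_{-}$ in the lower bound.

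The Gaussian moment estimates are routine. The only point that really needs care — and the one I expect to be the main, if mild, obstacle — is the internal-energy gap: it may take both signs, it produces the cut $\left\vert \mathbf{g}\right\vert ^{2}>2\widetilde{\Delta }I_{kl,ij}^{\alpha \beta }$ in the $\boldsymbol{\xi }_{\ast }$-integral, and it is exactly the isolation of the gap-free diagonal terms $k=i$, $l=j$ that is needed to secure a strictly positive lower bound; the remaining bookkeeping parallels the monatomic-mixture case in \cite{Be-21a}.
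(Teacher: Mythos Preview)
Your proposal is correct and follows essentially the same route as the paper: reduce $\nu _{\alpha ,i}$ to a $\boldsymbol{\xi }_{\ast }$--integral of $M_{\beta ,j\ast }\sqrt{\left\vert \mathbf{g}\right\vert ^{2}-2\widetilde{\Delta }I_{kl,ij}^{\alpha \beta }}\,\mathbf{1}_{\left\vert \mathbf{g}\right\vert ^{2}>2\widetilde{\Delta }I_{kl,ij}^{\alpha \beta }}$, bound this by $C(1+\left\vert \boldsymbol{\xi }\right\vert +\left\vert \boldsymbol{\xi }_{\ast }\right\vert )$ from above, and keep only gap--free diagonal terms from below. The paper retains the single term $\beta =\alpha $, $j=k=l=i$ and then splits into $\left\vert \boldsymbol{\xi }\right\vert \leq 1$ and $\left\vert \boldsymbol{\xi }\right\vert \geq 1$ by hand, whereas your Jensen--type inequality $\int M_{\beta ,j\ast }\left\vert \boldsymbol{\xi }-\boldsymbol{\xi }_{\ast }\right\vert \,d\boldsymbol{\xi }_{\ast }\geq \left\vert \boldsymbol{\xi }\right\vert \int M_{\beta ,j\ast }\,d\boldsymbol{\xi }_{\ast }$ (valid because the Maxwellians are centered) is a slightly cleaner way to get the linear growth; otherwise the arguments coincide.
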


The decomposition $\left( \ref{dec3}\right) $ follows by the decomposition $%
\left( \ref{dec2}\right) ,\left( \ref{dec1}\right) $ and Theorem \ref{Thm1},
while the bounds $\left( \ref{ine1}\right) $ are proven in Section $\ref{PT2}
$.

\begin{corollary}
\label{Cor2}The linearized collision operator $\mathcal{L}$, with scattering
cross sections $\left( \ref{e1}\right) $ (or $\left( \ref{ie1}\right) $), is
a Fredholm operator, with domain%
\begin{equation*}
D(\mathcal{L})=\left( L^{2}\left( \left( 1+\left\vert \boldsymbol{\xi }%
\right\vert \right) d\boldsymbol{\xi }\right) \right) ^{r}\text{.}
\end{equation*}
\end{corollary}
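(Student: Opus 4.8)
The plan is to read off everything from Theorem \ref{Thm2}, which furnishes the decomposition $\mathcal{L}=\Lambda -K$ with $K$ a compact (hence bounded) operator on $\left( L^{2}\left( d\boldsymbol{\xi }\right) \right) ^{r}$ and with the collision frequencies subject to the two-sided bound $\left( \ref{ine1}\right) $.

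First I would analyze the multiplication operator $\Lambda $. Since $\nu _{\alpha ,i}(\left\vert \boldsymbol{\xi }\right\vert )\geq \nu _{-}\left( 1+\left\vert \boldsymbol{\xi }\right\vert \right) \geq \nu _{-}>0$ for every $\left( \alpha ,i\right) $, the reciprocals $\nu _{\alpha ,i}^{-1}$ are uniformly bounded by $\nu _{-}^{-1}$, so multiplication by $\nu ^{-1}=\mathrm{diag}\left( \nu _{1,1}^{-1},\dots ,\nu _{s,r_{s}}^{-1}\right) $ is a bounded self-adjoint operator and is a two-sided inverse of $\Lambda $. Hence $\Lambda :D(\Lambda )\rightarrow \left( L^{2}\left( d\boldsymbol{\xi }\right) \right) ^{r}$ is bijective with $\ker \Lambda =\left\{ 0\right\} $ and closed range; in particular $\Lambda $ is a closed Fredholm operator of index $0$. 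Moreover $D(\Lambda )=\left\{ f:\nu f\in \left( L^{2}\left( d\boldsymbol{\xi }\right) \right) ^{r}\right\} $, and the two-sided bound $\left( \ref{ine1}\right) $ identifies this set — the graph norm of $\Lambda $ being equivalent to the associated weighted norm — with the weighted $L^{2}$-space $\left( L^{2}\left( \left( 1+\left\vert \boldsymbol{\xi }\right\vert \right) d\boldsymbol{\xi }\right) \right) ^{r}$ appearing in the statement.

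Next, since $K$ is defined and bounded on all of $\left( L^{2}\left( d\boldsymbol{\xi }\right) \right) ^{r}$, passing from $\Lambda $ to $\mathcal{L}=\Lambda -K$ changes neither the domain — so $D(\mathcal{L})=D(\Lambda )=\left( L^{2}\left( \left( 1+\left\vert \boldsymbol{\xi }\right\vert \right) d\boldsymbol{\xi }\right) \right) ^{r}$, the asserted domain — nor the Fredholm property, the class of closed Fredholm operators being stable under compact perturbations (see, e.g., \cite{Kato}). The cleanest way to make this last step airtight is to factor: $\mathcal{L}\Lambda ^{-1}=I-K\Lambda ^{-1}$ is bounded on $\left( L^{2}\left( d\boldsymbol{\xi }\right) \right) ^{r}$ with $K\Lambda ^{-1}$ compact, hence Fredholm of index $0$ by Riesz--Schauder, and composing with the bounded bijection $\Lambda $ yields that $\mathcal{L}$ is Fredholm of index $0$ — consistent with $\mathcal{L}$ being self-adjoint (Corollary \ref{Cor1}) with the finite-dimensional kernel of Proposition \ref{Prop1}. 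The only point requiring care is this perturbation argument for an unbounded closed operator, which the factorization circumvents; everything else is routine.
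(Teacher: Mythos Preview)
Your proof is correct and follows essentially the same route as the paper's: Theorem \ref{Thm2} makes $\Lambda$ Fredholm (the paper phrases this as ``coercive, hence Fredholm'' while you argue invertibility directly), Kato's stability of the Fredholm class under compact perturbations then yields the result for $\mathcal{L}$, and the domain identification comes from the two-sided bound $\left( \ref{ine1}\right)$. Your factorization $\mathcal{L}\Lambda^{-1}=I-K\Lambda^{-1}$ is a nice explicit alternative to the bare citation of Kato for the unbounded case, but it is an elaboration rather than a different approach.
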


\begin{proof}
By Theorem \ref{Thm2} the multiplication operator $\Lambda $ is coercive
and, hence, a Fredholm operator. The set of Fredholm operators is closed
under addition of compact operators, see Theorem 5.26 of Chapter IV in \cite%
{Kato} and its proof, so, by Theorem \ref{Thm2}, $\mathcal{L}$ is a Fredholm
operator. 

Moreover, by Theorem \ref{Thm2}, $D(\mathcal{L})=D(\Lambda )=\left(
L^{2}\left( \left( 1+\left\vert \boldsymbol{\xi }\right\vert \right) d%
\boldsymbol{\xi }\right) \right) ^{r}$.
\end{proof}

\begin{corollary}
\label{Cor3}For the linearized collision operator $\mathcal{L}$, with
scattering cross sections $\left( \ref{e1}\right) $ (or $\left( \ref{ie1}%
\right) $), there exists a positive number $\lambda $, $0<\lambda <1$, such
that 
\begin{equation*}
\left( h,\mathcal{L}h\right) \geq \lambda \left( h,\nu (\left\vert 
\boldsymbol{\xi }\right\vert )h\right) \geq \lambda \nu _{-}\left( h,\left(
1+\left\vert \boldsymbol{\xi }\right\vert \right) h\right)
\end{equation*}%
for any $h\in \left( L^{2}(\left( 1+\left\vert \boldsymbol{\xi }\right\vert
\right) d\boldsymbol{\xi })\right) ^{r}\cap \mathrm{\mathrm{Im}}\mathcal{L}$.
\end{corollary}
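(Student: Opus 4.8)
The plan is to prove the two inequalities separately: the first, $\left( h,\mathcal{L}h\right) \geq \lambda \left( h,\nu (\left\vert \boldsymbol{\xi }\right\vert )h\right) $, by a weak‑compactness/contradiction argument, and the second, $\left( h,\nu (\left\vert \boldsymbol{\xi }\right\vert )h\right) \geq \nu _{-}\left( h,\left( 1+\left\vert \boldsymbol{\xi }\right\vert \right) h\right) $, directly from the pointwise lower bound in $\left( \ref{ine1}\right) $.

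First I would collect the structure already available: by Corollary \ref{Cor1} the operator $\mathcal{L}=\Lambda -K$ is self-adjoint on $\mathfrak{h}$; by Proposition \ref{Prop1} it is nonnegative with finite-dimensional kernel $\ker \mathcal{L}$; by Theorem \ref{Thm1} the operator $K$ is compact; by Theorem \ref{Thm2} one has $\nu _{-}\left( 1+\left\vert \boldsymbol{\xi }\right\vert \right) \leq \nu _{\alpha ,i}\leq \nu _{+}\left( 1+\left\vert \boldsymbol{\xi }\right\vert \right) $, so in particular $\Lambda \geq \nu _{-}I$; and by Corollary \ref{Cor2}, $D(\mathcal{L})=\left( L^{2}\left( \left( 1+\left\vert \boldsymbol{\xi }\right\vert \right) d\boldsymbol{\xi }\right) \right) ^{r}$. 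Since $\mathcal{L}$ is symmetric, $\mathrm{Im}\,\mathcal{L}\subseteq \left( \ker \mathcal{L}\right) ^{\perp }$, so it suffices to find $\lambda _{0}>0$ with $\left( h,\mathcal{L}h\right) \geq \lambda _{0}\left( h,\Lambda h\right) $ for every $h\in D(\mathcal{L})\cap \left( \ker \mathcal{L}\right) ^{\perp }$; then, since $\left( h,\Lambda h\right) \geq 0$, the constant $\lambda =\min (\lambda _{0},1/2)\in (0,1)$ works as well, and $\left( h,\Lambda h\right) =\left( h,\nu (\left\vert \boldsymbol{\xi }\right\vert )h\right) $.

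To produce $\lambda _{0}$ I would argue by contradiction: if no such constant existed, there would be $h_{n}\in D(\mathcal{L})\cap \left( \ker \mathcal{L}\right) ^{\perp }$ with $\left( h_{n},\Lambda h_{n}\right) =1$ and $\left( h_{n},\mathcal{L}h_{n}\right) \rightarrow 0$. From $\Lambda \geq \nu _{-}I$ the sequence $\left( h_{n}\right) $ is bounded in $\mathfrak{h}$, so along a subsequence $h_{n}\rightharpoonup h_{0}$ weakly in $\mathfrak{h}$, and $h_{0}\in \left( \ker \mathcal{L}\right) ^{\perp }$ since that subspace is weakly closed. Writing $\left( h_{n},\mathcal{L}h_{n}\right) =\left( h_{n},\Lambda h_{n}\right) -\left( h_{n},Kh_{n}\right) =1-\left( h_{n},Kh_{n}\right) $ gives $\left( h_{n},Kh_{n}\right) \rightarrow 1$; compactness of $K$ yields $Kh_{n}\rightarrow Kh_{0}$ strongly, hence $\left( h_{n},Kh_{n}\right) \rightarrow \left( h_{0},Kh_{0}\right) $ by the weak--strong pairing, so $\left( h_{0},Kh_{0}\right) =1$ and in particular $h_{0}\neq 0$. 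On the other hand $\left\Vert \mathcal{L}^{1/2}h_{n}\right\Vert ^{2}=\left( h_{n},\mathcal{L}h_{n}\right) \rightarrow 0$; since the graph of the closed operator $\mathcal{L}^{1/2}$ is weakly closed and $\left( h_{n},\mathcal{L}^{1/2}h_{n}\right) \rightharpoonup \left( h_{0},0\right) $ in $\mathfrak{h}\times \mathfrak{h}$, it follows that $h_{0}\in D(\mathcal{L}^{1/2})$ with $\mathcal{L}^{1/2}h_{0}=0$, i.e. $h_{0}\in \ker \mathcal{L}$. Hence $h_{0}\in \ker \mathcal{L}\cap \left( \ker \mathcal{L}\right) ^{\perp }=\left\{ 0\right\} $, contradicting $h_{0}\neq 0$.

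Finally, for $h\in \left( L^{2}\left( \left( 1+\left\vert \boldsymbol{\xi }\right\vert \right) d\boldsymbol{\xi }\right) \right) ^{r}\cap \mathrm{Im}\,\mathcal{L}\subseteq D(\mathcal{L})\cap \left( \ker \mathcal{L}\right) ^{\perp }$ the above yields $\left( h,\mathcal{L}h\right) \geq \lambda \left( h,\nu (\left\vert \boldsymbol{\xi }\right\vert )h\right) $, and the termwise bound $\nu _{\alpha ,i}(\left\vert \boldsymbol{\xi }\right\vert )\geq \nu _{-}\left( 1+\left\vert \boldsymbol{\xi }\right\vert \right) $ from $\left( \ref{ine1}\right) $ gives $\left( h,\nu (\left\vert \boldsymbol{\xi }\right\vert )h\right) \geq \nu _{-}\left( h,\left( 1+\left\vert \boldsymbol{\xi }\right\vert \right) h\right) $, which completes the chain. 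The one genuinely non-formal step is the identification of the weak limit $h_{0}$ as an element of $\ker \mathcal{L}$: this is where self-adjointness and nonnegativity of $\mathcal{L}$ are used (equivalently, the weak lower semicontinuity of its quadratic form, or the weak closedness of the graph of $\mathcal{L}^{1/2}$), and it is precisely what prevents the spectrum of $\mathcal{L}$ from accumulating at $0$.
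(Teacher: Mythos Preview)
Your proof is correct, but it takes a different route from the paper's. The paper argues directly: since $\mathcal{L}$ is Fredholm (Corollary~\ref{Cor2}), it has closed range, and together with self-adjointness and nonnegativity this gives a spectral gap $(h,\mathcal{L}h)\geq\nu_0(h,h)$ on $(\ker\mathcal{L})^\perp$; boundedness of the compact operator $K$ gives $(h,Kh)\leq c_K(h,h)$. Writing $(h,\mathcal{L}h)=(1-\lambda)(h,\mathcal{L}h)+\lambda(h,\nu h)-\lambda(h,Kh)$ and choosing $\lambda=\nu_0/(\nu_0+c_K)$ makes the non-$\nu$ terms vanish, yielding $(h,\mathcal{L}h)\geq\lambda(h,\nu h)$ in one line. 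Your weak-compactness/contradiction argument instead works directly with the ratio $(h,\mathcal{L}h)/(h,\Lambda h)$, using compactness of $K$ to pass to the limit in $(h_n,Kh_n)$ and weak closedness of the graph of $\mathcal{L}^{1/2}$ to force the weak limit into $\ker\mathcal{L}$. The paper's approach is shorter and produces an explicit $\lambda$ in terms of the gap $\nu_0$ and the operator norm $c_K$; your approach is more self-contained (it does not invoke the Fredholm corollary as a black box) and illustrates why the coercivity holds, at the cost of a non-constructive $\lambda$.
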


\begin{proof}
Let $h\in \left( L^{2}(\left( 1+\left\vert \boldsymbol{\xi }\right\vert
\right) d\boldsymbol{\xi })\right) ^{r}\cap \left( \mathrm{ker}\mathcal{L}%
\right) ^{\perp }=\left( L^{2}(\left( 1+\left\vert \boldsymbol{\xi }%
\right\vert \right) d\boldsymbol{\xi })\right) ^{r}\cap \mathrm{\mathrm{Im}}%
\mathcal{L}$. As a Fredholm operator, $\mathcal{L}$ is closed with a closed
range, and as a compact operator, $K$ is bounded, and so there are positive
constants $\nu _{0}>0$ and $c_{K}>0$, such that%
\begin{equation*}
(h,\mathcal{L}h)\geq \nu _{0}(h,h)\text{ and }(h,Kh)\leq c_{K}(h,h).
\end{equation*}%
Let $\lambda =\dfrac{\nu _{0}}{\nu _{0}+c_{K}}$. Then%
\begin{eqnarray*}
(h,\mathcal{L}h) &=&(1-\lambda )(h,\mathcal{L}h)+\lambda (h,(\nu (\left\vert 
\boldsymbol{\xi }\right\vert )-K)h) \\
&\geq &(1-\lambda )\nu _{0}(h,h)+\lambda (h,\nu (\left\vert \boldsymbol{\xi }%
\right\vert )h)-\lambda c_{K}(h,h) \\
&=&(\nu _{0}-\lambda (\nu _{0}+c_{K}))(h,h)+\lambda (h,\nu (\left\vert 
\boldsymbol{\xi }\right\vert )h)=\lambda (h,\nu (\left\vert \boldsymbol{\xi }%
\right\vert )h)\text{.}
\end{eqnarray*}
\end{proof}

\begin{remark}
By Proposition $\ref{Prop1}$ and Corollary $\ref{Cor1}-\ref{Cor3}$, for hard
sphere like models the linearized operator $\mathcal{L}$ fulfills the
properties assumed on the linear operators in \cite{Be-21}, and hence, the
results therein can be applied for hard sphere like models.
\end{remark}

\section{Compactness \label{PT1}}

This section concerns the proof of Theorem \ref{Thm1}.

Note that in the proof the kernels are rewritten in such a way that $%
\boldsymbol{\xi }_{\ast }$ - and not $\boldsymbol{\xi }^{\prime }$ and $%
\boldsymbol{\xi }_{\ast }^{\prime }$ - always will be an argument of the
distribution functions. As for single species, either $\boldsymbol{\xi }%
_{\ast }$ is an argument in the loss term (like $\boldsymbol{\xi }$) or in
the gain term (unlike $\boldsymbol{\xi }$) of the collision operator.
However,\ in the latter case, unlike for single species, for mixtures one
have to differ between two different cases (considering interspecies
collision operators); either $\boldsymbol{\xi }_{\ast }$ is the velocity of
particles of the same species as the particles with velocity $\boldsymbol{%
\xi }$, or not. The kernels of the terms from the loss part of the collision
operator will be shown\ to be Hilbert-Schmidt in a quite direct way. Some of
the terms - for which $\boldsymbol{\xi }_{\ast }$ is the velocity of
particles of the same species as the particles with velocity $\boldsymbol{%
\xi }$ - of the gain parts of the collision operators will be shown to be
uniform limits of Hilbert-Schmidt integral operators, i.e. approximately
Hilbert-Schmidt integral operators in the sense of Lemma \ref{LGD}. By
applying the following lemma, Lemma \ref{L3}, (for disparate masses), which
is a generalization of corresponding lemma for monatomic mixtures by Boudin
et al in \cite{BGPS-13}, see also \cite{Be-21a}, it will be shown that the
kernels of the remaining terms - for which $\boldsymbol{\xi }_{\ast }$ is
the velocity of particles of a species different to the species of the
particles with velocity $\boldsymbol{\xi }$ - from the gain parts of the
collision operators, are Hilbert-Schmidt.

Denote, for any (non-zero) natural number $N$,%
\begin{equation*}
\mathfrak{h}_{N}:=\left\{ (\boldsymbol{\xi },\boldsymbol{\xi }_{\ast })\in
\left( \mathbb{R}^{3}\right) ^{2}:\left\vert \boldsymbol{\xi }-\boldsymbol{%
\xi }_{\ast }\right\vert \geq \frac{1}{N}\text{; }\left\vert \boldsymbol{\xi 
}\right\vert \leq N\right\}
\end{equation*}%
and%
\begin{equation*}
b^{(N)}=b^{(N)}(\boldsymbol{\xi },\boldsymbol{\xi }_{\ast }):=b(\boldsymbol{%
\xi },\boldsymbol{\xi }_{\ast })\mathbf{1}_{\mathfrak{h}_{N}}\text{.}
\end{equation*}%
Then we have the following lemma from \cite{Glassey}, that will be of
practical use for us to obtain compactness in this section.

\begin{lemma}
\label{LGD} (Glassey \cite[Lemma 3.5.1]{Glassey}, Drange \cite{Dr-75})

Assume that $b(\boldsymbol{\xi },\boldsymbol{\xi }_{\ast })\geq 0$ and let $%
Tf\left( \boldsymbol{\xi }\right) =\int_{\mathbb{R}^{3}}b(\boldsymbol{\xi },%
\boldsymbol{\xi }_{\ast })f\left( \boldsymbol{\xi }_{\ast }\right) \,d%
\boldsymbol{\xi }_{\ast }$.

Then $T$ is compact on $L^{2}\left( d\boldsymbol{\xi \,}\right) $ if

(i) $\int_{\mathbb{R}^{3}}b(\boldsymbol{\xi },\boldsymbol{\xi }_{\ast })\,d%
\boldsymbol{\xi }$ is bounded in $\boldsymbol{\xi }_{\ast }$;

(ii) $b^{(N)}\in L^{2}\left( d\boldsymbol{\xi \,}d\boldsymbol{\xi }_{\ast
}\right) $ for any (non-zero) natural number $N$;

(iii) $\underset{\boldsymbol{\xi }\in \mathbb{R}^{3}}{\sup }\int_{\mathbb{R}%
^{3}}b(\boldsymbol{\xi },\boldsymbol{\xi }_{\ast })-b^{(N)}(\boldsymbol{\xi }%
,\boldsymbol{\xi }_{\ast })\,d\boldsymbol{\xi }_{\ast }\rightarrow 0$ as $%
N\rightarrow \infty $.
\end{lemma}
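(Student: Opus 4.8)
The plan is to realize $T$ as the operator-norm limit of a sequence of Hilbert--Schmidt integral operators and then invoke the fact that the compact operators on $L^{2}\left( d\boldsymbol{\xi }\right) $ form a norm-closed subspace of the bounded operators. For each (non-zero) natural number $N$ introduce
\begin{equation*}
T_{N}f\left( \boldsymbol{\xi }\right) =\int_{\mathbb{R}^{3}}b^{(N)}(\boldsymbol{\xi },\boldsymbol{\xi }_{\ast })f\left( \boldsymbol{\xi }_{\ast }\right) \,d\boldsymbol{\xi }_{\ast }\text{.}
\end{equation*}
By hypothesis (ii), $b^{(N)}\in L^{2}\left( d\boldsymbol{\xi }\,d\boldsymbol{\xi }_{\ast }\right) $, so $T_{N}$ is a Hilbert--Schmidt integral operator, hence bounded and compact on $L^{2}\left( d\boldsymbol{\xi }\right) $.

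The core of the argument is the estimate $\left\Vert T-T_{N}\right\Vert \rightarrow 0$ as $N\rightarrow \infty $. Set $k_{N}:=b-b^{(N)}$; since $b\geq 0$ and $0\leq b^{(N)}\leq b$, we have $0\leq k_{N}\leq b$ and $(T-T_{N})f\left( \boldsymbol{\xi }\right) =\int_{\mathbb{R}^{3}}k_{N}(\boldsymbol{\xi },\boldsymbol{\xi }_{\ast })f\left( \boldsymbol{\xi }_{\ast }\right) \,d\boldsymbol{\xi }_{\ast }$. Splitting $k_{N}=k_{N}^{1/2}\cdot k_{N}^{1/2}$ and applying the Cauchy--Schwarz inequality in $\boldsymbol{\xi }_{\ast }$ yields
\begin{equation*}
\left\vert (T-T_{N})f\left( \boldsymbol{\xi }\right) \right\vert ^{2}\leq \left( \int_{\mathbb{R}^{3}}k_{N}(\boldsymbol{\xi },\boldsymbol{\xi }_{\ast })\,d\boldsymbol{\xi }_{\ast }\right) \left( \int_{\mathbb{R}^{3}}k_{N}(\boldsymbol{\xi },\boldsymbol{\xi }_{\ast })\left\vert f\left( \boldsymbol{\xi }_{\ast }\right) \right\vert ^{2}\,d\boldsymbol{\xi }_{\ast }\right) \text{.}
\end{equation*}
By hypothesis (iii) the first factor is at most some $\varepsilon _{N}$ with $\varepsilon _{N}\rightarrow 0$, uniformly in $\boldsymbol{\xi }$. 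Integrating over $\boldsymbol{\xi }$, interchanging the order of integration by Tonelli's theorem, and using $\int_{\mathbb{R}^{3}}k_{N}(\boldsymbol{\xi },\boldsymbol{\xi }_{\ast })\,d\boldsymbol{\xi }\leq \int_{\mathbb{R}^{3}}b(\boldsymbol{\xi },\boldsymbol{\xi }_{\ast })\,d\boldsymbol{\xi }\leq C$ (finite by hypothesis (i), since $k_{N}\leq b$), one obtains $\left\Vert (T-T_{N})f\right\Vert ^{2}\leq C\varepsilon _{N}\left\Vert f\right\Vert ^{2}$, that is, $\left\Vert T-T_{N}\right\Vert \leq \sqrt{C\varepsilon _{N}}\rightarrow 0$.

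Since each $T_{N}$ is compact and $T_{N}\rightarrow T$ in operator norm, $T$ is compact; note also that $T=T_{N}+\left( T-T_{N}\right) $ exhibits $T$ as bounded, so no a priori boundedness of $T$ need be assumed. I do not expect a genuine obstacle here: the computation is a variant of the Schur test, combined with the norm-closedness of the ideal of compact operators. The only point calling for a little care is the order in which the hypotheses are used — hypothesis (iii) controls $k_{N}$ only after integration in $\boldsymbol{\xi }_{\ast }$, so Tonelli's theorem is needed to recast the remaining double integral into a shape where hypothesis (i), which controls $\int b\,d\boldsymbol{\xi }$, can be applied.
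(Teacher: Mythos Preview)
Your argument is correct and is precisely the approach the paper indicates: it states that $T$ is the uniform limit of Hilbert--Schmidt integral operators and refers to Glassey for the details, and your Schur-type estimate via Cauchy--Schwarz together with Tonelli is exactly how that uniform convergence is established. There is nothing to add.
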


Then the operator $T$ is the uniform limit of Hilbert-Schmidt integral
operators, and we say that the kernel $b(\boldsymbol{\xi },\boldsymbol{\xi }%
_{\ast })$ is approximately Hilbert-Schmidt, while $T$ is an approximately
Hilbert-Schmidt integral operator. The reader is referred to Lemma 3.5.1 in 
\cite{Glassey} for a proof.

\begin{lemma}
\label{L3} \cite{BGPS-13} For $\left( \alpha ,\beta ,i,j,k,l\right) \in
\Omega $, assume that $m_{\alpha }\neq m_{\beta }$, 
\begin{equation}
\left\{ 
\begin{array}{l}
\boldsymbol{\xi }^{\prime }=\boldsymbol{\xi }-\left\vert \boldsymbol{\xi }-%
\boldsymbol{\xi }^{\prime }\right\vert \boldsymbol{\eta } \\ 
\boldsymbol{\xi }_{\ast }^{\prime }=\boldsymbol{\xi }_{\ast }-\dfrac{%
m_{\alpha }}{m_{\beta }}\left\vert \boldsymbol{\xi }-\boldsymbol{\xi }%
^{\prime }\right\vert \boldsymbol{\eta }%
\end{array}%
\right. \text{, where }\boldsymbol{\eta }\in \mathbb{S}^{2}\text{,}
\label{vrel2}
\end{equation}%
and%
\begin{eqnarray}
m_{\alpha }\frac{\left\vert \boldsymbol{\xi }\right\vert ^{2}}{2}+m_{\beta }%
\frac{\left\vert \boldsymbol{\xi }_{\ast }^{\prime }\right\vert ^{2}}{2}
&=&m_{\alpha }\frac{\left\vert \boldsymbol{\xi }^{\prime }\right\vert ^{2}}{2%
}+m_{\beta }\frac{\left\vert \boldsymbol{\xi }_{\ast }\right\vert ^{2}}{2}%
+\Delta I_{kj,il}^{\alpha \beta }\text{, with}  \notag \\
\Delta I_{kj,il}^{\alpha \beta } &=&I_{k}^{\alpha }+I_{j}^{\beta
}-I_{i}^{\alpha }-I_{l}^{\beta }\text{.}  \label{vrel3}
\end{eqnarray}%
Then there exists a positive number $\rho $, $0<\rho <1$, such that%
\begin{eqnarray*}
m_{\alpha }\left\vert \boldsymbol{\xi }^{\prime }\right\vert ^{2}+m_{\beta
}\left\vert \boldsymbol{\xi }_{\ast }^{\prime }\right\vert ^{2} &\geq &\rho
\left( m_{\alpha }\left\vert \boldsymbol{\xi }\right\vert ^{2}+m_{\beta
}\left\vert \boldsymbol{\xi }_{\ast }\right\vert ^{2}\right) +\left( 1+\rho
\right) \frac{m_{\alpha }-m_{\beta }}{m_{\alpha }+m_{\beta }}\Delta
I_{kj,il}^{\alpha \beta } \\
&\geq &\rho \left( m_{\alpha }\left\vert \boldsymbol{\xi }\right\vert
^{2}+m_{\beta }\left\vert \boldsymbol{\xi }_{\ast }\right\vert ^{2}\right)
-2\left\vert \Delta I_{kj,il}^{\alpha \beta }\right\vert \text{.}
\end{eqnarray*}
\end{lemma}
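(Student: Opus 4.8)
The plan is to pass to centre-of-mass variables, reduce the energy relation $\left( \ref{vrel3}\right) $ to a single scalar identity, and then absorb the one term that remains by a Cauchy--Schwarz estimate. Write $\Delta :=\Delta I_{kj,il}^{\alpha \beta }$, $\mu :=m_{\alpha }m_{\beta }/\left( m_{\alpha }+m_{\beta }\right) $, $\mathbf{G}:=\left( m_{\alpha }\boldsymbol{\xi }+m_{\beta }\boldsymbol{\xi }_{\ast }\right) /\left( m_{\alpha }+m_{\beta }\right) $, $\mathbf{g}:=\boldsymbol{\xi }-\boldsymbol{\xi }_{\ast }$, and let $\mathbf{G}^{\prime },\mathbf{g}^{\prime }$ be the analogous quantities formed from $\boldsymbol{\xi }^{\prime },\boldsymbol{\xi }_{\ast }^{\prime }$. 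By $\left( \ref{vrel2}\right) $ one has $\boldsymbol{\xi }-\boldsymbol{\xi }^{\prime }=\left\vert \boldsymbol{\xi }-\boldsymbol{\xi }^{\prime }\right\vert \boldsymbol{\eta }=:\mathbf{w}$, so $\boldsymbol{\xi }^{\prime }=\boldsymbol{\xi }-\mathbf{w}$, $\boldsymbol{\xi }_{\ast }^{\prime }=\boldsymbol{\xi }_{\ast }-\left( m_{\alpha }/m_{\beta }\right) \mathbf{w}$, and hence directly
\[
\mathbf{G}^{\prime }=\mathbf{G}-\frac{2m_{\alpha }}{m_{\alpha }+m_{\beta }}\mathbf{w},\qquad \mathbf{g}^{\prime }=\mathbf{g}+\frac{m_{\alpha }-m_{\beta }}{m_{\beta }}\mathbf{w}.
\]
First, expanding $\left( \ref{vrel3}\right) $ (multiplied by $2$) with the help of these relations collapses it to the scalar relation $2m_{\alpha }\mathbf{g}\cdot \mathbf{w}+m_{\alpha }\dfrac{m_{\alpha }-m_{\beta }}{m_{\beta }}\left\vert \mathbf{w}\right\vert ^{2}=2\Delta $; substituting this into the expansion of $\left\vert \mathbf{g}^{\prime }\right\vert ^{2}$ yields the key identity
\[
\left\vert \mathbf{g}^{\prime }\right\vert ^{2}=\left\vert \mathbf{g}\right\vert ^{2}+\frac{2\left( m_{\alpha }-m_{\beta }\right) }{m_{\alpha }m_{\beta }}\Delta .
\]

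Next, I would combine the standard decomposition $m_{\alpha }\left\vert \boldsymbol{\xi }\right\vert ^{2}+m_{\beta }\left\vert \boldsymbol{\xi }_{\ast }\right\vert ^{2}=\left( m_{\alpha }+m_{\beta }\right) \left\vert \mathbf{G}\right\vert ^{2}+\mu \left\vert \mathbf{g}\right\vert ^{2}$ (and its primed version) with the identity above and with $\mu \cdot 2\left( m_{\alpha }-m_{\beta }\right) /\left( m_{\alpha }m_{\beta }\right) =2\left( m_{\alpha }-m_{\beta }\right) /\left( m_{\alpha }+m_{\beta }\right) $, obtaining
\[
m_{\alpha }\left\vert \boldsymbol{\xi }^{\prime }\right\vert ^{2}+m_{\beta }\left\vert \boldsymbol{\xi }_{\ast }^{\prime }\right\vert ^{2}=\left( m_{\alpha }+m_{\beta }\right) \left\vert \mathbf{G}^{\prime }\right\vert ^{2}+\mu \left\vert \mathbf{g}\right\vert ^{2}+\frac{2\left( m_{\alpha }-m_{\beta }\right) }{m_{\alpha }+m_{\beta }}\Delta ,
\]
so the asserted inequality becomes equivalent to
\[
\left( m_{\alpha }+m_{\beta }\right) \left\vert \mathbf{G}^{\prime }\right\vert ^{2}+\left( 1-\rho \right) \mu \left\vert \mathbf{g}\right\vert ^{2}+\left( 1-\rho \right) \frac{m_{\alpha }-m_{\beta }}{m_{\alpha }+m_{\beta }}\Delta \ \geq \ \rho \left( m_{\alpha }+m_{\beta }\right) \left\vert \mathbf{G}\right\vert ^{2}.
\]
The only quantity here not already present on the left is $\left\vert \mathbf{G}\right\vert ^{2}$, and $\mathbf{G}$ is otherwise unconstrained, so I would eliminate it by $\mathbf{G}=\mathbf{G}^{\prime }+\left( 2m_{\alpha }/\left( m_{\alpha }+m_{\beta }\right) \right) \mathbf{w}$, giving $\left\vert \mathbf{G}\right\vert ^{2}\leq 2\left\vert \mathbf{G}^{\prime }\right\vert ^{2}+\dfrac{8m_{\alpha }^{2}}{\left( m_{\alpha }+m_{\beta }\right) ^{2}}\left\vert \mathbf{w}\right\vert ^{2}$, and then bound $\left\vert \mathbf{w}\right\vert ^{2}$. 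Here the hypothesis $m_{\alpha }\neq m_{\beta }$ is essential: solving $\mathbf{w}=\dfrac{m_{\beta }}{m_{\alpha }-m_{\beta }}\left( \mathbf{g}^{\prime }-\mathbf{g}\right) $ and using the key identity,
\[
\left\vert \mathbf{w}\right\vert ^{2}\leq \frac{2m_{\beta }^{2}}{\left( m_{\alpha }-m_{\beta }\right) ^{2}}\left( \left\vert \mathbf{g}\right\vert ^{2}+\left\vert \mathbf{g}^{\prime }\right\vert ^{2}\right) =\frac{4m_{\beta }^{2}}{\left( m_{\alpha }-m_{\beta }\right) ^{2}}\left\vert \mathbf{g}\right\vert ^{2}+\frac{4m_{\beta }}{m_{\alpha }\left( m_{\alpha }-m_{\beta }\right) }\Delta .
\]
Substituting both bounds and taking $\rho \leq \tfrac{1}{2}$ (so that $\left( 1-2\rho \right) \left( m_{\alpha }+m_{\beta }\right) \left\vert \mathbf{G}^{\prime }\right\vert ^{2}\geq 0$ may be discarded) reduces the claim to $\left( 1-\rho \right) \mu \left\vert \mathbf{g}\right\vert ^{2}+\left( 1-\rho \right) \dfrac{m_{\alpha }-m_{\beta }}{m_{\alpha }+m_{\beta }}\Delta \geq \dfrac{8\rho m_{\alpha }^{2}}{m_{\alpha }+m_{\beta }}\left\vert \mathbf{w}\right\vert ^{2}$, which then holds once $\rho $ is small enough in terms of the mass ratio.

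The main obstacle is the sign of $\Delta =\Delta I_{kj,il}^{\alpha \beta }$. Assume first $m_{\alpha }>m_{\beta }$. When $\Delta \geq 0$ all $\Delta $-terms have the favourable sign and matching coefficients of $\left\vert \mathbf{g}\right\vert ^{2}$ and of $\Delta $ shows that any $\rho \leq \left( m_{\alpha }-m_{\beta }\right) ^{2}/\left( \left( m_{\alpha }-m_{\beta }\right) ^{2}+32\,m_{\alpha }m_{\beta }\right) $ works. When $\Delta <0$ the term $\left( 1-\rho \right) \left( m_{\alpha }-m_{\beta }\right) \Delta /\left( m_{\alpha }+m_{\beta }\right) $ is negative and the $\Delta $-term in the bound on $\left\vert \mathbf{w}\right\vert ^{2}$ is harmful; both are controlled by using that admissibility of $\left( \boldsymbol{\xi }^{\prime },\boldsymbol{\xi }_{\ast }^{\prime }\right) $ forces $\left\vert \mathbf{g}^{\prime }\right\vert ^{2}\geq 0$, i.e. $\left\vert \Delta \right\vert \leq \dfrac{m_{\alpha }m_{\beta }}{2\left( m_{\alpha }-m_{\beta }\right) }\left\vert \mathbf{g}\right\vert ^{2}$ and $\left\vert \mathbf{g}^{\prime }\right\vert <\left\vert \mathbf{g}\right\vert $: the first bound absorbs the negative $\Delta $-term into $\left( 1-\rho \right) \mu \left\vert \mathbf{g}\right\vert ^{2}$ at the cost of a factor $\tfrac{1}{2}$, the second improves the estimate to $\left\vert \mathbf{w}\right\vert ^{2}\leq \dfrac{4m_{\beta }^{2}}{\left( m_{\alpha }-m_{\beta }\right) ^{2}}\left\vert \mathbf{g}\right\vert ^{2}$, and then any $\rho \leq \left( m_{\alpha }-m_{\beta }\right) ^{2}/\left( \left( m_{\alpha }-m_{\beta }\right) ^{2}+64\,m_{\alpha }m_{\beta }\right) $ suffices. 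Hence
\[
\rho :=\min \left( \frac{1}{2},\ \frac{\left( m_{\alpha }-m_{\beta }\right) ^{2}}{\left( m_{\alpha }-m_{\beta }\right) ^{2}+64\,m_{\alpha }m_{\beta }}\right)
\]
does the job, with $0<\rho <1$ precisely because $m_{\alpha }\neq m_{\beta }$ and $m_{\alpha }m_{\beta }>0$; and the second displayed inequality of the lemma follows at once since $1+\rho <2$ and $\dfrac{m_{\alpha }-m_{\beta }}{m_{\alpha }+m_{\beta }}\Delta \geq -\left\vert \Delta \right\vert $. Finally, the case $m_{\alpha }<m_{\beta }$ reduces to the one above by interchanging the starred and unstarred particles ($\boldsymbol{\xi }\leftrightarrow \boldsymbol{\xi }_{\ast }$, $\boldsymbol{\xi }^{\prime }\leftrightarrow \boldsymbol{\xi }_{\ast }^{\prime }$, $\left( \alpha ,i,k\right) \leftrightarrow \left( \beta ,j,l\right) $), under which $\left( \ref{vrel2}\right) $ and $\left( \ref{vrel3}\right) $ are preserved, $\Delta $ changes sign, and both sides of the asserted inequality are unchanged.
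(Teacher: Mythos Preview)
Your argument is correct, but the route is genuinely different from the paper's. The paper decomposes $\boldsymbol{\xi},\boldsymbol{\xi}',\boldsymbol{\xi}_{\ast},\boldsymbol{\xi}_{\ast}'$ into components along and orthogonal to $\boldsymbol{\eta}$, writes both energies $m_{\alpha}|\boldsymbol{\xi}'|^{2}+m_{\beta}|\boldsymbol{\xi}_{\ast}'|^{2}$ and $m_{\alpha}|\boldsymbol{\xi}|^{2}+m_{\beta}|\boldsymbol{\xi}_{\ast}|^{2}$ explicitly in the scalar parameters $q=|\boldsymbol{\xi}-\boldsymbol{\xi}'|$, $r=\boldsymbol{\xi}\cdot\boldsymbol{\eta}$ and $\chi=\Delta/(m_{\alpha}q)$, and then completes the square in $r$. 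This yields a quadratic in $\rho$ whose smaller root is the explicit, sharp value $\rho=\bigl((\sqrt{m_{\alpha}}-\sqrt{m_{\beta}})/(\sqrt{m_{\alpha}}+\sqrt{m_{\beta}})\bigr)^{2}$, valid uniformly with no case analysis on the sign of $\Delta$. Your approach instead passes to centre-of-mass coordinates, isolates the clean identity $|\mathbf{g}'|^{2}=|\mathbf{g}|^{2}+2(m_{\alpha}-m_{\beta})\Delta/(m_{\alpha}m_{\beta})$, and then controls $|\mathbf{G}|^{2}$ by the crude Cauchy--Schwarz bound $|\mathbf{G}|^{2}\le 2|\mathbf{G}'|^{2}+C|\mathbf{w}|^{2}$ together with $\mathbf{w}=m_{\beta}(\mathbf{g}'-\mathbf{g})/(m_{\alpha}-m_{\beta})$; the sign of $\Delta$ then has to be handled separately, invoking $|\mathbf{g}'|^{2}\ge 0$ when $\Delta<0$. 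This is perfectly valid and arguably more transparent in isolating where $m_{\alpha}\neq m_{\beta}$ enters, but it costs you sharpness: your $\rho$ is of order $(m_{\alpha}-m_{\beta})^{2}/(64\,m_{\alpha}m_{\beta})$ for nearly equal masses, whereas the paper's is of order $(m_{\alpha}-m_{\beta})^{2}/(16\,m_{\alpha}m_{\beta})$, and you also need the extraneous restriction $\rho\le\tfrac12$. One small wording issue: when $\Delta\ge 0$ the $\Delta$-term in your upper bound on $|\mathbf{w}|^{2}$ is actually \emph{unfavourable} (it enlarges the right-hand side), so ``all $\Delta$-terms have the favourable sign'' is not literally true; what makes the case work is that, as you then say, the $\Delta$-coefficients on the two sides satisfy the \emph{same} inequality in $\rho$ as the $|\mathbf{g}|^{2}$-coefficients. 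The symmetry reduction for $m_{\alpha}<m_{\beta}$ is correct.
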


A proof of Lemma \ref{L3}, based on the proof of the corresponding lemma 
\cite{BGPS-13} for monatomic mixtures in \cite{Be-21a}, is accounted for in
the appendix. The proof is constructive, in the way that an explicit value
of such a number $\rho $, namely%
\begin{equation*}
\rho =\left( \frac{\sqrt{m_{\alpha }}-\sqrt{m_{\beta }}}{\sqrt{m_{\alpha }}+%
\sqrt{m_{\beta }}}\right) ^{2}\text{,}
\end{equation*}%
is produced in the proof.

Now we turn to the proof of Theorem \ref{Thm1}. Note that throughout the
proof $C$ will denote a generic positive constant.

\begin{proof}
For $i\in \left\{ 1,...,r_{\alpha }\right\} $ and $\alpha \in \left\{
1,...,s\right\} $, rewrite expression $\left( \ref{dec1}\right) $ as%
\begin{eqnarray*}
K_{\alpha ,i} &=&\left( M_{\alpha ,i}\right) ^{-1/2}\sum\limits_{\beta
=1}^{s}\sum\limits_{k=1}^{r_{\alpha }}\sum\limits_{j,l=1}^{r_{\beta
}}\int_{\left( \mathbb{R}^{3}\right) ^{3}}w_{\alpha \beta }(\boldsymbol{\xi }%
,\boldsymbol{\xi }_{\ast },I_{i}^{\alpha },I_{j}^{\beta }\left\vert 
\boldsymbol{\xi }^{\prime },\boldsymbol{\xi }_{\ast }^{\prime
},I_{k}^{\alpha },I_{l}^{\beta }\right. ) \\
&&\times \left( \frac{h_{\alpha ,k}^{\prime }}{\left( M_{\alpha ,k}^{\prime
}\right) ^{1/2}}+\frac{h_{\beta ,l\ast }^{\prime }}{\left( M_{\beta ,l\ast
}^{\prime }\right) ^{1/2}}-\frac{h_{\beta ,j\ast }}{M_{\beta ,j\ast }^{1/2}}%
\right) \,d\boldsymbol{\xi }_{\ast }d\boldsymbol{\xi }^{\prime }d\boldsymbol{%
\xi }_{\ast }^{\prime }\text{,}
\end{eqnarray*}%
with 
\begin{eqnarray*}
&&w_{\alpha \beta }(\boldsymbol{\xi },\boldsymbol{\xi }_{\ast
},I_{i}^{\alpha },I_{j}^{\beta }\left\vert \boldsymbol{\xi }^{\prime },%
\boldsymbol{\xi }_{\ast }^{\prime },I_{k}^{\alpha },I_{l}^{\beta }\right. )
\\
&=&\left( \frac{M_{\alpha ,i}M_{\beta ,j\ast }M_{\alpha ,k}^{\prime
}M_{\beta ,l\ast }^{\prime }}{\varphi _{i}^{\alpha }\varphi _{j}^{\beta
}\varphi _{k}^{\alpha }\varphi _{l}^{\beta }}\right) ^{1/2}W_{\alpha \beta }(%
\boldsymbol{\xi },\boldsymbol{\xi }_{\ast },I_{i}^{\alpha },I_{j}^{\beta
}\left\vert \boldsymbol{\xi }^{\prime },\boldsymbol{\xi }_{\ast }^{\prime
},I_{k}^{\alpha },I_{l}^{\beta }\right. )\text{.}
\end{eqnarray*}%
Due to relations $\left( \ref{rel1}\right) $, the relations%
\begin{eqnarray}
w_{\alpha \beta }(\boldsymbol{\xi },\boldsymbol{\xi }_{\ast },I_{i}^{\alpha
},I_{j}^{\beta }\left\vert \boldsymbol{\xi }^{\prime },\boldsymbol{\xi }%
_{\ast }^{\prime },I_{k}^{\alpha },I_{l}^{\beta }\right. ) &=&w_{\beta
\alpha }(\boldsymbol{\xi }_{\ast },\boldsymbol{\xi },I_{j}^{\beta
},I_{i}^{\alpha }\left\vert \boldsymbol{\xi }_{\ast }^{\prime },\boldsymbol{%
\xi }^{\prime },I_{l}^{\beta },I_{k}^{\alpha }\right. )  \notag \\
w_{\alpha \beta }(\boldsymbol{\xi },\boldsymbol{\xi }_{\ast },I_{i}^{\alpha
},I_{j}^{\beta }\left\vert \boldsymbol{\xi }^{\prime },\boldsymbol{\xi }%
_{\ast }^{\prime },I_{k}^{\alpha },I_{l}^{\beta }\right. ) &=&w_{\alpha
\beta }(\boldsymbol{\xi }^{\prime },\boldsymbol{\xi }_{\ast }^{\prime
},I_{k}^{\alpha },I_{l}^{\beta }\left\vert \boldsymbol{\xi },\boldsymbol{\xi 
}_{\ast },I_{i}^{\alpha },I_{j}^{\beta }\right. )  \notag \\
w_{\alpha \alpha }(\boldsymbol{\xi },\boldsymbol{\xi }_{\ast },I_{i}^{\alpha
},I_{j}^{\beta }\left\vert \boldsymbol{\xi }^{\prime },\boldsymbol{\xi }%
_{\ast }^{\prime },I_{k}^{\alpha },I_{l}^{\beta }\right. ) &=&w_{\alpha
\alpha }(\boldsymbol{\xi },\boldsymbol{\xi }_{\ast },I_{i}^{\alpha
},I_{j}^{\alpha }\left\vert \boldsymbol{\xi }_{\ast }^{\prime },\boldsymbol{%
\xi }^{\prime },I_{l}^{\alpha },I_{k}^{\alpha }\right. )  \label{rel2}
\end{eqnarray}%
are satisfied for $\left( \alpha ,\beta ,i,j,k,l\right) \in \Omega $.

By renaming $\left\{ \boldsymbol{\xi }_{\ast },j\right\} \leftrightarrows
\left\{ \boldsymbol{\xi }_{\ast }^{\prime },l\right\} $, for $i\in \left\{
1,...,r_{\alpha }\right\} $ and $\left\{ \alpha ,\beta \right\} \subseteq
\left\{ 1,...,s\right\} $%
\begin{eqnarray*}
&&\sum\limits_{k=1}^{r_{\alpha }}\sum\limits_{j,l=1}^{r_{\beta
}}\int_{\left( \mathbb{R}^{3}\right) ^{3}}w_{\alpha \beta }(\boldsymbol{\xi }%
,\boldsymbol{\xi }_{\ast },I_{i}^{\alpha },I_{j}^{\beta }\left\vert 
\boldsymbol{\xi }^{\prime },\boldsymbol{\xi }_{\ast }^{\prime
},I_{k}^{\alpha },I_{l}^{\beta }\right. )\,\frac{h_{\beta ,l\ast }^{\prime }%
}{\left( M_{\beta ,l\ast }^{\prime }\right) ^{1/2}}\,d\boldsymbol{\xi }%
_{\ast }d\boldsymbol{\xi }^{\prime }d\boldsymbol{\xi }_{\ast }^{\prime } \\
&=&\sum\limits_{k=1}^{r_{\alpha }}\sum\limits_{j,l=1}^{r_{\beta
}}\int_{\left( \mathbb{R}^{3}\right) ^{3}}w_{\alpha \beta }(\boldsymbol{\xi }%
,\boldsymbol{\xi }_{\ast }^{\prime },I_{i}^{\alpha },I_{l}^{\beta
}\left\vert \boldsymbol{\xi }^{\prime },\boldsymbol{\xi }_{\ast
},I_{k}^{\alpha },I_{j}^{\beta }\right. )\,\frac{h_{\beta ,j\ast }}{M_{\beta
,j\ast }^{1/2}}\,\,d\boldsymbol{\xi }_{\ast }d\boldsymbol{\xi }^{\prime }d%
\boldsymbol{\xi }_{\ast }^{\prime }\text{.}
\end{eqnarray*}%
Moreover, by renaming $\left\{ \boldsymbol{\xi }_{\ast },j\right\}
\leftrightarrows \left\{ \boldsymbol{\xi }^{\prime },k\right\} $,%
\begin{eqnarray*}
&&\sum\limits_{k=1}^{r_{\alpha }}\sum\limits_{j,l=1}^{r_{\beta
}}\int_{\left( \mathbb{R}^{3}\right) ^{3}}w_{\alpha \beta }(\boldsymbol{\xi }%
,\boldsymbol{\xi }_{\ast },I_{i}^{\alpha },I_{j}^{\beta }\left\vert 
\boldsymbol{\xi }^{\prime },\boldsymbol{\xi }_{\ast }^{\prime
},I_{k}^{\alpha },I_{l}^{\beta }\right. )\,\frac{h_{\alpha ,k}^{\prime }}{%
\left( M_{\alpha ,k}^{\prime }\right) ^{1/2}}\,d\boldsymbol{\xi }_{\ast }d%
\boldsymbol{\xi }^{\prime }d\boldsymbol{\xi }_{\ast }^{\prime } \\
&=&\sum\limits_{j=1}^{r_{\alpha }}\sum\limits_{k,l=1}^{r_{\beta
}}\int_{\left( \mathbb{R}^{3}\right) ^{3}}w_{\alpha \beta }(\boldsymbol{\xi }%
,\boldsymbol{\xi }^{\prime },I_{i}^{\alpha },I_{k}^{\beta }\left\vert 
\boldsymbol{\xi }_{\ast },\boldsymbol{\xi }_{\ast }^{\prime },I_{j}^{\alpha
},I_{l}^{\beta }\right. )\,\frac{h_{\alpha ,j\ast }}{M_{\alpha ,j\ast }^{1/2}%
}\,d\boldsymbol{\xi }_{\ast }d\boldsymbol{\xi }^{\prime }d\boldsymbol{\xi }%
_{\ast }^{\prime }
\end{eqnarray*}%
for $i\in \left\{ 1,...,r_{\alpha }\right\} $ and $\left\{ \alpha ,\beta
\right\} \subseteq \left\{ 1,...,s\right\} $. It follows that  
\begin{eqnarray}
K_{\alpha ,i}\left( h\right)  &=&\sum\limits_{\beta =1}^{s}\int_{\mathbb{R}%
^{3}}k_{\alpha \beta ,i}\left( \boldsymbol{\xi },\boldsymbol{\xi }_{\ast
}\right) \,h_{\ast }\,d\boldsymbol{\xi }_{\ast }\text{, where }  \notag \\
k_{\alpha \beta ,i}h_{\ast } &=&\sum\limits_{j=1}^{r_{\alpha }}k_{\alpha
\beta ,ij}^{\left( \alpha \right) }h_{\alpha \ast
}+\sum\limits_{j=1}^{r_{\beta }}k_{\alpha \beta ,ij}^{\left( \beta \right)
}h_{\beta \ast }  \notag \\
&=&\sum\limits_{j=1}^{r_{\alpha }}k_{\alpha \beta ,ij}^{\left( \alpha
\right) }h_{\alpha \ast }+\sum\limits_{j=1}^{r_{\beta }}\left( k_{\alpha
\beta ,ij}^{\left( \beta ,2\right) }-k_{\alpha \beta ,ij}^{\left( \beta
,1\right) }\right) h_{\beta \ast }\text{, with}  \notag \\
k_{\alpha \beta ,ij}^{\left( \alpha \right) }(\boldsymbol{\xi },\boldsymbol{%
\xi }_{\ast }) &=&\sum\limits_{k,l=1}^{r_{\beta }}\int_{\left( \mathbb{R}%
^{3}\right) ^{2}}\frac{w_{\alpha \beta }(\boldsymbol{\xi },\boldsymbol{\xi }%
^{\prime },I_{i}^{\alpha },I_{k}^{\beta }\left\vert \boldsymbol{\xi }_{\ast
},\boldsymbol{\xi }_{\ast }^{\prime },I_{j}^{\alpha },I_{l}^{\beta }\right. )%
}{\left( M_{\alpha ,i}M_{\alpha ,j\ast }\right) ^{1/2}}\,d\boldsymbol{\xi }%
^{\prime }d\boldsymbol{\xi }_{\ast }^{\prime }\text{,}  \notag \\
k_{\alpha \beta ,ij}^{\left( \beta ,1\right) }(\boldsymbol{\xi },\boldsymbol{%
\xi }_{\ast }) &=&\sum\limits_{k=1}^{r_{\alpha }}\sum\limits_{l=1}^{r_{\beta
}}\int_{\left( \mathbb{R}^{3}\right) ^{2}}\frac{w_{\alpha \beta }(%
\boldsymbol{\xi },\boldsymbol{\xi }_{\ast },I_{i}^{\alpha },I_{j}^{\beta
}\left\vert \boldsymbol{\xi }^{\prime },\boldsymbol{\xi }_{\ast }^{\prime
},I_{k}^{\alpha },I_{l}^{\beta }\right. )}{\left( M_{\alpha ,i}M_{\beta
,j\ast }\right) ^{1/2}}\,d\boldsymbol{\xi }^{\prime }d\boldsymbol{\xi }%
_{\ast }^{\prime }\text{, and}  \notag \\
k_{\alpha \beta ,ij}^{\left( \beta ,2\right) }(\boldsymbol{\xi },\boldsymbol{%
\xi }_{\ast }) &=&\sum\limits_{k=1}^{r_{\alpha }}\sum\limits_{l=1}^{r_{\beta
}}\int_{\left( \mathbb{R}^{3}\right) ^{2}}\frac{w_{\alpha \beta }(%
\boldsymbol{\xi },\boldsymbol{\xi }_{\ast }^{\prime },I_{i}^{\alpha
},I_{l}^{\beta }\left\vert \boldsymbol{\xi }^{\prime },\boldsymbol{\xi }%
_{\ast },I_{k}^{\alpha },I_{j}^{\beta }\right. )}{\left( M_{\alpha
,i}M_{\beta ,j\ast }\right) ^{1/2}}\,d\boldsymbol{\xi }^{\prime }d%
\boldsymbol{\xi }_{\ast }^{\prime }  \label{k1}
\end{eqnarray}%
for $i\in \left\{ 1,...,r_{\alpha }\right\} $ and $\alpha \in \left\{
1,...,s\right\} $.

Next we obtain some symmetry relations that will help to yield
self-adjointness of the operator $K$ below. Indeed, by applying the second
relation in $\left( \ref{rel2}\right) $ and renaming $\left\{ \boldsymbol{%
\xi }^{\prime },k\right\} \leftrightarrows \left\{ \boldsymbol{\xi }_{\ast
}^{\prime },l\right\} $, 
\begin{eqnarray}
k_{\alpha \beta ,ij}^{\left( \alpha \right) }(\boldsymbol{\xi },\boldsymbol{%
\xi }_{\ast }) &=&\sum\limits_{k,l=1}^{r_{\beta }}\int_{\left( \mathbb{R}%
^{3}\right) ^{2}}\frac{w_{\alpha \beta }(\boldsymbol{\xi }_{\ast },%
\boldsymbol{\xi }_{\ast }^{\prime },I_{j}^{\alpha },I_{l}^{\beta }\left\vert 
\boldsymbol{\xi },\boldsymbol{\xi }^{\prime },I_{i}^{\alpha },I_{k}^{\beta
}\right. )}{\left( M_{\alpha ,i}M_{\alpha ,j\ast }\right) ^{1/2}}\,d%
\boldsymbol{\xi }^{\prime }d\boldsymbol{\xi }_{\ast }^{\prime }  \notag \\
&=&\sum\limits_{k,l=1}^{r_{\beta }}\int_{\left( \mathbb{R}^{3}\right) ^{2}}%
\frac{w_{\alpha \beta }(\boldsymbol{\xi }_{\ast },\boldsymbol{\xi }^{\prime
},I_{j}^{\alpha },I_{k}^{\beta }\left\vert \boldsymbol{\xi },\boldsymbol{\xi 
}_{\ast }^{\prime },I_{i}^{\alpha },I_{l}^{\beta }\right. )}{\left(
M_{\alpha ,i}M_{\alpha ,j\ast }\right) ^{1/2}}\,d\boldsymbol{\xi }^{\prime }d%
\boldsymbol{\xi }_{\ast }^{\prime }\,  \notag \\
&=&k_{\alpha \beta ,ji}^{\left( \alpha \right) }(\boldsymbol{\xi }_{\ast },%
\boldsymbol{\xi })  \label{sa1}
\end{eqnarray}%
for $\left\{ i,j\right\} \subseteq \left\{ 1,...,r_{\alpha }\right\} $ and $%
\left\{ \alpha ,\beta \right\} \subseteq \left\{ 1,...,s\right\} $. 

Moreover, for $\left( i,j\right) \in \left\{ 1,...,r_{\alpha }\right\}
\times \left\{ 1,...,r_{\beta }\right\} $ and $\left\{ \alpha ,\beta
\right\} \subseteq \left\{ 1,...,s\right\} $%
\begin{equation}
k_{\alpha \beta ,ij}^{\left( \beta \right) }(\boldsymbol{\xi },\boldsymbol{%
\xi }_{\ast })=k_{\beta \alpha ,ji}^{\left( \alpha ,1\right) }(\boldsymbol{%
\xi }_{\ast },\boldsymbol{\xi })-k_{\beta \alpha ,ji}^{\left( \alpha
,2\right) }(\boldsymbol{\xi }_{\ast },\boldsymbol{\xi })=k_{\beta \alpha
,ji}^{\left( \alpha \right) }(\boldsymbol{\xi }_{\ast },\boldsymbol{\xi }),
\label{sa2}
\end{equation}%
since, by applying the first relation in $\left( \ref{rel2}\right) $ and
renaming $\left\{ \boldsymbol{\xi }^{\prime },k\right\} \leftrightarrows
\left\{ \boldsymbol{\xi }_{\ast }^{\prime },l\right\} $, 
\begin{eqnarray*}
k_{\alpha \beta ,ij}^{\left( \beta ,1\right) }(\boldsymbol{\xi },\boldsymbol{%
\xi }_{\ast }) &=&\sum\limits_{k=1}^{r_{\alpha }}\sum\limits_{l=1}^{r_{\beta
}}\int_{\left( \mathbb{R}^{3}\right) ^{2}}\frac{w_{\beta \alpha }(%
\boldsymbol{\xi }_{\ast },\boldsymbol{\xi },I_{j}^{\beta },I_{i}^{\alpha
}\left\vert \boldsymbol{\xi }_{\ast }^{\prime },\boldsymbol{\xi }^{\prime
},I_{l}^{\beta },I_{k}^{\alpha }\right. )}{\left( M_{\alpha ,i}M_{\beta
,j\ast }\right) ^{1/2}}\,d\boldsymbol{\xi }^{\prime }d\boldsymbol{\xi }%
_{\ast }^{\prime } \\
&=&\sum\limits_{k=1}^{r_{\beta }}\sum\limits_{l=1}^{r_{\alpha }}\int_{\left( 
\mathbb{R}^{3}\right) ^{2}}\frac{w_{\beta \alpha }(\boldsymbol{\xi }_{\ast },%
\boldsymbol{\xi },I_{j}^{\beta },I_{i}^{\alpha }\left\vert \boldsymbol{\xi }%
^{\prime },\boldsymbol{\xi }_{\ast }^{\prime },I_{k}^{\beta },I_{l}^{\alpha
}\right. )}{\left( M_{\alpha ,i}M_{\beta ,j\ast }\right) ^{1/2}}\,d%
\boldsymbol{\xi }^{\prime }d\boldsymbol{\xi }_{\ast }^{\prime } \\
&=&k_{\beta \alpha ,ji}^{\left( \alpha ,1\right) }(\boldsymbol{\xi }_{\ast },%
\boldsymbol{\xi })\text{,}
\end{eqnarray*}%
while, by applying the two first relations in $\left( \ref{rel2}\right) $
and renaming $\left\{ \boldsymbol{\xi }^{\prime },k\right\} \leftrightarrows
\left\{ \boldsymbol{\xi }_{\ast }^{\prime },l\right\} $,%
\begin{eqnarray*}
k_{\alpha \beta ,ij}^{\left( \beta ,2\right) }(\boldsymbol{\xi },\boldsymbol{%
\xi }_{\ast }) &=&\sum\limits_{k=1}^{r_{\alpha }}\sum\limits_{l=1}^{r_{\beta
}}\int_{\left( \mathbb{R}^{3}\right) ^{2}}\frac{w_{\beta \alpha }(%
\boldsymbol{\xi }_{\ast }^{\prime },\boldsymbol{\xi },I_{l}^{\beta
},I_{i}^{\alpha }\left\vert \boldsymbol{\xi }_{\ast },\boldsymbol{\xi }%
^{\prime },I_{j}^{\beta },I_{k}^{\alpha }\right. )}{\left( M_{\alpha
,i}M_{\beta ,j\ast }\right) ^{1/2}}\,d\boldsymbol{\xi }^{\prime }d%
\boldsymbol{\xi }_{\ast }^{\prime } \\
&=&\sum\limits_{k=1}^{r_{\alpha }}\sum\limits_{l=1}^{r_{\beta }}\int_{\left( 
\mathbb{R}^{3}\right) ^{2}}\frac{w_{\beta \alpha }(\boldsymbol{\xi }_{\ast },%
\boldsymbol{\xi }^{\prime },I_{j}^{\beta },I_{k}^{\alpha }\left\vert 
\boldsymbol{\xi }_{\ast }^{\prime },\boldsymbol{\xi },I_{l}^{\beta
},I_{i}^{\alpha }\right. )}{\left( M_{\alpha ,i}M_{\beta ,j\ast }\right)
^{1/2}}\,d\boldsymbol{\xi }^{\prime }d\boldsymbol{\xi }_{\ast }^{\prime } \\
&=&\sum\limits_{k=1}^{r_{\beta }}\sum\limits_{l=1}^{r_{\alpha }}\int_{\left( 
\mathbb{R}^{3}\right) ^{2}}\frac{w_{\beta \alpha }(\boldsymbol{\xi }_{\ast },%
\boldsymbol{\xi }_{\ast }^{\prime },I_{j}^{\beta },I_{l}^{\alpha }\left\vert 
\boldsymbol{\xi }^{\prime },\boldsymbol{\xi },I_{k}^{\beta },I_{i}^{\alpha
}\right. )}{\left( M_{\alpha ,i}M_{\beta ,j\ast }\right) ^{1/2}}\,d%
\boldsymbol{\xi }^{\prime }d\boldsymbol{\xi }_{\ast }^{\prime } \\
&=&k_{\beta \alpha ,ji}^{\left( \alpha ,2\right) }(\boldsymbol{\xi }_{\ast },%
\boldsymbol{\xi })\text{.}
\end{eqnarray*}

We now continue by proving the compactness for the three different types of
collision kernel separately. Note that,  if $\alpha =\beta $, by applying
the last relation in $\left( \ref{rel2}\right) $, $k_{\alpha \beta
,ij}^{\left( \beta ,2\right) }(\boldsymbol{\xi },\boldsymbol{\xi }_{\ast
})=k_{\alpha \beta ,ij}^{\left( \alpha \right) }(\boldsymbol{\xi },%
\boldsymbol{\xi }_{\ast })$, and we will remain with only two cases - the
first two below. Even if $m_{\alpha }=m_{\beta }$, the kernels $k_{\alpha
\beta ,ij}^{\left( \alpha \right) }(\boldsymbol{\xi },\boldsymbol{\xi }%
_{\ast })$ and $k_{\alpha \beta ,ij}^{\left( \beta ,2\right) }(\boldsymbol{%
\xi },\boldsymbol{\xi }_{\ast })$ are structurally equal, why we (in
principle) remain with (first) two cases (the second one twice).

\begin{figure}[h]
\centering
\includegraphics[width=0.6\textwidth]{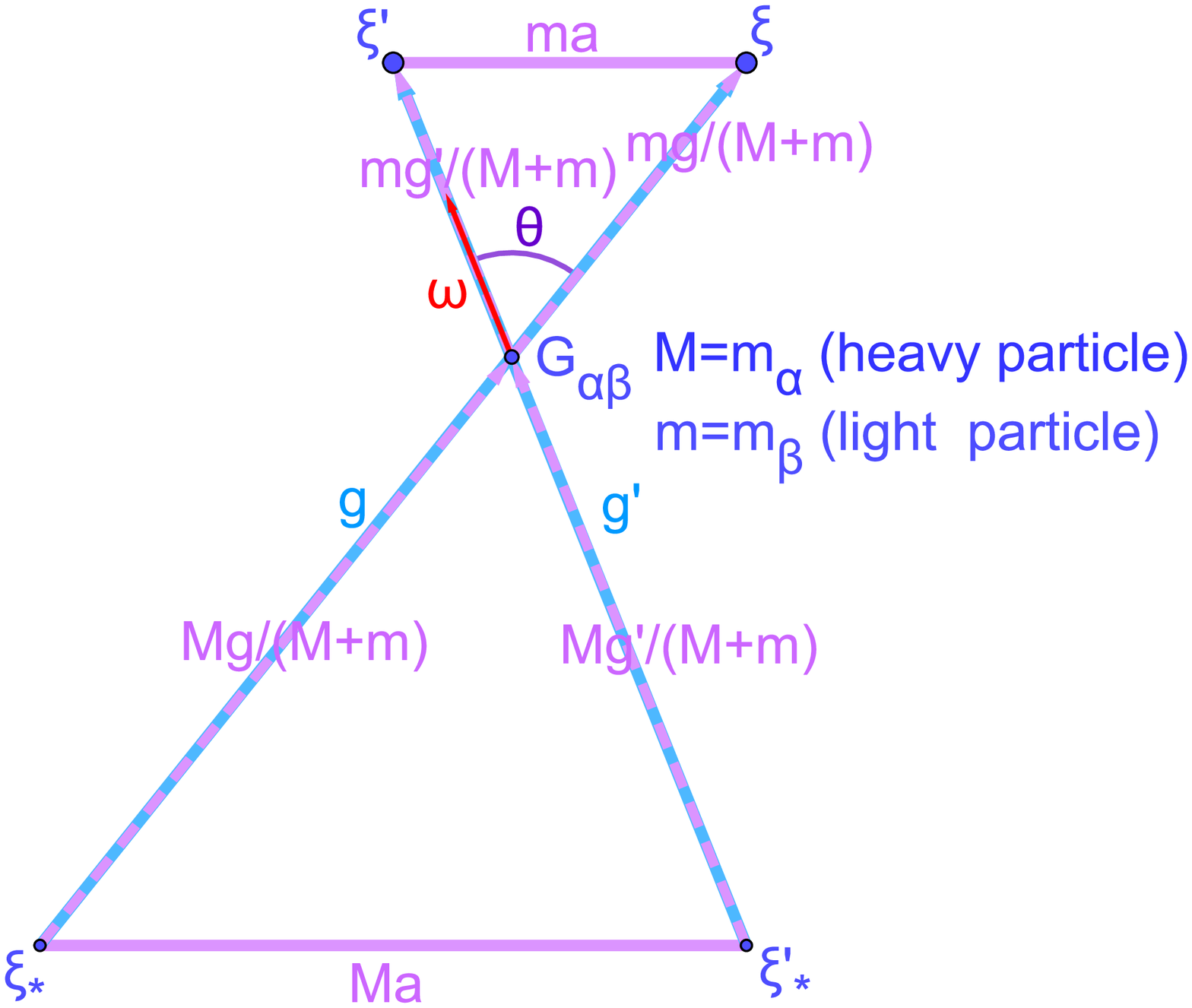}
\caption{Typical collision of $K_{\protect\alpha \protect\beta ,ij}^{(1)}$.}
\label{fig1}
\end{figure}

\textbf{I. Compactness of }$K_{\alpha \beta ,ij}^{(1)}=\int_{\mathbb{R}%
^{3}}k_{\alpha \beta ,ij}^{\left( \beta ,1\right) }(\boldsymbol{\xi },%
\boldsymbol{\xi }_{\ast })\,h_{\beta ,j\ast }\,d\boldsymbol{\xi }_{\ast }$
for $\left\{ \alpha ,\beta \right\} \subseteq \left\{ 1,...,s\right\} $ and $%
\left( i,j\right) \in \left\{ 1,...,r_{\alpha }\right\} \times \left\{
1,...,r_{\beta }\right\} $.

Assume the internal energy gap $\Delta I_{kl,ij}^{\alpha \beta
}=I_{k}^{\alpha }+I_{l}^{\beta }-I_{i}^{\alpha }-I_{j}^{\beta }$, as well
as, the velocities $\boldsymbol{\xi }$ and $\boldsymbol{\xi }_{\ast }$, to
be given. Then a collision will be uniquely determined by the unit vector $%
\boldsymbol{\omega }=\mathbf{g}^{\prime }/\left\vert \mathbf{g}^{\prime
}\right\vert $, with $\mathbf{g}^{\prime }=\boldsymbol{\xi }^{\prime }-%
\boldsymbol{\xi }_{\ast }^{\prime }$. This follows, since, by conservation
of momentum and total energy $\left( \ref{CI}\right) $, $m_{\alpha }\left( 
\boldsymbol{\xi }-\boldsymbol{\xi }^{\prime }\right) =m_{\beta }\left( 
\boldsymbol{\xi }_{\ast }^{\prime }-\boldsymbol{\xi }_{\ast }\right) $,
while also $\left\vert \mathbf{g}^{\prime }\right\vert $ can be obtained,
cf. Figure $\ref{fig1}$.

Indeed, expression $\left( \ref{k1}\right) $ of $k_{\alpha \beta
,ij}^{\left( \beta ,1\right) }$ may be
transformed - by a change of variables $\left\{ 
\boldsymbol{\xi }^{\prime },\boldsymbol{\xi }_{\ast }^{\prime }\right\}
\rightarrow \left\{ \left\vert \mathbf{g}^{\prime }\right\vert ,\boldsymbol{%
\omega }=\dfrac{\mathbf{g}^{\prime }}{\left\vert \mathbf{g}^{\prime
}\right\vert },\mathbf{G}_{\alpha \beta }^{\prime }=\dfrac{m_{\alpha }%
\boldsymbol{\xi }^{\prime }+m_{\beta }\boldsymbol{\xi }_{\ast }^{\prime }}{%
m_{\alpha }+m_{\beta }}\right\} $, cf. Figure $\ref{fig1}$, noting that $%
\left( \ref{df1}\right) $, and using relation $\left( \ref{M1}\right) $ - to%
\begin{eqnarray*}
k_{\alpha \beta ,ij}^{\left( \beta ,1\right) }(\boldsymbol{\xi },\boldsymbol{%
\xi }_{\ast }) &=&\left( M_{\alpha ,i}M_{\beta ,j\ast }\right)
^{1/2}\sum\limits_{k=1}^{r_{\alpha }}\sum\limits_{l=1}^{r_{\beta }}\int_{%
\mathbb{R}^{3}\times \mathbb{R}_{+}\times \mathbb{S}^{2}}\left\vert \mathbf{g%
}\right\vert \sigma _{ij,kl}^{\alpha \beta }\mathbf{1}_{\left\vert \mathbf{g}%
\right\vert ^{2}>2\widetilde{\Delta }I_{kl,ij}^{\alpha \beta }} \\
&&\times \delta _{1}\left( \sqrt{\left\vert \mathbf{g}\right\vert ^{2}-2%
\widetilde{\Delta }I_{kl,ij}^{\alpha \beta }}-\left\vert \mathbf{g}^{\prime
}\right\vert \right) \delta _{3}\left( \mathbf{G}_{\alpha \beta }-\mathbf{G}%
_{\alpha \beta }^{\prime }\right) d\mathbf{G}_{\alpha \beta }^{\prime
}d\left\vert \mathbf{g}^{\prime }\right\vert d\boldsymbol{\omega } \\
&=&\left( M_{\alpha ,i}M_{\beta ,j\ast }\right) ^{1/2}\left\vert \mathbf{g}%
\right\vert \sum\limits_{k=1}^{r_{\alpha }}\sum\limits_{l=1}^{r_{\beta
}}\int_{\mathbb{S}^{2}}\sigma _{ij,kl}^{\alpha \beta }\left( \left\vert 
\mathbf{g}\right\vert ,\cos \theta \right) \mathbf{1}_{\left\vert \mathbf{g}%
\right\vert ^{2}>2\widetilde{\Delta }I_{kl,ij}^{\alpha \beta }}\,d%
\boldsymbol{\omega }\text{,} \\
&&\text{ with }\cos \theta =\boldsymbol{\omega }\cdot \frac{\mathbf{g}}{%
\left\vert \mathbf{g}\right\vert }\text{, }\mathbf{g}=\boldsymbol{\xi }-%
\boldsymbol{\xi }_{\ast }\text{, }\mathbf{G}_{\alpha \beta }=\dfrac{%
m_{\alpha }\boldsymbol{\xi }+m_{\beta }\boldsymbol{\xi }_{\ast }}{m_{\alpha
}+m_{\beta }}\text{,} \\
\text{ } &&\text{and }\widetilde{\Delta }I_{kl,ij}^{\alpha \beta }=\frac{%
m_{\alpha }+m_{\beta }}{m_{\alpha }m_{\beta }}\Delta I_{kl,ij}^{\alpha \beta
}\text{.}
\end{eqnarray*}%
By assumption $\left( \ref{est1}\right) $ and the expression\textbf{\ } 
\begin{eqnarray}
m_{\alpha }\frac{\left\vert \boldsymbol{\xi }\right\vert ^{2}}{2}+m_{\beta }%
\frac{\left\vert \boldsymbol{\xi }_{\ast }\right\vert ^{2}}{2}+I_{i}^{\alpha
}+I_{j}^{\beta } &=&\frac{m_{\alpha }+m_{\beta }}{2}\left\vert \mathbf{G}%
_{\alpha \beta }\right\vert ^{2}+E_{ij}^{\alpha \beta }\text{, where}  \notag
\\
E_{ij}^{\alpha \beta } &=&\dfrac{m_{\alpha }m_{\beta }}{2\left( m_{\alpha
}+m_{\beta }\right) }\left\vert \mathbf{g}\right\vert ^{2}+I_{i}^{\alpha
}+I_{j}^{\beta }\text{,}  \label{m2}
\end{eqnarray}%
for the exponent of the product $M_{\alpha ,i}M_{\beta ,j\ast }$, the bound%
\begin{eqnarray}
&&\left( k_{\alpha \beta ,ij}^{\left( \beta ,1\right) }(\boldsymbol{\xi },%
\boldsymbol{\xi }_{\ast })\right) ^{2}  \notag \\
&\leq &\frac{C}{\left\vert \mathbf{g}\right\vert ^{2}}M_{\alpha ,i}M_{\beta
,j\ast }\left( \int_{\mathbb{S}^{2}}\,d\boldsymbol{\omega }\!\right)
^{2}\left( \sum\limits_{k=1}^{r_{\alpha }}\sum\limits_{l=1}^{r_{\beta
}}\left( \Psi _{ij,kl}^{\alpha \beta }+\left( \Psi _{ij,kl}^{\alpha \beta
}\right) ^{\gamma /2}\right) \mathbf{1}_{\left\vert \mathbf{g}\right\vert
^{2}>2\widetilde{\Delta }I_{kl,ij}^{\alpha \beta }}\right) ^{2}  \notag \\
&\leq &\frac{C}{\left\vert \mathbf{g}\right\vert ^{2}}e^{-\left( m_{\alpha
}+m_{\beta }\right) \left\vert \mathbf{G}_{\alpha \beta }\right\vert
^{2}/2-E_{ij}^{\alpha \beta }}\left( \sum\limits_{k=1}^{r_{\alpha
}}\sum\limits_{l=1}^{r_{\beta }}\left( 1+\left\vert \mathbf{g}\right\vert
^{2}\right) \right) ^{2}  \notag \\
&=&C\frac{\left( 1+\left\vert \mathbf{g}\right\vert ^{2}\right) ^{2}}{%
\left\vert \mathbf{g}\right\vert ^{2}}e^{-\left( m_{\alpha }+m_{\beta
}\right) \left\vert \mathbf{G}_{\alpha \beta }\right\vert
^{2}/2-E_{ij}^{\alpha \beta }}  \label{b1}
\end{eqnarray}%
may be obtained. Then, by applying the bound $\left( \ref{b1}\right) $ and
first changing variables of integration $\left\{ \boldsymbol{\xi },%
\boldsymbol{\xi }_{\ast }\right\} \rightarrow \left\{ \mathbf{g},\mathbf{G}%
_{\alpha \beta }\right\} $, with unitary Jacobian, and then to spherical
coordinates,%
\begin{eqnarray*}
&&\int_{\left( \mathbb{R}^{3}\right) ^{2}}\left( k_{\alpha \beta
,ij}^{\left( \beta ,1\right) }(\boldsymbol{\xi },\boldsymbol{\xi }_{\ast
})\right) ^{2}d\boldsymbol{\xi }d\boldsymbol{\xi }_{\ast } \\
&\leq &C\int_{\left( \mathbb{R}^{3}\right) ^{2}}e^{-\left( m_{\alpha
}+m_{\beta }\right) \left\vert \mathbf{G}_{\alpha \beta }\right\vert
^{2}/2-E_{ij}^{\alpha \beta }}\frac{\left( 1+\left\vert \mathbf{g}%
\right\vert ^{2}\right) ^{2}}{\left\vert \mathbf{g}\right\vert ^{2}}d\mathbf{%
g}\boldsymbol{\,}d\mathbf{G}_{\alpha \beta } \\
&\leq &C\int_{0}^{\infty }R^{2}e^{-\left( m_{\alpha }+m_{\beta }\right)
R^{2}/2}dR\int_{0}^{\infty }e^{-m_{\alpha }m_{\beta }s^{2}/(2\left(
m_{\alpha }+m_{\beta }\right) )}\left( 1+s^{2}\right) ^{2}ds=C\text{.}
\end{eqnarray*}%
Note that, here and below, we will, in general, not indicate an integration
over a directional vector in $\mathbb{S}^{2}$ of the form%
\begin{equation*}
\int_{\mathbb{S}^{2}}\,d\boldsymbol{\omega }=4\pi \text{,}
\end{equation*}%
but just integrate it in the generic constant $C$.

Hence,%
\begin{equation*}
K_{\alpha \beta ,ij}^{(1)}=\int_{\mathbb{R}^{3}}k_{\alpha \beta ,ij}^{\left(
\beta ,1\right) }(\boldsymbol{\xi },\boldsymbol{\xi }_{\ast })\,h_{\beta
,j\ast }\,d\boldsymbol{\xi }_{\ast }
\end{equation*}%
are Hilbert-Schmidt integral operators and as such continuous and compact on 
$L^{2}\left( d\boldsymbol{\xi }\right) $, see Theorem 7.83 in \cite%
{RenardyRogers}, for $\left( i,j\right) \in \left\{ 1,...,r_{\alpha
}\right\} \times \left\{ 1,...,r_{\beta }\right\} $ and $\left\{ \alpha
,\beta \right\} \subseteq \left\{ 1,...,s\right\} $.

\begin{figure}[h]
\centering
\includegraphics[width=0.5\textwidth]{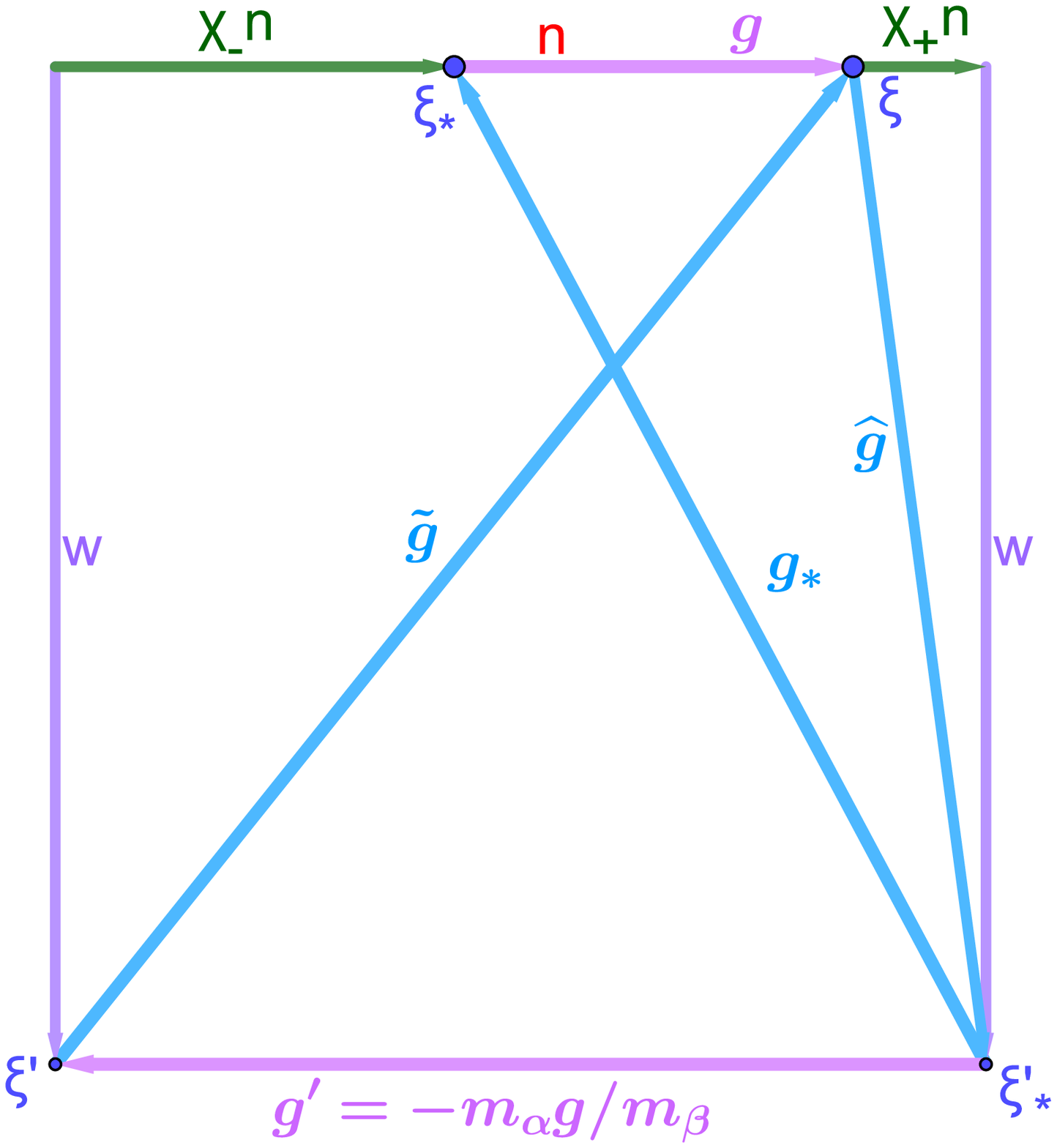}
\caption{Typical collision of $K_{\protect\alpha \protect\beta ,ij}^{(3)}$.}
\label{fig2}
\end{figure}

\textbf{II. Compactness of }$K_{\alpha \beta ,ij}^{(3)}=\int_{\mathbb{R}%
^{3}}k_{\alpha \beta ,ij}^{\left( \alpha \right) }(\boldsymbol{\xi },%
\boldsymbol{\xi }_{\ast })\,h_{\alpha ,j\ast }\,d\boldsymbol{\xi }_{\ast }$
for $\left\{ i,j\right\} \subseteq \left\{ 1,...,r_{\alpha }\right\} $ and $%
\left\{ \alpha ,\beta \right\} \subseteq \left\{ 1,...,s\right\} $.

Assume the internal energy gap $\Delta I_{ik,jl}^{\alpha \beta
}=I_{i}^{\alpha }+I_{k}^{\beta }-I_{j}^{\alpha }-I_{l}^{\beta }$, as well
as, the velocities $\boldsymbol{\xi }$ and $\boldsymbol{\xi }_{\ast }$, to
be given. Then a collision will be uniquely determined by a vector $\mathbf{w%
}$ orthogonal to $\mathbf{g}=\boldsymbol{\xi }-\boldsymbol{\xi }\mathbf{%
_{\ast }}$. This follows, since, by conservation of momentum and total
energy $\left( \ref{CI}\right) $ (reminding the relabeling of the velocities
and internal energies), the relation between $\left\vert \boldsymbol{\xi }-%
\boldsymbol{\xi }^{\prime }\right\vert $ and $\left\vert \boldsymbol{\xi }%
_{\ast }^{\prime }-\boldsymbol{\xi }_{\ast }\right\vert $ can be obtained,
while also $m_{\beta }\mathbf{g}^{\prime }=m_{\beta }\left( \boldsymbol{\xi }%
_{\ast }^{\prime }-\boldsymbol{\xi }^{\prime }\right) =m_{\alpha }\mathbf{g}$%
, cf. Figure $\ref{fig2}$. Indeed, note that - aiming to obtain expressions
for $\mathbf{g}^{\prime }$ and $\chi _{+}$ in the arguments of the
delta-functions,%
\begin{eqnarray*}
&&W_{\alpha \beta }(\boldsymbol{\xi },\boldsymbol{\xi }^{\prime
},I_{i}^{\alpha },I_{k}^{\beta }\left\vert \boldsymbol{\xi }_{\ast },%
\boldsymbol{\xi }_{\ast }^{\prime },I_{j}^{\alpha },I_{l}^{\beta }\right. )
\\
&=&\left( m_{\alpha }+m_{\beta }\right) ^{2}m_{\alpha }m_{\beta }\varphi
_{i}^{\alpha }\varphi _{k}^{\beta }\sigma _{ik,jl}^{\alpha \beta }\frac{%
\left\vert \widetilde{\mathbf{g}}\right\vert }{\left\vert \mathbf{g}_{\ast
}\right\vert }\delta _{3}\left( m_{\alpha }\mathbf{g}+m_{\beta }\mathbf{g}%
^{\prime }\right)  \\
&&\times \delta _{1}\left( m_{\alpha }\left\vert \mathbf{g}\right\vert
\left( \chi -\frac{m_{\alpha }-m_{\beta }}{2m_{\beta }}\left\vert \mathbf{g}%
\right\vert \right) -\Delta I_{ik,jl}^{\alpha \beta }\right)  \\
&=&\frac{\left( m_{\alpha }+m_{\beta }\right) ^{2}\left\vert \widetilde{%
\mathbf{g}}\right\vert }{\left\vert \mathbf{g}_{\ast }\right\vert \left\vert 
\mathbf{g}\right\vert m_{\beta }^{2}}\varphi _{i}^{\alpha }\varphi
_{k}^{\beta }\sigma _{ik,jl}^{\alpha \beta }\delta _{3}\left( \frac{%
m_{\alpha }}{m_{\beta }}\mathbf{g}+\mathbf{g}^{\prime }\right) \delta
_{1}\left( \chi _{+}-\frac{m_{\alpha }-m_{\beta }}{2m_{\beta }}\left\vert 
\mathbf{g}\right\vert -\frac{\Delta I_{ik,jl}^{\alpha \beta }}{m_{\alpha
}\left\vert \mathbf{g}\right\vert }\right) \text{,}
\end{eqnarray*}%
where $\mathbf{g}=\boldsymbol{\xi }-\boldsymbol{\xi }_{\ast }$, $\mathbf{g}%
^{\prime }=\boldsymbol{\xi }^{\prime }-\boldsymbol{\xi }_{\ast }^{\prime }$, 
$\widetilde{\mathbf{g}}=\boldsymbol{\xi }-\boldsymbol{\xi }^{\prime }$, $%
\mathbf{g}_{\ast }=\boldsymbol{\xi }_{\ast }-\boldsymbol{\xi }_{\ast
}^{\prime }$, $\Delta I_{ik,jl}^{\alpha \beta }=I_{i}^{\alpha }+I_{k}^{\beta
}-I_{j}^{\alpha }-I_{l}^{\beta }$, and $\chi _{+}=\left( \boldsymbol{\xi }%
_{\ast }^{\prime }-\boldsymbol{\xi }\right) \cdot \mathbf{n}$, with $\mathbf{%
n}=\dfrac{\mathbf{g}}{\left\vert \mathbf{g}\right\vert }$. Then by
performing a change of variables $\left\{ \boldsymbol{\xi }^{\prime },%
\boldsymbol{\xi }_{\ast }^{\prime }\right\} \rightarrow \left\{ \mathbf{g}%
^{\prime }=\boldsymbol{\xi }^{\prime }-\boldsymbol{\xi }_{\ast }^{\prime },~%
\widehat{\mathbf{g}}=\boldsymbol{\xi }_{\ast }^{\prime }-\boldsymbol{\xi }%
\right\} $, where%
\begin{equation*}
d\boldsymbol{\xi }^{\prime }d\boldsymbol{\xi }_{\ast }^{\prime }=d\mathbf{g}%
^{\prime }d\widehat{\mathbf{g}}=d\mathbf{g}^{\prime }d\chi _{+}d\mathbf{w}%
\text{, with }\mathbf{w}=\boldsymbol{\xi }_{\ast }^{\prime }-\boldsymbol{\xi 
}-\chi _{+}\mathbf{n}\text{.}
\end{equation*}%
the expression $\left( \ref{k1}\right) $ of $k_{\alpha \beta ,ij}^{\left(
\alpha \right) }$ may be rewritten in the following way%
\begin{eqnarray*}
&&k_{\alpha \beta ,ij}^{\left( \alpha \right) }(\boldsymbol{\xi },%
\boldsymbol{\xi }_{\ast }) \\
&=&\sum\limits_{k,l=1}^{r_{\beta }}\int_{\left( \mathbb{R}^{3}\right)
^{2}}\left( M_{\beta ,k}^{\prime }M_{\beta ,l\ast }^{\prime }\right) ^{1/2}%
\frac{W_{\alpha \beta }(\boldsymbol{\xi },\boldsymbol{\xi }^{\prime
},I_{i}^{\alpha },I_{k}^{\beta }\left\vert \boldsymbol{\xi }_{\ast },%
\boldsymbol{\xi }_{\ast }^{\prime },I_{j}^{\alpha },I_{l}^{\beta }\right. )}{%
\left( \varphi _{i}^{\alpha }\varphi _{k}^{\beta }\varphi _{j}^{\alpha
}\varphi _{l}^{\beta }\right) ^{1/2}}d\mathbf{g}^{\prime }d\widehat{\mathbf{g%
}} \\
&=&\sum\limits_{k,l=1}^{r_{\beta }}\int_{\left( \mathbb{R}^{3}\right)
^{\perp _{\mathbf{n}}}}\frac{\left( m_{\alpha }+m_{\beta }\right) ^{2}}{%
m_{\beta }^{2}}\frac{\left\vert \widetilde{\mathbf{g}}\right\vert \left(
M_{\beta ,k}^{\prime }M_{\beta ,l\ast }^{\prime }\right) ^{1/2}}{\left\vert 
\mathbf{g}_{\ast }\right\vert \left\vert \mathbf{g}\right\vert }\left( \frac{%
\varphi _{i}^{\alpha }\varphi _{k}^{\beta }}{\varphi _{j}^{\alpha }\varphi
_{l}^{\beta }}\right) ^{1/2} \\
&&\times \mathbf{1}_{\left\vert \widetilde{\mathbf{g}}\right\vert ^{2}>2%
\widetilde{\Delta }I_{jl,ik}^{\alpha \beta }}\sigma _{ik,jl}^{\alpha \beta
}\left( \left\vert \widetilde{\mathbf{g}}\right\vert ,\frac{\widetilde{%
\mathbf{g}}\cdot \mathbf{g}_{\ast }}{\left\vert \widetilde{\mathbf{g}}%
\right\vert \left\vert \mathbf{g}_{\ast }\right\vert }\right) d\mathbf{w}%
\text{, with }\widetilde{\Delta }I_{jl,ik}^{\alpha \beta }=\frac{m_{\alpha
}+m_{\beta }}{m_{\alpha }m_{\beta }}\Delta I_{jl,ik}^{\alpha \beta }\text{,}
\end{eqnarray*}%
where 
\begin{equation*}
\left( \mathbb{R}^{3}\right) ^{\perp _{\mathbf{n}}}=\left\{ \mathbf{w}\in 
\mathbb{R}^{3}:\mathbf{w}\perp \mathbf{n}\right\} .
\end{equation*}%
Here, see Figure $\ref{fig2}$,%
\begin{equation*}
\left\{ 
\begin{array}{l}
\boldsymbol{\xi }^{\prime }=\boldsymbol{\xi }_{\ast }+\mathbf{w}+\chi _{-}%
\mathbf{n} \\ 
\boldsymbol{\xi }_{\ast }^{\prime }=\boldsymbol{\xi }+\mathbf{w}+\chi _{+}%
\mathbf{n}%
\end{array}%
\right. \text{, with }\chi _{\pm }=\frac{\Delta I_{ik,jl}^{\alpha \beta }}{%
m_{\alpha }\left\vert \mathbf{g}\right\vert }\pm \frac{m_{\alpha }-m_{\beta }%
}{2m_{\beta }}\left\vert \mathbf{g}\right\vert \text{,}
\end{equation*}%
implying that the kinetic energy part of the exponent of the product $%
M_{\beta ,k}^{\prime }M_{\beta ,l\ast }^{\prime }$ equals 
\begin{eqnarray*}
&&m_{\beta }\frac{\left\vert \boldsymbol{\xi }^{\prime }\right\vert ^{2}}{2}%
+m_{\beta }\frac{\left\vert \boldsymbol{\xi }_{\ast }^{\prime }\right\vert
^{2}}{2} \\
&=&m_{\beta }\left\vert \frac{\boldsymbol{\xi +\xi }_{\ast }}{2}-\frac{%
\Delta I_{ik,jl}^{\alpha \beta }}{m_{\alpha }\left\vert \mathbf{g}%
\right\vert }\mathbf{n}+\mathbf{w}\right\vert ^{2}+\frac{m_{\alpha }^{2}}{%
4m_{\beta }}\left\vert \mathbf{g}\right\vert ^{2} \\
&=&m_{\beta }\left\vert \frac{\left( \boldsymbol{\xi +\xi }_{\ast }\right)
_{\perp _{\boldsymbol{n}}}}{2}+\mathbf{w}\right\vert ^{2}+m_{\beta }\left( 
\frac{\left( \boldsymbol{\xi +\xi }_{\ast }\right) _{\mathbf{n}}}{2}-\frac{%
\Delta I_{ik,jl}^{\alpha \beta }}{m_{\alpha }\left\vert \mathbf{g}%
\right\vert }\right) ^{2}+\frac{m_{\alpha }^{2}}{4m_{\beta }}\left\vert 
\mathbf{g}\right\vert ^{2} \\
&=&m_{\beta }\left\vert \frac{\left( \boldsymbol{\xi +\xi }_{\ast }\right)
_{\perp _{\boldsymbol{n}}}}{2}+\mathbf{w}\right\vert ^{2}+\frac{m_{\beta
}\left( m_{\alpha }\left( \left\vert \boldsymbol{\xi }_{\ast }\right\vert
^{2}-\left\vert \boldsymbol{\xi }\right\vert ^{2}\right) +2\Delta
I_{ik,jl}^{\alpha \beta }\right) ^{2}}{4m_{\alpha }^{2}\left\vert \mathbf{g}%
\right\vert ^{2}}+\frac{m_{\alpha }^{2}}{4m_{\beta }}\left\vert \mathbf{g}%
\right\vert ^{2},
\end{eqnarray*}%
where%
\begin{eqnarray*}
\left( \boldsymbol{\xi +\xi }_{\ast }\right) _{\mathbf{n}} &=&\left( 
\boldsymbol{\xi +\xi }_{\ast }\right) \cdot \mathbf{n}=\frac{\left\vert 
\boldsymbol{\xi }\right\vert ^{2}-\left\vert \boldsymbol{\xi }_{\ast
}\right\vert ^{2}}{\left\vert \boldsymbol{\xi }-\boldsymbol{\xi }_{\ast
}\right\vert },\text{\ and} \\
\left( \boldsymbol{\xi +\xi }_{\ast }\right) _{\perp _{\boldsymbol{n}}} &=&%
\boldsymbol{\xi +\xi }_{\ast }-\left( \boldsymbol{\xi +\xi }_{\ast }\right)
_{\mathbf{n}}\mathbf{n}.
\end{eqnarray*}%
Hence, by assumption $\left( \ref{est1}\right) $ and the Cauchy-Schwarz
inequality,%
\begin{eqnarray}
&&\left( k_{\alpha \beta ,ij}^{\left( \alpha \right) }(\boldsymbol{\xi },%
\boldsymbol{\xi }_{\ast })\right) ^{2}  \notag \\
&\leq &\frac{C}{\left\vert \mathbf{g}\right\vert ^{2}}\left(
\sum\limits_{k,l=1}^{r_{\beta }}\frac{1}{e^{\frac{I_{k}^{\beta
}+I_{l}^{\beta }}{2}}}\exp \left( -m_{\beta }\frac{\left( m_{\alpha }\left(
\left\vert \boldsymbol{\xi }_{\ast }\right\vert ^{2}-\left\vert \boldsymbol{%
\xi }\right\vert ^{2}\right) +2\Delta I_{ik,jl}^{\alpha \beta }\right) ^{2}}{%
8m_{\alpha }^{2}\left\vert \mathbf{g}\right\vert ^{2}}-\frac{m_{\alpha }^{2}%
}{8m_{\beta }}\left\vert \mathbf{g}\right\vert ^{2}\right) \right.   \notag
\\
&&\times \left. \sum\limits_{k,l=1}^{r_{\beta }}\int\limits_{\left( \mathbb{R%
}^{3}\right) ^{\perp _{\mathbf{n}}}}\left( 1+\frac{\mathbf{1}_{\left\vert 
\widetilde{\mathbf{g}}\right\vert ^{2}>2\widetilde{\Delta }I_{jl,ik}^{\alpha
\beta }}}{\left( \widetilde{\Psi }_{ik,jl}^{\alpha \beta }\right) ^{1-\gamma
/2}}\right) \exp \left( -\frac{m_{\beta }}{2}\left\vert \frac{\left( 
\boldsymbol{\xi +\xi }_{\ast }\right) _{\perp _{\boldsymbol{n}}}}{2}+\mathbf{%
w}\right\vert ^{2}\right) d\mathbf{w}\right) ^{2}  \notag \\
&\leq &\frac{C}{\left\vert \mathbf{g}\right\vert ^{2}}\left(
\sum\limits_{k,l=1}^{r_{\beta }}\exp \left( -m_{\beta }\frac{\left(
m_{\alpha }\!\left( \left\vert \boldsymbol{\xi }_{\ast }\right\vert
^{2}\!-\left\vert \boldsymbol{\xi }\right\vert ^{2}\right) +2\Delta
I_{ik,jl}^{\alpha \beta }\right) ^{2}}{8m_{\alpha }^{2}\left\vert \mathbf{g}%
\right\vert ^{2}}-\frac{m_{\alpha }^{2}}{8m_{\beta }}\left\vert \mathbf{g}%
\right\vert ^{2}\right) \right) ^{2}  \notag \\
&=&\frac{C}{\left\vert \mathbf{g}\right\vert ^{2}}\left(
\sum\limits_{k,l=1}^{r_{\beta }}\exp \left( -\frac{m_{\beta }}{8}\left(
\left\vert \mathbf{g}\right\vert +2\left\vert \boldsymbol{\xi }\right\vert
\cos \varphi +2\chi _{ik,jl}^{\alpha \beta }\right) ^{2}-\frac{m_{\alpha
}^{2}}{8m_{\beta }}\left\vert \mathbf{g}\right\vert ^{2}\right) \right) ^{2}
\notag \\
&\leq &\frac{C}{\left\vert \mathbf{g}\right\vert ^{2}}\sum%
\limits_{k,l=1}^{r_{\beta }}\exp \left( -m_{\beta }\left( \dfrac{\left\vert 
\mathbf{g}\right\vert }{2}+\left\vert \boldsymbol{\xi }\right\vert \cos
\varphi +\chi _{ik,jl}^{\alpha \beta }\right) ^{2}-\frac{m_{\alpha }^{2}}{%
4m_{\beta }}\left\vert \mathbf{g}\right\vert ^{2}\right) \text{, with } 
\notag \\
&&\chi _{ik,jl}^{\alpha \beta }=\chi _{ik,jl}^{\alpha \beta }\left(
\left\vert \mathbf{g}\right\vert \right) =\frac{\Delta I_{ik,jl}^{\alpha
\beta }}{m_{\alpha }\left\vert \mathbf{g}\right\vert }\text{, }\cos \varphi =%
\mathbf{n}\cdot \frac{\boldsymbol{\xi }}{\left\vert \boldsymbol{\xi }%
\right\vert }\text{, }  \notag \\
&&\widetilde{\Psi }_{ik,jl}^{\alpha \beta }=\left\vert \widetilde{\mathbf{g}}%
\right\vert \left\vert \mathbf{g}_{\ast }\right\vert \text{, and }\left\vert 
\mathbf{g}_{\ast }\right\vert ^{2}=\left\vert \widetilde{\mathbf{g}}%
\right\vert ^{2}-2\frac{m_{\alpha }+m_{\beta }}{m_{\alpha }m_{\beta }}\Delta
I_{ik,jl}^{\alpha \beta }\text{.}  \label{b2}
\end{eqnarray}%
Here, the second inequality follows by the following bound, which can be
obtained by noting that $\min \left( \left\vert \widetilde{\mathbf{g}}%
\right\vert ,\left\vert \mathbf{g}_{\ast }\right\vert \right) \geq
\left\vert \mathbf{w}\right\vert $, cf. Figure $\ref{fig2}$, and making a
change of variables $\mathbf{w}\rightarrow \widetilde{\mathbf{w}}=\left( 
\boldsymbol{\xi +\xi }_{\ast }\right) _{\perp _{\boldsymbol{n}}}/2+\mathbf{w}
$ followed by one to polar coordinates, 
\begin{eqnarray*}
&&\int_{\left( \mathbb{R}^{3}\right) ^{\perp _{\mathbf{n}}}}\left( 1+\frac{%
\mathbf{1}_{\left\vert \widetilde{\mathbf{g}}\right\vert ^{2}>2\widetilde{%
\Delta }I_{jl,ik}^{\alpha \beta }}}{\left( \widetilde{\Psi }_{ik,jl}^{\alpha
\beta }\right) ^{1-\gamma /2}}\right) \exp \left( -\frac{m_{\beta }}{2}%
\left\vert \frac{\left( \boldsymbol{\xi +\xi }_{\ast }\right) _{\perp _{%
\boldsymbol{n}}}}{2}+\mathbf{w}\right\vert ^{2}\right) d\mathbf{w} \\
&\leq &\int_{\left\vert \mathbf{w}\right\vert \leq 1}1+\left\vert \mathbf{w}%
\right\vert ^{\gamma -2}\,d\mathbf{w}+2\int_{\left\vert \mathbf{w}%
\right\vert \geq 1}\exp \left( -\frac{m_{\beta }}{2}\left\vert \frac{\left( 
\boldsymbol{\xi +\xi }_{\ast }\right) _{\perp _{\boldsymbol{n}}}}{2}+\mathbf{%
w}\right\vert ^{2}\right) d\mathbf{w} \\
&\leq &\int_{\left\vert \mathbf{w}\right\vert \leq 1}1+\left\vert \mathbf{w}%
\right\vert ^{\gamma -2}\,d\mathbf{w}+2\int_{\left( \mathbb{R}^{3}\right)
^{\perp _{\mathbf{n}}}}e^{-m_{\beta }\left\vert \widetilde{\mathbf{w}}%
\right\vert ^{2}/2}\,d\widetilde{\mathbf{w}} \\
&=&2\pi \left( \int_{0}^{1}R+R^{\gamma -1}\,dR+2\int_{\left( \mathbb{R}%
^{3}\right) ^{\perp _{\mathbf{n}}}}Re^{-m_{\beta }R^{2}/2}\,dR\right) =C%
\text{.}
\end{eqnarray*}

Then $k_{\alpha \beta ,ij}^{\left( \alpha \right) }(\boldsymbol{\xi },%
\boldsymbol{\xi }_{\ast })\mathbf{1}_{\mathfrak{h}_{N}}\in L^{2}\left( d%
\boldsymbol{\xi \,}d\boldsymbol{\xi }_{\ast }\right) $. Indeed, by changing
variables $\boldsymbol{\xi }\mathbf{_{\ast }}\rightarrow \mathbf{g}$, with $%
\mathbf{g}=\boldsymbol{\xi }-\boldsymbol{\xi }\mathbf{_{\ast }}$, and then
to spherical coordinates,

\begin{eqnarray*}
\int_{\mathfrak{h}_{N}}\left( k_{\alpha \beta ,ij}^{\left( \alpha \right) }(%
\boldsymbol{\xi },\boldsymbol{\xi }_{\ast })\right) ^{2}\,d\boldsymbol{\xi \,%
}d\boldsymbol{\xi }_{\ast } &\leq &\int_{\mathfrak{h}_{N}}\frac{C}{%
\left\vert \mathbf{g}\right\vert ^{2}}e^{-m_{\alpha }^{2}\left\vert \mathbf{g%
}\right\vert ^{2}/\left( 4m_{\beta }\right) }d\mathbf{g}\boldsymbol{\,}d%
\boldsymbol{\xi } \\
&=&C\int_{0}^{\infty }e^{-m_{\alpha }^{2}R^{2}/\left( 4m_{\beta }\right)
}dR\int_{0}^{N}\eta ^{2}d\eta \\
&=&CN^{3}\text{.}
\end{eqnarray*}%
Next we aim for proving that the integral of $k_{\alpha \beta ,ij}^{\left(
\alpha \right) }(\boldsymbol{\xi },\boldsymbol{\xi }_{\ast })$ with respect
to $\boldsymbol{\xi }$ over $\mathbb{R}^{3}$ is bounded in $\boldsymbol{\xi }%
_{\ast }$. Indeed, directly by the bound $\left( \ref{b2}\right) $ on $%
\left( k_{\alpha \beta ,ij}^{\left( \alpha \right) }\right) ^{2}$%
\begin{equation}
0\leq k_{\alpha \beta ,ij}^{\left( \alpha \right) }(\boldsymbol{\xi },%
\boldsymbol{\xi }_{\ast })\leq \frac{C}{\left\vert \mathbf{g}\right\vert }%
\sum\limits_{k,l=1}^{r_{\beta }}\exp \left( -\frac{m_{\beta }}{8}\left(
\left\vert \mathbf{g}\right\vert +2\left\vert \boldsymbol{\xi }\right\vert
\cos \varphi +2\chi _{ik,jl}^{\alpha \beta }\right) ^{2}-\frac{m_{\alpha
}^{2}}{8m_{\beta }}\left\vert \mathbf{g}\right\vert ^{2}\right) \text{.}
\label{b7}
\end{equation}%
Hence, due to the symmetry $k_{\alpha \beta ,ij}^{\left( \alpha \right) }(%
\boldsymbol{\xi },\boldsymbol{\xi }_{\ast })=k_{\alpha \beta ,ji}^{\left(
\alpha \right) }(\boldsymbol{\xi }_{\ast },\boldsymbol{\xi })$ $\left( \ref%
{sa1}\right) $, by a change of variables $\boldsymbol{\xi }\rightarrow 
\mathbf{g}=\boldsymbol{\xi }-\boldsymbol{\xi }\mathbf{_{\ast }}$, followed
by one to spherical coordinates, 
\begin{eqnarray*}
&&\int_{\mathbb{R}^{3}}k_{\alpha \beta ,ij}^{\left( \alpha \right) }(%
\boldsymbol{\xi },\boldsymbol{\xi }_{\ast })\,d\boldsymbol{\xi =}\int_{%
\mathbb{R}^{3}}k_{\alpha \beta ,ji}^{\left( \alpha \right) }(\boldsymbol{\xi 
}_{\ast },\boldsymbol{\xi })\,d\boldsymbol{\xi } \\
&\leq &\int_{\mathbb{R}^{3}}\frac{C}{\left\vert \mathbf{g}\right\vert }%
\sum\limits_{k,l=1}^{r_{\beta }}\exp \left( -\frac{m_{\alpha }^{2}}{%
8m_{\beta }}\left\vert \mathbf{g}\right\vert ^{2}\right) d\mathbf{g}%
=C\int_{0}^{\infty }Re^{-m_{\alpha }^{2}R^{2}/\left( 8m_{\beta }\right) }dR=C%
\text{.}
\end{eqnarray*}

Finally, heading for proving the uniform convergence of the integral of $%
k_{\alpha \beta ,ij}^{\left( \alpha \right) }$ with respect to $\boldsymbol{%
\xi }_{\ast }$ over the truncated domain $\mathfrak{h}_{N}$ to the one over
all of $\mathbb{R}^{3}$, the following bound on the integral over $\mathbb{R}%
^{3}$ can be obtained for $\left\vert \boldsymbol{\xi }\right\vert \neq 0$.
Indeed, by bound $\left( \ref{b7}\right) $, by changing variables $%
\boldsymbol{\xi }_{\ast }\rightarrow \mathbf{g}=\boldsymbol{\xi }-%
\boldsymbol{\xi }\mathbf{_{\ast }}$, then to (conventional) spherical
coordinates, with $\boldsymbol{\xi }$ as zenithal direction, and hence, $%
\varphi $ as polar angle, followed by the change of variables $\varphi
\rightarrow \eta =R+2\left\vert \boldsymbol{\xi }\right\vert \cos \varphi
+2\chi _{ik}^{jl}\left( R\right) $, with $d\eta =-2\left\vert \boldsymbol{%
\xi }\right\vert \sin \varphi \,d\varphi $,%
\begin{eqnarray}
&&\int_{\mathbb{R}^{3}}k_{\alpha \beta ,ij}^{\left( \alpha \right) }(%
\boldsymbol{\xi },\boldsymbol{\xi }_{\ast })\,d\boldsymbol{\xi }_{\ast } 
\notag \\
&\leq &\int_{\mathbb{R}^{3}}\frac{C}{\left\vert \mathbf{g}\right\vert }%
\sum\limits_{k,l=1}^{r_{\beta }}R\exp \left( -\frac{m_{\beta }}{8}\left(
R+2\left\vert \boldsymbol{\xi }\right\vert \cos \varphi +2\chi
_{ik,jl}^{\alpha \beta }(R)\right) ^{2}-\frac{m_{\alpha }^{2}}{8m_{\beta }}%
R^{2}\right) d\mathbf{g}  \notag \\
&=&C\sum\limits_{k,l=1}^{r_{\beta }}\int_{0}^{\infty }\int_{0}^{\pi }R\exp
\left( -\frac{m_{\beta }}{8}\left( R+2\left\vert \boldsymbol{\xi }%
\right\vert \cos \varphi +2\chi _{ik,jl}^{\alpha \beta }(R)\right) ^{2}-%
\frac{m_{\alpha }^{2}}{8m_{\beta }}R^{2}\right)   \notag \\
&&\times \sin \varphi \,d\varphi dR  \notag \\
&=&\frac{C}{\left\vert \boldsymbol{\xi }\right\vert }\sum%
\limits_{k,l=1}^{r_{\beta }}\int_{0}^{\infty }\int_{R+2\chi _{ik,jl}^{\alpha
\beta }(R)-2\left\vert \boldsymbol{\xi }\right\vert }^{R+2\chi
_{ik,jl}^{\alpha \beta }(R)+2\left\vert \boldsymbol{\xi }\right\vert
}Re^{-m_{\beta }\eta ^{2}/8}e^{-m_{\alpha }^{2}R^{2}/\left( 8m_{\beta
}\right) }d\eta dR  \notag \\
&\leq &\frac{C}{\left\vert \boldsymbol{\xi }\right\vert }\int_{0}^{\infty
}Re^{-m_{\alpha }^{2}R^{2}/\left( 8m_{\beta }\right) }\,dR\int_{-\infty
}^{\infty }e^{-m_{\beta }^{2}\eta /8}d\eta =\frac{C}{\left\vert \boldsymbol{%
\xi }\right\vert }\text{.}  \label{b7a}
\end{eqnarray}%
Then, by the bounds $\left( \ref{b7}\right) $ and $\left( \ref{b7a}\right) $%
, 
\begin{eqnarray*}
&&\sup_{\boldsymbol{\xi }\in \mathbb{R}^{3}}\int_{\mathbb{R}^{3}}k_{\alpha
\beta ,ij}^{\left( \alpha \right) }(\boldsymbol{\xi },\boldsymbol{\xi }%
_{\ast })-k_{\alpha \beta ,ij}^{\left( \alpha \right) }(\boldsymbol{\xi },%
\boldsymbol{\xi }_{\ast })\mathbf{1}_{\mathfrak{h}_{N}}\,d\boldsymbol{\xi }%
_{\ast } \\
&\leq &\sup_{\boldsymbol{\xi }\in \mathbb{R}^{3}}\int_{\left\vert \mathbf{g}%
\right\vert \leq \frac{1}{N}}k_{\alpha \beta ,ij}^{\left( \alpha \right) }(%
\boldsymbol{\xi },\boldsymbol{\xi }_{\ast })\,d\boldsymbol{\xi }_{\ast
}+\sup_{\left\vert \boldsymbol{\xi }\right\vert \geq N}\int_{\mathbb{R}%
^{3}}k_{\alpha \beta ,ij}^{\left( \alpha \right) }(\boldsymbol{\xi },%
\boldsymbol{\xi }_{\ast })\,d\boldsymbol{\xi }_{\ast } \\
&\leq &\int_{\left\vert \mathbf{g}\right\vert \leq \frac{1}{N}}\frac{C}{%
\left\vert \mathbf{g}\right\vert }\,d\mathbf{g}+\frac{C}{N}\leq C\left(
\int_{0}^{\frac{1}{N}}R\,dR+\frac{1}{N}\right)  \\
&=&C\left( \frac{1}{N^{2}}+\frac{1}{N}\right) \rightarrow 0\text{ as }%
N\rightarrow \infty \text{.}
\end{eqnarray*}%
Hence, by Lemma \ref{LGD} the operators 
\begin{equation*}
K_{\alpha \beta ,ij}^{(3)}=\int_{\mathbb{R}^{3}}k_{\alpha \beta ,ij}^{\left(
\alpha \right) }(\boldsymbol{\xi },\boldsymbol{\xi }_{\ast })\,h_{\alpha
,j\ast }\,d\boldsymbol{\xi }
\end{equation*}%
are compact on $L^{2}\left( d\boldsymbol{\xi }\right) $ for $\left\{
i,j\right\} \subseteq \left\{ 1,...,r_{\alpha }\right\} $ and $\left\{
\alpha ,\beta \right\} \subseteq \left\{ 1,...,s\right\} $.
\begin{figure}[h]
\centering
\includegraphics[width=0.5\textwidth]{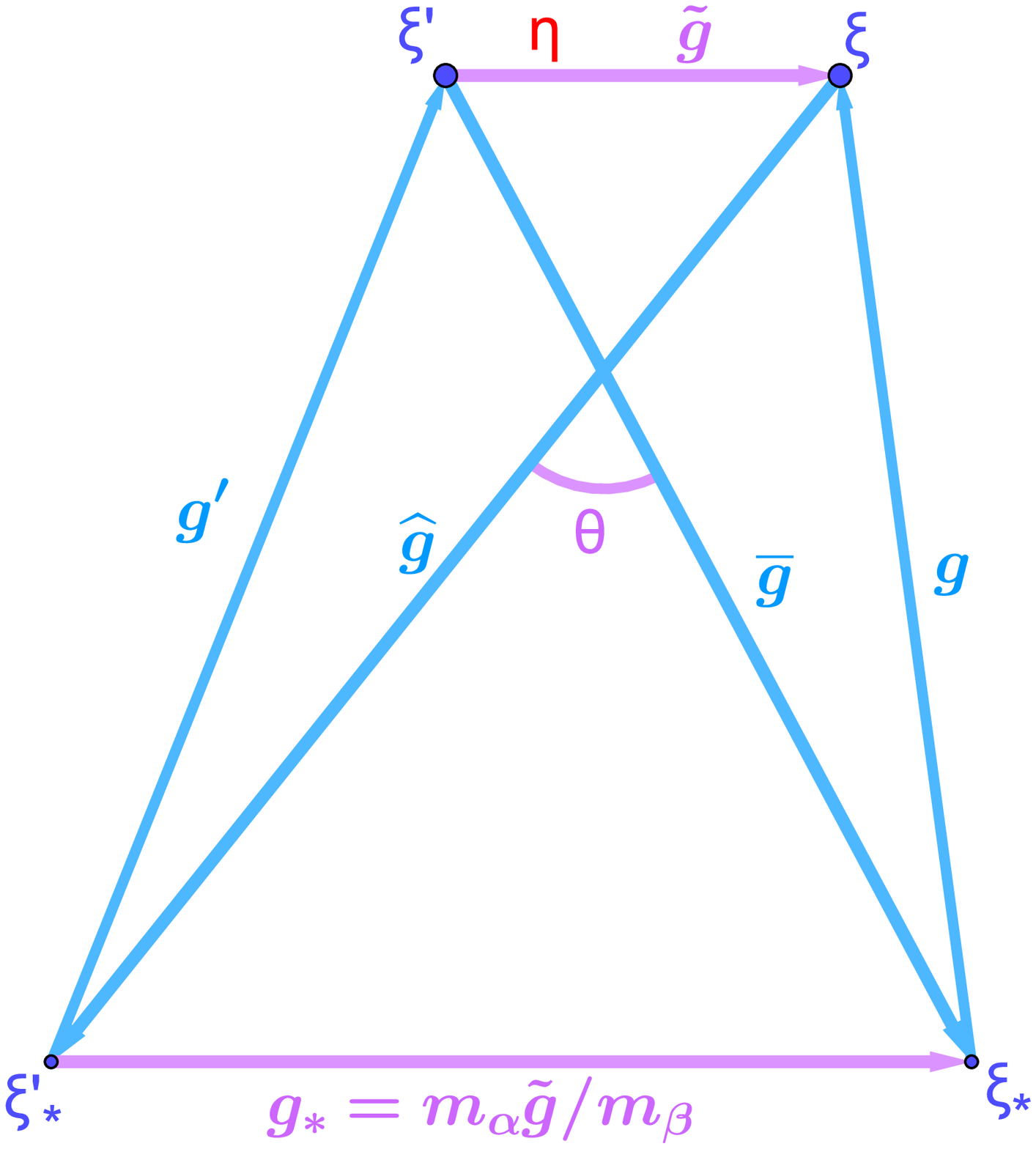}
\caption{Typical collision of $K_{\protect\alpha \protect\beta ,ij}^{(2)}$.}
\label{fig3}
\end{figure}

\textbf{III. Compactness of }$K_{\alpha \beta ,ij}^{(2)}=\int_{\mathbb{R}%
^{3}}k_{\alpha \beta ,ij}^{\left( \beta ,2\right) }(\boldsymbol{\xi },%
\boldsymbol{\xi }_{\ast })\,h_{\beta ,j\ast }\,d\boldsymbol{\xi }_{\ast }$
for $\left\{ \alpha ,\beta \right\} \subseteq \left\{ 1,...,s\right\} $ and $%
\left( i,j\right) \in \left\{ 1,...,r_{\alpha }\right\} \times \left\{
1,...,r_{\beta }\right\} $.

Firstly, assume that $m_{\alpha }\neq m_{\beta }$.

Assume that the internal energy gap $\Delta _{il,kj}^{\alpha \beta
}=I_{i}^{\alpha }+I_{l}^{\beta }-I_{k}^{\alpha }-I_{j}^{\beta }$ and the
velocities $\boldsymbol{\xi }$ and $\boldsymbol{\xi }_{\ast }$ are given.
Then a collision will be uniquely determined by a unit vector $\boldsymbol{%
\eta }=\left( \boldsymbol{\xi }-\boldsymbol{\xi }^{\prime }\right)
/\left\vert \boldsymbol{\xi }-\boldsymbol{\xi }^{\prime }\right\vert $, or, $%
\boldsymbol{\omega }=\left( \boldsymbol{\xi }^{\prime }-\boldsymbol{\xi }%
_{\ast }^{\prime }\right) /\left\vert \boldsymbol{\xi }^{\prime }-%
\boldsymbol{\xi }_{\ast }^{\prime }\right\vert $. This follows, since, by
conservation of momentum and total energy $\left( \ref{CI}\right) $
(reminding the relabeling of the velocities and internal energies), the
relation between $\left\vert \boldsymbol{\xi }-\boldsymbol{\xi }_{\ast
}^{\prime }\right\vert $ and $\left\vert \boldsymbol{\xi }^{\prime }-%
\boldsymbol{\xi }_{\ast }\right\vert $ (or, equivalently, between $%
\left\vert \boldsymbol{\xi }^{\prime }-\boldsymbol{\xi }_{\ast }^{\prime
}\right\vert $ and $\left\vert \boldsymbol{\xi }-\boldsymbol{\xi }_{\ast
}\right\vert $) can be obtained, while also $m_{\beta }\left( \boldsymbol{%
\xi }_{\ast }-\boldsymbol{\xi }_{\ast }^{\prime }\right) =m_{\alpha }\left( 
\boldsymbol{\xi }-\boldsymbol{\xi }^{\prime }\right) $, cf. Figure \ref{fig3}%
. Indeed, note that - with the aim to obtain expressions for $\left\vert 
\mathbf{g}^{\prime }\right\vert $ and $\mathbf{g}_{\alpha \beta }^{\prime }=%
\dfrac{m_{\alpha }\boldsymbol{\xi }_{\ast }^{\prime }-m_{\beta }\boldsymbol{%
\xi }^{\prime }}{m_{\alpha }-m_{\beta }}$ in the arguments of the
delta-functions,%
\begin{eqnarray*}
&&W_{\alpha \beta }(\boldsymbol{\xi },\boldsymbol{\xi }_{\ast }^{\prime
},I_{i}^{\alpha },I_{l}^{\beta }\left\vert \boldsymbol{\xi }^{\prime },%
\boldsymbol{\xi }_{\ast },I_{k}^{\alpha },I_{j}^{\beta }\right. ) \\
&=&\left( m_{\alpha }+m_{\beta }\right) ^{2}m_{\alpha }m_{\beta }\varphi
_{i}^{\alpha }\varphi _{l}^{\beta }\sigma _{il,kj}^{\alpha \beta }\frac{%
\left\vert \widehat{\mathbf{g}}\right\vert }{\left\vert \overline{\mathbf{g}}%
\right\vert }\delta _{3}\left( \left( m_{\alpha }-m_{\beta }\right) \left( 
\mathbf{g}_{\alpha \beta }-\mathbf{g}_{\alpha \beta }^{\prime }\right)
\right)  \\
&&\times \delta _{1}\left( \frac{m_{\alpha }m_{\beta }}{2\left( m_{\alpha
}-m_{\beta }\right) }\left( \left\vert \mathbf{g}^{\prime }\right\vert
^{2}-\left\vert \mathbf{g}\right\vert ^{2}\right) +\Delta _{il,kj}^{\alpha
\beta }\right)  \\
&=&\frac{\left( m_{\alpha }+m_{\beta }\right) ^{2}}{\left( m_{\alpha
}-m_{\beta }\right) ^{2}}\sigma _{il,kj}^{\alpha \beta }\mathbf{1}%
_{\left\vert \mathbf{g}\right\vert ^{2}>2\widehat{\Delta }I_{il,kj}^{\alpha
\beta }}\frac{\varphi _{i}^{\alpha }\varphi _{l}^{\beta }\left\vert \widehat{%
\mathbf{g}}\right\vert }{\left\vert \mathbf{g}^{\prime }\right\vert
\left\vert \overline{\mathbf{g}}\right\vert }\delta _{3}\left( \mathbf{g}%
_{\alpha \beta }-\mathbf{g}_{\alpha \beta }^{\prime }\right)  \\
&&\times \delta _{1}\left( \left\vert \mathbf{g}^{\prime }\right\vert -\sqrt{%
\left\vert \mathbf{g}\right\vert ^{2}-2\widehat{\Delta }_{il,kj}^{\alpha
\beta }}\right) \text{, with }\mathbf{g}=\boldsymbol{\xi }-\boldsymbol{\xi }%
_{\ast }\text{, }\mathbf{g}^{\prime }=\boldsymbol{\xi }^{\prime }-%
\boldsymbol{\xi }_{\ast }^{\prime }\text{, } \\
\text{ } &&\widehat{\mathbf{g}}=\boldsymbol{\xi }_{\ast }^{\prime }-%
\boldsymbol{\xi }\text{, }\overline{\mathbf{g}}=\boldsymbol{\xi }_{\ast }-%
\boldsymbol{\xi }^{\prime }\text{, }\mathbf{g}_{\alpha \beta }=\dfrac{%
m_{\alpha }\boldsymbol{\xi }-m_{\beta }\boldsymbol{\xi }_{\ast }}{m_{\alpha
}-m_{\beta }}\text{, }\mathbf{g}_{\alpha \beta }^{\prime }=\dfrac{m_{\alpha }%
\boldsymbol{\xi }^{\prime }-m_{\beta }\boldsymbol{\xi }_{\ast }^{\prime }}{%
m_{\alpha }-m_{\beta }}\text{,} \\
&&\text{ }\widehat{\Delta }_{il,kj}^{\alpha \beta }=\frac{m_{\alpha
}-m_{\beta }}{m_{\alpha }m_{\beta }}\Delta _{il,kj}^{\alpha \beta }\text{,
and }\Delta _{il,kj}^{\alpha \beta }=I_{i}^{\alpha }+I_{l}^{\beta
}-I_{k}^{\alpha }-I_{j}^{\beta }\text{.}
\end{eqnarray*}%
Then, by a change of variables $\left\{ \boldsymbol{\xi }^{\prime },%
\boldsymbol{\xi }_{\ast }^{\prime }\right\} \rightarrow \left\{ \!\mathbf{g}%
^{\prime }=\boldsymbol{\xi }^{\prime }-\boldsymbol{\xi }_{\ast }^{\prime },%
\mathbf{g}_{\alpha \beta }^{\prime }=\dfrac{m_{\alpha }\boldsymbol{\xi }%
^{\prime }-m_{\beta }\boldsymbol{\xi }_{\ast }^{\prime }}{m_{\alpha
}-m_{\beta }}\!\right\} $ and then to spherical coordinates, where%
\begin{equation*}
d\boldsymbol{\xi }^{\prime }d\boldsymbol{\xi }_{\ast }^{\prime }=d\mathbf{g}%
^{\prime }d\mathbf{g}_{\alpha \beta }^{\prime }=\left\vert \mathbf{g}%
^{\prime }\right\vert ^{2}d\left\vert \mathbf{g}^{\prime }\right\vert d%
\mathbf{g}_{\alpha \beta }^{\prime }d\boldsymbol{\omega }\text{, with\ }%
\boldsymbol{\omega }=\frac{\mathbf{g}^{\prime }}{\left\vert \mathbf{g}%
^{\prime }\right\vert }\text{,}
\end{equation*}%
the expression $\left( \ref{k1}\right) $ of $k_{\alpha \beta ,ij}^{\left(
\beta ,2\right) }$ may be transformed to%
\begin{eqnarray}
&&k_{\alpha \beta ,ij}^{\left( \beta ,2\right) }(\boldsymbol{\xi },\boldsymbol{%
\xi }_{\ast }) \notag \\
&=&\sum\limits_{k=1}^{r_{\alpha }}\sum\limits_{l=1}^{r_{\beta
}}\int_{\mathbb{R}^{3}\times \mathbb{R}_{+}\times \mathbb{S}^{2}}%
\frac{w_{\alpha \beta }(\boldsymbol{\xi },\boldsymbol{\xi }_{\ast }^{\prime
},I_{i}^{\alpha },I_{l}^{\beta }\left\vert \boldsymbol{\xi }^{\prime },%
\boldsymbol{\xi }_{\ast },I_{k}^{\alpha },I_{j}^{\beta }\right. )}{\left(
M_{\alpha ,i}M_{\beta ,j\ast }\right) ^{1/2}}\left\vert \mathbf{g}^{\prime
}\right\vert ^{2}d\left\vert \mathbf{g%
}^{\prime }\right\vert d\mathbf{g}_{\alpha \beta }^{\prime } d\boldsymbol{\omega }  \notag \\
&=&\frac{\left( m_{\alpha }+m_{\beta }\right) ^{2}}{\left( m_{\alpha
}-m_{\beta }\right) ^{2}}\sum\limits_{k=1}^{r_{\alpha
}}\sum\limits_{l=1}^{r_{\beta }}\int_{\mathbb{S}^{2}}\left( M_{\alpha
,k}^{\prime }M_{\beta ,l\ast }^{\prime }\right) ^{1/2}\frac{\left\vert 
\widehat{\mathbf{g}}\right\vert \left\vert \mathbf{g}^{\prime }\right\vert }{%
\left\vert \overline{\mathbf{g}}\right\vert }\left( \frac{\varphi
_{i}^{\alpha }\varphi _{l}^{\beta }}{\varphi _{k}^{\alpha }\varphi
_{j}^{\beta }}\right) ^{1/2}  \notag \\
&&\times \mathbf{1}_{\left\vert \mathbf{g}\right\vert ^{2}>2\widehat{\Delta }%
I_{kj,il}^{\alpha \beta }}\sigma _{il,kj}^{\alpha \beta }\left( \left\vert 
\widehat{\mathbf{g}}\right\vert ,\frac{\widehat{\mathbf{g}}\cdot \overline{%
\mathbf{g}}}{\left\vert \widehat{\mathbf{g}}\right\vert \left\vert \overline{%
\mathbf{g}}\right\vert }\right) \,d\boldsymbol{\omega }\text{.}  \label{exp1}
\end{eqnarray}%
Here, see Figure \ref{fig3},%
\begin{equation*}
\left\{ 
\begin{array}{l}
\boldsymbol{\xi }^{\prime }=\boldsymbol{\xi }-\left\vert \boldsymbol{\xi }-%
\boldsymbol{\xi }^{\prime }\right\vert \boldsymbol{\eta } \\ 
\boldsymbol{\xi }_{\ast }^{\prime }=\boldsymbol{\xi }_{\ast }-\dfrac{%
m_{\alpha }}{m_{\beta }}\left\vert \boldsymbol{\xi }-\boldsymbol{\xi }%
^{\prime }\right\vert \boldsymbol{\eta }%
\end{array}%
\right. \text{, with }\boldsymbol{\eta }=\frac{\boldsymbol{\xi }-\boldsymbol{%
\xi }^{\prime }}{\left\vert \boldsymbol{\xi }-\boldsymbol{\xi }^{\prime
}\right\vert }\in \mathbb{S}^{2}\text{.}
\end{equation*}%
Then, by Lemma $\ref{L2}$, since relation $\left( \ref{vrel3}\right) $
follows by energy conservation, we have the following relation between the
kinetic parts of \ the exponents of the products $\left( M_{\alpha }^{\prime
}M_{\beta \ast }^{\prime }\right) ^{2}$ and $\left( M_{\alpha }M_{\beta \ast
}\right) ^{2}$, respectively 
\begin{equation}
m_{\alpha }\left\vert \boldsymbol{\xi }^{\prime }\right\vert ^{2}+m_{\beta
}\left\vert \boldsymbol{\xi }_{\ast }^{\prime }\right\vert ^{2}\geq \rho
\left( m_{\alpha }\left\vert \boldsymbol{\xi }\right\vert ^{2}+m_{\beta
}\left\vert \boldsymbol{\xi }_{\ast }\right\vert ^{2}\right) -2\left\vert
\Delta _{kj,il}^{\alpha \beta }\right\vert ,  \label{ineq1}
\end{equation}%
for some positive number $\rho $, where $0<\rho <1$. However, also%
\begin{eqnarray}
&&k_{\alpha \beta ,ij}^{\left( \beta ,2\right) }(\boldsymbol{\xi },%
\boldsymbol{\xi }_{\ast })  \notag \\
&=&\sum\limits_{k=1}^{r_{\alpha }}\sum\limits_{l=1}^{r_{\beta }}\int_{%
\mathbb{S}^{2}}\left\vert \widehat{\mathbf{g}}\right\vert \left( M_{\alpha
,k}^{\prime }M_{\beta ,l\ast }^{\prime }\right) ^{1/2}\left( \frac{\varphi
_{i}^{\alpha }\varphi _{l}^{\beta }}{\varphi _{k}^{\alpha }\varphi
_{j}^{\beta }}\right) ^{1/2}\sigma _{il,kj}^{\alpha \beta }\left( \left\vert 
\widehat{\mathbf{g}}\right\vert ,\cos \theta \right) \mathbf{1}_{\left\vert 
\widehat{\mathbf{g}}\right\vert ^{2}>2\widetilde{\Delta }I_{kj,il}^{\alpha
\beta }}d\boldsymbol{\omega }\text{, }  \notag \\
&&\text{with }\cos \theta =\boldsymbol{\omega }\cdot \frac{\widehat{\mathbf{g%
}}}{\left\vert \widehat{\mathbf{g}}\right\vert }\text{ and }\widetilde{%
\Delta }I_{kj,il}^{\alpha \beta }=\frac{m_{\alpha }+m_{\beta }}{m_{\alpha
}m_{\beta }}\Delta I_{kj,il}^{\alpha \beta }\text{,}  \label{exp2}
\end{eqnarray}%
and%
\begin{eqnarray}
&&k_{\alpha \beta ,ij}^{\left( \beta ,2\right) }(\boldsymbol{\xi },%
\boldsymbol{\xi }_{\ast })  \notag \\
&=&\sum\limits_{k=1}^{r_{\alpha }}\sum\limits_{l=1}^{r_{\beta }}\int_{%
\mathbb{S}^{2}}\left\vert \overline{\mathbf{g}}\right\vert \left( M_{\alpha
,k}^{\prime }M_{\beta ,l\ast }^{\prime }\right) ^{1/2}\left( \frac{\varphi
_{k}^{\alpha }\varphi _{j}^{\beta }}{\varphi _{i}^{\alpha }\varphi
_{l}^{\beta }}\right) ^{1/2}\sigma _{kj,il}^{\alpha \beta }\left( \left\vert 
\overline{\mathbf{g}}\right\vert ,\cos \theta \right) \mathbf{1}_{\left\vert 
\overline{\mathbf{g}}\right\vert ^{2}>2\widetilde{\Delta }I_{il,kj}^{\alpha
\beta }}d\boldsymbol{\omega }\text{, }  \notag \\
&&\text{with }\cos \theta =\boldsymbol{\omega }\cdot \frac{\overline{\mathbf{%
g}}}{\left\vert \overline{\mathbf{g}}\right\vert }\text{ and }\widetilde{%
\Delta }I_{il,kj}^{\alpha \beta }=\frac{m_{\alpha }+m_{\beta }}{m_{\alpha
}m_{\beta }}\Delta I_{il,kj}^{\alpha \beta }\text{.}  \label{exp3}
\end{eqnarray}

\textbf{H1.} If $\min \left( \left\vert \widehat{\mathbf{g}}\right\vert
,\left\vert \overline{\mathbf{g}}\right\vert \right) \geq \left\vert \mathbf{%
g}\right\vert $ and $\max \left( \left\vert \widehat{\mathbf{g}}\right\vert
,\left\vert \overline{\mathbf{g}}\right\vert \right) \geq 1$, then $\Psi
_{il,kj}^{\alpha \beta }=\left\vert \widehat{\mathbf{g}}\right\vert
\left\vert \overline{\mathbf{g}}\right\vert \geq \left\vert \mathbf{g}%
\right\vert $, and, hence, since $\left\vert \mathbf{g}^{\prime }\right\vert
\leq C\left( 1+\left\vert \mathbf{g}\right\vert \right) $, by expression $%
\left( \ref{exp1}\right) $, assumption $\left( \ref{est1}\right) $, and
bound $\left( \ref{ineq1}\right) $, one may obtain the following bound%
\begin{eqnarray*}
&&\left( k_{\alpha \beta ,ij}^{\left( \beta ,2\right) }(\boldsymbol{\xi },%
\boldsymbol{\xi }_{\ast })\right) ^{2} \\
&\leq &Ce^{-\rho \left( m_{\alpha }\left\vert \boldsymbol{\xi }\right\vert
^{2}+m_{\beta }\left\vert \boldsymbol{\xi }_{\ast }\right\vert ^{2}\right)
/2}\sum\limits_{k=1}^{r_{\alpha }}\sum\limits_{l=1}^{r_{\beta }}\left(
1+\left\vert \mathbf{g}\right\vert \right) ^{2}\left( 1+\frac{1}{\left\vert 
\mathbf{g}\right\vert ^{1-\gamma /2}}\right) ^{2}\left( \int_{\mathbb{S}%
^{2}}d\boldsymbol{\omega }\right) ^{2} \\
&\leq &Ce^{-\rho \left( m_{\alpha }\left\vert \boldsymbol{\xi }\right\vert
^{2}+m_{\beta }\left\vert \boldsymbol{\xi }_{\ast }\right\vert ^{2}\right)
/2}\left( \left\vert \mathbf{g}\right\vert ^{2}+\frac{1}{\left\vert \mathbf{g%
}\right\vert ^{2}}\right)
\end{eqnarray*}

\textbf{H2.} If $\left\vert \widehat{\mathbf{g}}\right\vert \geq \left\vert 
\mathbf{g}\right\vert $ and $\left\vert \overline{\mathbf{g}}\right\vert
\leq $ $\max \left( 1,\left\vert \mathbf{g}\right\vert \right) \leq
1+\left\vert \mathbf{g}\right\vert $, then 
\begin{equation*}
\Psi _{kj,il}^{\alpha \beta }\mathbf{1}_{\left\vert \overline{\mathbf{g}}%
\right\vert ^{2}>2\widetilde{\Delta }I_{il,kj}^{\alpha \beta }}=\left\vert 
\overline{\mathbf{g}}\right\vert \sqrt{\left\vert \overline{\mathbf{g}}%
\right\vert ^{2}-2\widetilde{\Delta }I_{il,kj}^{\alpha \beta }}\mathbf{1}%
_{\left\vert \overline{\mathbf{g}}\right\vert ^{2}>2\widetilde{\Delta }%
I_{il,kj}^{\alpha \beta }}\leq C\left( 1+\left\vert \mathbf{g}\right\vert
\right) ^{2},
\end{equation*}%
and hence, by expression $\left( \ref{exp3}\right) $, assumption $\left( \ref%
{est1}\right) $, and inequality $\left( \ref{ineq1}\right) $, one may obtain
the following bound%
\begin{eqnarray*}
&&\left( k_{\alpha \beta ,ij}^{\left( \beta ,2\right) }(\boldsymbol{\xi },%
\boldsymbol{\xi }_{\ast })\right) ^{2} \\
&\leq &Ce^{-\rho \left( m_{\alpha }\left\vert \boldsymbol{\xi }\right\vert
^{2}+m_{\beta }\left\vert \boldsymbol{\xi }_{\ast }\right\vert ^{2}\right)
/2}\sum\limits_{k=1}^{r_{\alpha }}\sum\limits_{l=1}^{r_{\beta }}\frac{\left(
\left( 1+\left\vert \mathbf{g}\right\vert \right) ^{2}+\left( 1+\left\vert 
\mathbf{g}\right\vert \right) ^{\gamma }\right) ^{2}}{\left\vert \mathbf{g}%
\right\vert ^{2}}\left( \int_{\mathbb{S}^{2}}d\boldsymbol{\omega }\right)
^{2} \\
&\leq &Ce^{-\rho \left( m_{\alpha }\left\vert \boldsymbol{\xi }\right\vert
^{2}+m_{\beta }\left\vert \boldsymbol{\xi }_{\ast }\right\vert ^{2}\right)
/2}\left( \left\vert \mathbf{g}\right\vert ^{2}+\frac{1}{\left\vert \mathbf{g%
}\right\vert ^{2}}\right)
\end{eqnarray*}

\textbf{H3.} If $\left\vert \overline{\mathbf{g}}\right\vert \geq \left\vert 
\mathbf{g}\right\vert $ and $\left\vert \widehat{\mathbf{g}}\right\vert \leq 
$ $\max \left( 1,\left\vert \mathbf{g}\right\vert \right) \leq 1+\left\vert 
\mathbf{g}\right\vert $, then 
\begin{equation*}
\Psi _{il,kj}^{\alpha \beta }\mathbf{1}_{\left\vert \widehat{\mathbf{g}}%
\right\vert ^{2}>2\widetilde{\Delta }I_{kl,ij}^{\alpha \beta }}=\left\vert 
\widehat{\mathbf{g}}\right\vert \sqrt{\left\vert \widehat{\mathbf{g}}%
\right\vert ^{2}-2\widetilde{\Delta }I_{kj,il}^{\alpha \beta }}\mathbf{1}%
_{\left\vert \widehat{\mathbf{g}}\right\vert ^{2}>2\widetilde{\Delta }%
I_{kj,il}^{\alpha \beta }}\leq C\left( 1+\left\vert \mathbf{g}\right\vert
\right) ^{2},
\end{equation*}%
and hence, by expression $\left( \ref{exp2}\right) $, assumption $\left( \ref%
{est1}\right) $, and inequality $\left( \ref{ineq1}\right) $, the bound%
\begin{equation}
\left( k_{\alpha \beta ,ij}^{\left( \beta ,2\right) }(\boldsymbol{\xi },%
\boldsymbol{\xi }_{\ast })\right) ^{2}\leq Ce^{-\rho \left( m_{\alpha
}\left\vert \boldsymbol{\xi }\right\vert ^{2}+m_{\beta }\left\vert 
\boldsymbol{\xi }_{\ast }\right\vert ^{2}\right) /2}\left( \left\vert 
\mathbf{g}\right\vert ^{2}+\frac{1}{\left\vert \mathbf{g}\right\vert ^{2}}%
\right)   \label{b6}
\end{equation}%
may again be obtained. However, it is clear that $\left\vert \mathbf{g}%
\right\vert \leq \max \left( \left\vert \widehat{\mathbf{g}}\right\vert
,\left\vert \overline{\mathbf{g}}\right\vert \right) $, why all
possibilities are covered by the cases \textbf{H1-3} above. Therefore, we
have the general bound $\left( \ref{b6}\right) $ for $\left( k_{\alpha \beta
,ij}^{\left( \beta ,2\right) }(\boldsymbol{\xi },\boldsymbol{\xi }_{\ast
})\right) ^{2}.$

By applying the bound $\left( \ref{b6}\right) $ and then first changing
variables of integration $\left\{ \boldsymbol{\xi },\boldsymbol{\xi }_{\ast
}\right\} \rightarrow \left\{ \mathbf{g},\mathbf{G}_{\alpha \beta }=\dfrac{%
m_{\alpha }\boldsymbol{\xi }+m_{\beta }\boldsymbol{\xi }_{\ast }}{m_{\alpha
}+m_{\beta }}\right\} $, with unitary Jacobian, followed by a change to
spherical coordinates, reminding the expression $\left( \ref{m2}\right) $, 
\begin{eqnarray*}
&&\int_{\left( \mathbb{R}^{3}\right) ^{2}}\left( k_{\alpha \beta
,ij}^{\left( \beta ,2\right) }(\boldsymbol{\xi },\boldsymbol{\xi }_{\ast
})\right) ^{2}d\boldsymbol{\xi \,}d\boldsymbol{\xi }_{\ast } \\
&\leq &C\int_{\left( \mathbb{R}^{3}\right) ^{2}}e^{-\rho \left( m_{\alpha
}+m_{\beta }\right) \left\vert \mathbf{G}_{\alpha \beta }\right\vert
^{2}/2-\rho m_{\alpha }m_{\beta }\left\vert \mathbf{g}\right\vert
^{2}/\left( 2\left( m_{\alpha }+m_{\beta }\right) \right) }\left( \left\vert 
\mathbf{g}\right\vert ^{2}+\frac{1}{\left\vert \mathbf{g}\right\vert ^{2}}%
\right) d\mathbf{g}\boldsymbol{\,}d\mathbf{G}_{\alpha \beta } \\
&\leq &C\int_{0}^{\infty }R^{2}e^{-R^{2}}dR\int_{0}^{\infty }\left( 1+\eta
^{4}\right) e^{-\rho m_{\alpha }m_{\beta }\eta ^{2}/\left( 2\left( m_{\alpha
}+m_{\beta }\right) \right) }d\eta =C\text{.}
\end{eqnarray*}%
Hence,%
\begin{equation*}
K_{\alpha \beta ,ij}^{(2)}=\int_{\mathbb{R}^{3}}k_{\alpha \beta ,ij}^{\left(
\beta ,2\right) }(\boldsymbol{\xi },\boldsymbol{\xi }_{\ast })\,h_{\beta
,j\ast }\,d\boldsymbol{\xi }_{\ast }
\end{equation*}%
are Hilbert-Schmidt integral operators, and as such continuous and compact
on $L^{2}\left( d\boldsymbol{\xi }\right) $ \cite[Theorem 7.83]%
{RenardyRogers}, for $\left( i,j\right) \in \left\{ 1,...,r_{\alpha
}\right\} \times \left\{ 1,...,r_{\beta }\right\} $ and $\left\{ \alpha
,\beta \right\} \subseteq \left\{ 1,...,s\right\} $.

On the other hand, if $m_{\alpha }=m_{\beta }$, then 
\begin{eqnarray*}
&&k_{\alpha \beta ,ij}^{\left( \beta ,2\right) }(\boldsymbol{\xi },%
\boldsymbol{\xi }_{\ast })=\sum\limits_{k=1}^{r_{\alpha
}}\sum\limits_{l=1}^{r_{\beta }}\int_{\left( \mathbb{R}^{3}\right) ^{\perp _{%
\mathbf{n}}}}4\frac{\left\vert \widehat{\mathbf{g}}\right\vert \left(
M_{\alpha ,k}^{\prime }M_{\beta ,l\ast }^{\prime }\right) ^{1/2}}{\left\vert 
\overline{\mathbf{g}}\right\vert \left\vert \mathbf{g}\right\vert }\left( 
\frac{\varphi _{i}^{\alpha }\varphi _{l}^{\beta }}{\varphi _{k}^{\alpha
}\varphi _{j}^{\beta }}\right) ^{1/2} \\
&&\times \mathbf{1}_{\left\vert \widehat{\mathbf{g}}\right\vert ^{2}>4\Delta
I_{kj,il}^{\alpha \beta }/m_{\alpha }}\sigma _{il,kj}^{\alpha \beta }\left(
\left\vert \widehat{\mathbf{g}}\right\vert ,-\frac{\widehat{\mathbf{g}}\cdot 
\overline{\mathbf{g}}}{\left\vert \widehat{\mathbf{g}}\right\vert \left\vert 
\overline{\mathbf{g}}\right\vert }\right) d\mathbf{w}\text{, with }\widehat{%
\mathbf{g}}=\boldsymbol{\xi }-\boldsymbol{\xi }_{\ast }^{\prime }\text{ and }%
\overline{\mathbf{g}}=\boldsymbol{\xi }_{\ast }-\boldsymbol{\xi }.
\end{eqnarray*}%
Here%
\begin{eqnarray*}
&&\left\{ 
\begin{array}{l}
\boldsymbol{\xi }^{\prime }=\boldsymbol{\xi }+\mathbf{w}-\chi \mathbf{n} \\ 
\boldsymbol{\xi }_{\ast }^{\prime }=\boldsymbol{\xi }_{\ast }+\mathbf{w}%
-\chi \mathbf{n}%
\end{array}%
\right. \text{, where }\mathbf{w}\perp \mathbf{n}\text{ and }\chi =\chi
_{kj,il}^{\alpha \beta }=\frac{\Delta I_{kj,il}^{\alpha \beta }}{m\left\vert 
\mathbf{g}\right\vert }\text{,} \\
&&\text{ with }\mathbf{g}=\boldsymbol{\xi }-\boldsymbol{\xi }_{\ast }\text{
and }\mathbf{n=}\frac{\mathbf{g}}{\left\vert \mathbf{g}\right\vert }\text{.}
\end{eqnarray*}%
Then similar arguments to the ones for $k_{\alpha \beta ,ij}^{\left( \alpha
\right) }(\boldsymbol{\xi },\boldsymbol{\xi }_{\ast })$ (with $m_{\alpha
}=m_{\beta }$) above, can be applied.

Concluding, the operator 
\begin{eqnarray*}
&&K=(K_{1},...,K_{s})=\sum\limits_{\beta =1}^{s}\left( (K_{1\beta
}^{(3)},...,K_{s\beta }^{(3)})-(K_{1\beta }^{(1)},...,K_{s\beta
}^{(1)})+(K_{1\beta }^{(2)},...,K_{s\beta }^{(2)})\right) \text{,} \\
&&\text{with }K_{\alpha \beta }^{(i)}=\sum\limits_{j=1}^{r_{\beta
}}(K_{\alpha \beta ,1j}^{(i)},...,K_{\alpha \beta ,r_{\alpha }j}^{(i)})\text{
for }i\in \left\{ 1,2,3\right\} \text{and}\left\{ \alpha ,\beta \right\}
\subseteq \left\{ 1,...,s\right\} \!\text{,}
\end{eqnarray*}%
is a compact self-adjoint operator on $\left( L^{2}\left( d\boldsymbol{\xi }%
\right) \right) ^{r}$. Self-adjointness is due to the symmetry relations $%
\left( \ref{sa1}\right) ,\left( \ref{sa2}\right) $, cf. \cite[p.198]%
{Yoshida-65}.
\end{proof}

\section{Bounds on the collision frequency \label{PT2}}

This section concerns the proof of Theorem \ref{Thm2}. Note that throughout
the proof, $C$ will denote a generic positive constant.

\begin{proof}
Under assumption $\left( \ref{e1}\right) $ the collision frequencies $\nu
_{\alpha ,i}$, by the change of variables $\left\{ \boldsymbol{\xi }^{\prime
},\boldsymbol{\xi }_{\ast }^{\prime }\right\} \rightarrow \left\{ \left\vert 
\mathbf{g}^{\prime }\right\vert ,\boldsymbol{\omega }=\dfrac{\mathbf{g}%
^{\prime }}{\left\vert \mathbf{g}^{\prime }\right\vert },\mathbf{G}_{\alpha
\beta }^{\prime }=\dfrac{m_{\alpha }\boldsymbol{\xi }^{\prime }+m_{\beta }%
\boldsymbol{\xi }_{\ast }^{\prime }}{m_{\alpha }+m_{\beta }}\right\} $,
equal 
\begin{eqnarray*}
&&\nu _{\alpha ,i} \\
&=&\sum\limits_{\beta =1}^{s}\sum\limits_{k=1}^{r_{\alpha
}}\sum\limits_{j,l=1}^{r_{\beta }}\int_{\left( \mathbb{R}^{3}\right)
^{2}\times \mathbb{R}_{+}\times \mathbb{S}^{2}}\frac{M_{\beta ,j\ast }}{%
\varphi _{i}^{\alpha }\varphi _{j}^{\beta }}W_{\alpha \beta }\left\vert 
\mathbf{g}^{\prime }\right\vert ^{2}d\boldsymbol{\xi }_{\ast }d\left\vert \mathbf{g}^{\prime }\right\vert d\mathbf{G}%
_{\alpha \beta }^{\prime }d%
\boldsymbol{\omega } \\
&=&4\pi \sum\limits_{\beta =1}^{s}\dfrac{n_{\beta }\varphi _{j}^{\beta
}m_{\beta }^{3/2}}{\left( 2\pi \right) ^{3/2}q_{\alpha }}\sum%
\limits_{k=1}^{r_{\alpha }}\sum\limits_{j,l=1}^{r_{\beta }}\int_{\mathbb{R}%
^{3}}e^{-I_{j}^{\beta }-m_{\beta }\left\vert \boldsymbol{\xi }_{\ast
}\right\vert ^{2}/2}\sigma _{ij,kl}^{\alpha \beta }\mathbf{1}_{\left\vert 
\mathbf{g}\right\vert ^{2}>2\widetilde{\Delta }I_{kl,ij}^{\alpha \beta
}}\left\vert \mathbf{g}\right\vert d\boldsymbol{\xi }_{\ast } \\
&=&\sum\limits_{\beta =1}^{s}C_{\alpha \beta }\frac{\sqrt{2}n_{\beta
}m_{\beta }^{3/2}}{\varphi _{i}^{\alpha }q_{\alpha }\sqrt{\pi }}%
\sum\limits_{k=1}^{r_{\alpha }}\sum\limits_{j,l=1}^{r_{\beta }}\int\limits_{%
\mathbb{R}^{3}}e^{-I_{j}^{\beta }-m_{\beta }\left\vert \boldsymbol{\xi }%
_{\ast }\right\vert ^{2}/2}\sqrt{\left\vert \mathbf{g}\right\vert ^{2}-2%
\widetilde{\Delta }I_{kl,ij}^{\alpha \beta }}\mathbf{1}_{\left\vert \mathbf{g%
}\right\vert ^{2}>2\widetilde{\Delta }I_{kl,ij}^{\alpha \beta }}d%
\boldsymbol{\xi }_{\ast }\text{,} \\
&&\text{with }\widetilde{\Delta }I_{kl,ij}^{\alpha \beta }=\frac{m_{\alpha
}+m_{\beta }}{m_{\alpha }m_{\beta }}\Delta I_{ij,kl}^{\alpha \beta }\text{,}
\end{eqnarray*}%
for $i\in \left\{ 1,...,r_{\alpha }\right\} $ and $\alpha \in \left\{
1,...,s\right\} $.

Clearly, for any $i\in \left\{ 1,...,r_{\alpha }\right\} $ and $\alpha \in
\left\{ 1,...,s\right\} ,$ since $\Delta I_{ii,ii}^{\alpha \alpha }=0$,%
\begin{eqnarray*}
\nu _{\alpha ,i} &\geq &C_{\alpha \alpha }\frac{\sqrt{2}n_{\alpha }m_{\alpha
}^{3/2}}{\varphi _{i}^{\alpha }q_{\alpha }\sqrt{\pi }}e^{-I_{i}^{\alpha
}}\int_{\mathbb{R}^{3}}e^{-m_{\alpha }\left\vert \boldsymbol{\xi }_{\ast
}\right\vert ^{2}/2}\left\vert \mathbf{g}\right\vert \,d\boldsymbol{\xi }%
_{\ast } \\
&=&C\int_{\mathbb{R}^{3}}\left\vert \mathbf{g}\right\vert e^{-m_{\alpha
}\left\vert \boldsymbol{\xi }_{\ast }\right\vert ^{2}/2}\,d\boldsymbol{\xi }%
_{\ast }\text{.}
\end{eqnarray*}

Now consider the two different cases $\left\vert \boldsymbol{\xi }%
\right\vert \leq 1$ and $\left\vert \boldsymbol{\xi }\right\vert \geq 1$
separately. Firstly, if $\left\vert \boldsymbol{\xi }\right\vert \geq 1$,
then, for any $i\in \left\{ 1,...,r_{\alpha }\right\} $ and $\alpha \in
\left\{ 1,...,s\right\} $, by a trivial estimate 
\begin{eqnarray*}
\nu _{\alpha ,i} &\geq &C\int_{\mathbb{R}^{3}}e^{-m_{\alpha }\left\vert 
\boldsymbol{\xi }_{\ast }\right\vert ^{2}}\left\vert \left\vert \boldsymbol{%
\xi }\right\vert -\left\vert \boldsymbol{\xi }_{\ast }\right\vert
\right\vert \,d\boldsymbol{\xi }_{\ast } \\
&\geq &C\int_{\left\vert \boldsymbol{\xi }_{\ast }\right\vert \leq
1/2}e^{-m_{\alpha }\left\vert \boldsymbol{\xi }_{\ast }\right\vert
^{2}}\left( \left\vert \boldsymbol{\xi }\right\vert -\left\vert \boldsymbol{%
\xi }_{\ast }\right\vert \right) \,d\boldsymbol{\xi }_{\ast } \\
&\geq &Ce^{-m_{\alpha }/4}\frac{\left\vert \boldsymbol{\xi }\right\vert }{2}%
\int_{\left\vert \boldsymbol{\xi }_{\ast }\right\vert \leq 1/2}\,d%
\boldsymbol{\xi }_{\ast }\geq C\left\vert \boldsymbol{\xi }\right\vert \geq
C\left( 1+\left\vert \boldsymbol{\xi }\right\vert \right) \text{.}
\end{eqnarray*}%
Secondly, if $\left\vert \boldsymbol{\xi }\right\vert \leq 1$, then, for any 
$i\in \left\{ 1,...,r_{\alpha }\right\} $ and $\alpha \in \left\{
1,...,s\right\} $, by trivial estimates and a change to spherical
coordinates, 
\begin{eqnarray*}
\nu _{\alpha ,i} &\geq &C\int_{\mathbb{R}^{3}}e^{-m_{\alpha }\left\vert 
\boldsymbol{\xi }_{\ast }\right\vert ^{2}}\left\vert \left\vert \boldsymbol{%
\xi }\right\vert -\left\vert \boldsymbol{\xi }_{\ast }\right\vert
\right\vert \,d\boldsymbol{\xi }_{\ast } \\
&\geq &C\int_{\left\vert \boldsymbol{\xi }_{\ast }\right\vert \geq
2}e^{-m_{\alpha }\left\vert \boldsymbol{\xi }_{\ast }\right\vert ^{2}}\left(
\left\vert \boldsymbol{\xi }_{\ast }\right\vert -\left\vert \boldsymbol{\xi }%
\right\vert \right) \,d\boldsymbol{\xi }_{\ast } \\
&\geq &C\int_{\left\vert \boldsymbol{\xi }_{\ast }\right\vert \geq
2}e^{-m_{\alpha }\left\vert \boldsymbol{\xi }_{\ast }\right\vert ^{2}}\frac{%
\left\vert \boldsymbol{\xi }_{\ast }\right\vert }{2}\,d\boldsymbol{\xi }%
_{\ast } \\
&=&C\int_{2}^{\infty }R^{3}e^{-m_{\alpha }R^{2}}\,dR=C\geq C\left(
1+\left\vert \boldsymbol{\xi }\right\vert \right) \text{.}
\end{eqnarray*}%
Hence, there is a positive constant $\nu _{-}>0$, such that $\nu _{\alpha
,i}\geq \nu _{-}\left( 1+\left\vert \boldsymbol{\xi }\right\vert \right) $
for all $i\in \left\{ 1,...,r_{\alpha }\right\} $, $\alpha \in \left\{
1,...,s\right\} $, and $\boldsymbol{\xi }\in \mathbb{R}^{3}$.

On the other hand, by some estimates and a change to spherical coordinates, 
\begin{eqnarray*}
\nu _{\alpha ,i} &\leq &C\sum\limits_{\beta
=1}^{s}\sum\limits_{k=1}^{r_{\alpha }}\sum\limits_{j,l=1}^{r_{\beta }}\int_{%
\mathbb{R}^{3}}e^{-m_{\beta }\left\vert \boldsymbol{\xi }_{\ast }\right\vert
^{2}/2}\sqrt{\left\vert \mathbf{g}\right\vert ^{2}-2\widetilde{\Delta }%
I_{kl,ij}^{\alpha \beta }}\,d\boldsymbol{\xi }_{\ast } \\
&\leq &C\sum\limits_{\beta =1}^{s}\sum\limits_{k=1}^{r_{\alpha
}}\sum\limits_{j,l=1}^{r_{\beta }}\int_{\mathbb{R}^{3}}\left( 1+\left\vert 
\boldsymbol{\xi }\right\vert +\left\vert \boldsymbol{\xi }_{\ast
}\right\vert \right) e^{-m_{\beta }\left\vert \boldsymbol{\xi }_{\ast
}\right\vert ^{2}/2}\,d\boldsymbol{\xi }_{\ast } \\
&\leq &C\left\vert \boldsymbol{\xi }\right\vert \sum\limits_{\beta
=1}^{s}\int_{0}^{\infty }R^{2}e^{-m_{\beta }R^{2}/2}dR+C\sum\limits_{\beta
=1}^{s}\int_{0}^{\infty }e^{-m_{\beta }R^{2}/2}\left( R^{2}+R^{3}\right) \,dR
\\
&\leq &C\left( 1+\left\vert \boldsymbol{\xi }\right\vert \right) \text{.}
\end{eqnarray*}%
Hence, there is a positive constant $\nu _{+}>0$, such that $\nu _{\alpha
,i}\leq \nu _{+}\left( 1+\left\vert \boldsymbol{\xi }\right\vert \right) $
for all $i\in \left\{ 1,...,r_{\alpha }\right\} $, $\alpha \in \left\{
1,...,s\right\} $, and $\boldsymbol{\xi }\in \mathbb{R}^{3}$.
\end{proof}

\section{Appendix: Proof of Lemma $\ref{L3}$}\label{secA1}

This appendix concerns a proof of Lemma $\ref{L3}$.

\begin{proof}
Denote $q:=\left\vert \boldsymbol{\xi }-\boldsymbol{\xi }^{\prime
}\right\vert $. For $\boldsymbol{\eta =}\left( \boldsymbol{\boldsymbol{\xi }-%
\boldsymbol{\xi }^{\prime }}\right) \boldsymbol{/}q$ the following (unique)
decomposition can be made, by the relations $\left( \ref{vrel2}\right) $,
cf. Figure $\ref{fig3}$, 
\begin{equation*}
\left\{ 
\begin{array}{l}
\boldsymbol{\xi }=\mathbf{w}+r\boldsymbol{\eta } \\ 
\boldsymbol{\xi }^{\prime }=\mathbf{w}+\left( r-q\right) \boldsymbol{\eta }%
\end{array}%
\right. \text{, with }\mathbf{w}\perp \boldsymbol{\eta }\text{, \ }
\end{equation*}%
while%
\begin{equation*}
\left\{ 
\begin{array}{l}
\boldsymbol{\xi }_{\ast }=\widetilde{\mathbf{w}}+r_{\ast }\boldsymbol{\eta }
\\ 
\boldsymbol{\xi }_{\ast }^{\prime }=\widetilde{\mathbf{w}}+\left( r_{\ast }-%
\dfrac{m_{\alpha }}{m_{\beta }}q\right) \boldsymbol{\eta }%
\end{array}%
\right. \text{, with }\widetilde{\mathbf{w}}\perp \boldsymbol{\eta }\text{,}
\end{equation*}%
where it follows by relation $\left( \ref{vrel3}\right) $ that 
\begin{equation*}
r_{\ast }=r-\chi +\frac{m_{\alpha }-m_{\beta }}{2m_{\beta }}q\text{, with }%
\chi =\chi _{kj,il}^{\alpha \beta }=\frac{\Delta I_{kj,il}^{\alpha \beta }}{%
m_{\alpha }q}\text{,}
\end{equation*}%
implying that%
\begin{equation*}
\left\{ 
\begin{array}{l}
\boldsymbol{\xi }_{\ast }=\widetilde{\mathbf{w}}+\left( r-\chi +\dfrac{%
m_{\alpha }-m_{\beta }}{2m_{\beta }}q\right) \boldsymbol{\eta } \\ 
\boldsymbol{\xi }_{\ast }^{\prime }=\widetilde{\mathbf{w}}+\left( r-\chi -%
\dfrac{m_{\alpha }+m_{\beta }}{2m_{\beta }}q\right) \boldsymbol{\eta }%
\end{array}%
\right. \text{, with }\widetilde{\mathbf{w}}\perp \boldsymbol{\eta }\text{. }
\end{equation*}%
Then%
\begin{eqnarray*}
&&m_{\alpha }\left\vert \boldsymbol{\xi }^{\prime }\right\vert ^{2}+m_{\beta
}\left\vert \boldsymbol{\xi }_{\ast }^{\prime }\right\vert ^{2} \\
&=&m_{\alpha }\left\vert \mathbf{w}\right\vert ^{2}+m_{\beta }\left\vert 
\widetilde{\mathbf{w}}\right\vert ^{2}+\left( m_{\alpha }+m_{\beta }\right)
\left( r^{2}+\chi q\right) -\left( 3m_{\alpha }+m_{\beta }\right) qr \\
&&+\frac{m_{\alpha }^{2}+6m_{\alpha }m_{\beta }+m_{\beta }^{2}}{4m_{\beta }}%
q^{2}+m_{\beta }\left( \chi ^{2}-2\chi r\right)  \\
&=&m_{\alpha }\left\vert \mathbf{w}\right\vert ^{2}+m_{\beta }\left\vert 
\widetilde{\mathbf{w}}\right\vert ^{2}+\left( m_{\alpha }+m_{\beta }\right)
\left( r-\frac{2m_{\beta }\chi +\left( 3m_{\alpha }+m_{\beta }\right) q}{%
2\left( m_{\alpha }+m_{\beta }\right) }\right) ^{2} \\
&&+\frac{m_{\alpha }m_{\beta }}{m_{\alpha }+m_{\beta }}\left( \chi +\frac{%
m_{\alpha }-m_{\beta }}{2m_{\beta }}q\right) ^{2}\text{,}
\end{eqnarray*}%
while%
\begin{eqnarray*}
&&m_{\alpha }\left\vert \boldsymbol{\xi }\right\vert ^{2}+m_{\beta
}\left\vert \boldsymbol{\xi }_{\ast }\right\vert ^{2} \\
&=&m_{\alpha }\left\vert \mathbf{w}\right\vert ^{2}+m_{\beta }\left\vert 
\widetilde{\mathbf{w}}\right\vert ^{2}+\left( m_{\alpha }+m_{\beta }\right)
r^{2}+\left( m_{\alpha }-m_{\beta }\right) q\left( r-\chi \right)  \\
&&+\frac{\left( m_{\alpha }-m_{\beta }\right) ^{2}}{4m_{\beta }}%
q^{2}+m_{\beta }\left( \chi ^{2}-2\chi r\right)  \\
&=&m_{\alpha }\left\vert \mathbf{w}\right\vert ^{2}+m_{\beta }\left\vert 
\widetilde{\mathbf{w}}\right\vert ^{2}+\left( m_{\alpha }+m_{\beta }\right)
\left( r+\frac{\left( m_{\alpha }-m_{\beta }\right) q-2m_{\beta }\chi }{%
2\left( m_{\alpha }+m_{\beta }\right) }\right) ^{2} \\
&&+\frac{m_{\alpha }m_{\beta }}{m_{\alpha }+m_{\beta }}\left( \chi -\frac{%
m_{\alpha }-m_{\beta }}{2m_{\beta }}q\right) ^{2}\text{.}
\end{eqnarray*}%
Denote 
\begin{eqnarray*}
a &:&=-\frac{2m_{\beta }\chi +\left( 3m_{\alpha }+m_{\beta }\right) q}{%
2\left( m_{\alpha }+m_{\beta }\right) }\text{, }b:=\frac{\left( m_{\alpha
}-m_{\beta }\right) q-2m_{\beta }\chi }{2\left( m_{\alpha }+m_{\beta
}\right) }\text{, and } \\
c^{2} &:&=\dfrac{\left( m_{\alpha }-m_{\beta }\right) ^{2}}{4\left(
m_{\alpha }+m_{\beta }\right) ^{2}}\dfrac{m_{\alpha }}{m_{\beta }}q^{2}.
\end{eqnarray*}%
Then 
\begin{align*}
& m_{\alpha }\left\vert \boldsymbol{\xi }^{\prime }\right\vert ^{2}+m_{\beta
}\left\vert \boldsymbol{\xi }_{\ast }^{\prime }\right\vert ^{2}-\rho \left(
m_{\alpha }\left\vert \boldsymbol{\xi }\right\vert ^{2}+m_{\beta }\left\vert 
\boldsymbol{\xi }_{\ast }\right\vert ^{2}\right)  \\
\geq & \left( m_{\alpha }+m_{\beta }\right) \left( \left( r+a\right)
^{2}-\rho \left( r+b\right) ^{2}+\left( 1-\rho \right) c^{2}\right) +\left(
1+\rho \right) \frac{m_{\alpha }-m_{\beta }}{m_{\alpha }+m_{\beta }}%
m_{\alpha }\chi q \\
=& \left( m_{\alpha }+m_{\beta }\right) \left( 1-\rho \right) \left( r^{2}+2%
\frac{a-\rho b}{1-\rho }r+\frac{a^{2}-\rho b^{2}}{1-\rho }+c^{2}\right)
+\left( 1+\rho \right) \frac{m_{\alpha }-m_{\beta }}{m_{\alpha }+m_{\beta }}%
\Delta I_{kj,il}^{\alpha \beta } \\
=& \left( m_{\alpha }+m_{\beta }\right) \left( \left( 1-\rho \right) \left(
r+\frac{a-\rho b}{1-\rho }\right) ^{2}+\frac{c^{2}}{1-\rho }\left( \rho
^{2}-2\rho \left( 1+\frac{\left( a-b\right) ^{2}}{2c^{2}}\right) +1\right)
\right)  \\
& +\left( 1+\rho \right) \frac{m_{\alpha }-m_{\beta }}{m_{\alpha }+m_{\beta }%
}\Delta I_{kj,il}^{\alpha \beta } \\
\geq & c^{2}\frac{m_{\alpha }+m_{\beta }}{1-\rho }\left( \rho ^{2}-2\rho
\left( 1+\frac{\left( a-b\right) ^{2}}{2c^{2}}\right) +1\right) +\left(
1+\rho \right) \frac{m_{\alpha }-m_{\beta }}{m_{\alpha }+m_{\beta }}\Delta
I_{kj,il}^{\alpha \beta } \\
\geq & \left( 1+\rho \right) \frac{m_{\alpha }-m_{\beta }}{m_{\alpha
}+m_{\beta }}\Delta I_{kj,il}^{\alpha \beta }\geq -2\left\vert \Delta
I_{kj,il}^{\alpha \beta }\right\vert  \\
& \text{if }0\leq \rho \leq 1+\frac{\left( a-b\right) ^{2}}{2c^{2}}-\frac{1}{%
2c^{2}}\sqrt{\left( a-b\right) ^{4}+4c^{2}\left( a-b\right) ^{2}}<1\text{.}
\end{align*}%
Note that the second last inequality follows by the assumption on $\rho $.
Now let%
\begin{eqnarray*}
\rho  &=&1+\frac{\left( a-b\right) ^{2}}{2c^{2}}-\frac{1}{2c^{2}}\sqrt{%
\left( a-b\right) ^{4}+4c^{2}\left( a-b\right) ^{2}} \\
&=&1+\frac{1-\sqrt{1+\dfrac{4c^{2}}{\left( a-b\right) ^{2}}}}{\dfrac{2c^{2}}{%
\left( a-b\right) ^{2}}}=1-\frac{2}{1+\sqrt{1+\dfrac{4c^{2}}{\left(
a-b\right) ^{2}}}} \\
&=&1-\frac{2}{1+\sqrt{1+\dfrac{\left( m_{\alpha }-m_{\beta }\right) ^{2}}{%
4m_{\alpha }m_{\beta }}}}=\left( \frac{\sqrt{m_{\alpha }}-\sqrt{m_{\beta }}}{%
\sqrt{m_{\alpha }}+\sqrt{m_{\beta }}}\right) ^{2}>0\text{ if }m_{\alpha
}\neq m_{\beta }\text{.}
\end{eqnarray*}
\end{proof}

%\bibliography{biblo1}% common bib file
%% if required, the content of .bbl file can be included here once bbl is generated
%%\input sn-article.bbl

%% Default %%
%%\input sn-sample-bib.tex%

\end{document}